\numberwithin{equation}{section}
\theoremstyle{plain}
\newtheorem{thm}{Theorem}[section]
\newtheorem*{thm*}{Theorem}
\newtheorem{cor}[thm]{Corollary}
\newtheorem{lem}[thm]{Lemma}
\newtheorem{prop}[thm]{Proposition}
\theoremstyle{definition}
\newtheorem{defn}{Definition}[section]
\newtheorem{exam}{Example}[section]
\DeclareMathOperator{\USC}{USC}
\DeclareMathOperator{\LSC}{LSC}
\def\ga{\alpha}
\def\gb{\beta}
\def\gd{\delta}
\def\gk{\kappa} 
\def\gl{\lambda}
\def\gO{\Omega}
\def\gs{\sigma}
\def\gth{\theta}
\def\gz{\zeta}
\def\ep{\varepsilon}    
\def\gam{\gamma}
\def\tim{\times}
\def\sbs{\subset}
\def\bproof{\begin{proof}}\def\eproof{\end{proof}}
\def\beq{\begin{equation}}\def\eeq{\end{equation}}
\def\bcases{\begin{cases}}\def\ecases{\end{cases}}
\def\pl{\partial}
\def\ol{\overline}
\def\cfr{\cfrac}
\def\R{\mathbb R}
\def\N{\mathbb N}
\def\cF{\mathcal F}
\def\cI{\mathcal I}
\def\cS{\mathcal S}
\def\For{\text{ for }}
\def\In{\text{ in }}
\def\If{\text{ if }}
\def\On{\text{ on }}
\def\bye{\end{document}}
\def\0{\{ 0 \}}
\def\ON{\text{ on }}
\def\IN{\text{ in }}
\def\bald{\begin{aligned}}
\def\eald{\end{aligned}}
\def\map{\operatorname{ :\, }}
\def\dist{\operatorname{dist}}
\title{Averaging of Hamilton-Jacobi equations over Hamiltonian flows}
\author[H. Ishii]{Hitoshi Ishii$^*$}
\address[H. Ishii]{Institute for Mathematics and Computer Science, Tsuda  University,
 2-1-1 Tsuda, Kodaira, Tokyo 187-8577 Japan.}
\email{hitoshi.ishii@waseda.jp}
\thanks{${}^*$ Corresponding author}
\author[T. Kumagai]{Taiga Kumagai}
\address[T. Kumagai]{Academic Support Center, Kogakuin Univesity,
2665-1 Nakano-machi, Hachioji-shi, Tokyo 192-0015 Japan.}
\email{kumatai13@gmail.com}
\date{}
\keywords{Hamilton-Jacobi equations, averaging, Hamiltonian drifts, singular perturbations}
\subjclass[2010]{
35B25, 
35F21, 
35F30,  
49L25 
}
\begin{document}
\maketitle

\begin{abstract}
We study the asymptotic behavior of solutions to the Dirichlet problem for Hamilton-Jacobi equations with  large drift terms, where the drift terms are given by the 
Hamiltonian vector fields of Hamiltonian $H$. This is an attempt to understand the averaging effect for fully nonlinear degenerate elliptic equations. In this work, we restrict ourselves to the case of Hamilton-Jacobi equations. 
The second author has already established averaging results for Hamilton-Jacobi equations with convex Hamiltonians ($G$ below) under the classical formulation of the Dirichlet condition. Here we treat the Dirichlet condition in the viscosity sense, and establish an averaging result for Hamilton-Jacobi equations with relatively general Hamiltonian $G$.   
\end{abstract}

\tableofcontents


\advance \baselineskip by 3pt

\section{Introduction}

In this paper, we consider the Dirichlet problem for the Hamilton-Jacobi equation
\begin{align}  
\gl u^\ep - \frac{1}{\ep}\,b \cdot Du^\ep + G(x, Du^\ep) = 0 \ \ \  &\In \gO, \tag{$\mathrm{HJ}^\ep$} \label{E: epHJ} \\ 
u^\ep = g \ \ \  &\On \pl \gO. \tag{$\mathrm{BC}^\ep$} \label{E: epBC}
\end{align}
Here $\gl > 0$ and $\ep>0$ are constants, $\gO \sbs \R^2$ is an open and bounded set, $u^\ep : \ol\gO \to \R$ is the unknown function, and 
$G : \ol\gO \tim \R^2 \to \R$ and $g : \pl\gO \to \R$ are given functions. 

Our primary purpose is to investigate the behavior, as $\ep\to 0+$, of the solution $u^\ep$ of \eqref{E: epHJ} and \eqref{E: epBC}. 

\def\prob{\eqref{E: epHJ} and \eqref{E: epBC}}

In the problem \prob, our choice of the domain $\gO$ and the vector field $b$ features as follows:  
we are given a function $H : \R^2 \to \R$, called a Hamiltonian, that has 
the properties (H1)--(H3) described below. 
Let $N$ be an integer such that $N\geq 2$. 
Set $\cI_0 := \{ 0, 1, \ldots, N-1 \}$ and $\cI_1 := \{ 1, \ldots, N-1 \}$. 
 
\begin{itemize}
\item[(H1)] $H \in C^2(\R^2)$ and $\lim_{|z| \to \infty} H(z) = \infty$.

\item[(H2)] $H$ has exactly $N$ critical points $z_i \in \R^2$, with $i \in \cI_0$, and attains a local minimum at every $z_i$, with $i \in \cI_1$. 
Moreover $z_0 = 0$ and $H(0) = 0$. 

\item[(H3)] There exist constants $m \geq 0$, $n > 0$, $A_1 > 0$, $A_2 > 0$ and a neighborhood $V \sbs \R^2$ of $0$ such that $n < m+2$ and
\begin{equation*}
|H_{x_i x_j}(x)| \leq A_1|x|^m \ \ \  \text{ for all } x \in V \text{ and } i,j \in \{1, 2\}, 
\end{equation*}
and
\begin{equation*} 
A_2|x|^n \leq |DH(x)| \ \ \  \text{ for all } x \in V.
\end{equation*}
\end{itemize}

The geometry of $H$ are stated as follows (see also \cite{Ku17}).  The set 
$D_0 = \{ x \in \R^2 \mid H(x) > 0 \}$ is open and connected, 
and the open set $\{ x \in \R^2 \mid H(x) < 0 \}$ has exactly $N-1$ connected components $D_i$, with $i \in \cI_1$, such that $z_i \in D_i$ (see Figure 1).
Furthermore, it follows that $\pl D_0 := \{ x \in \R^2 \mid H(x) = 0 \}$,  
$\pl D_0 = \bigcup_{i \in \cI_1} \pl D_i$, and $\pl D_i \cap \pl D_j = \{ 0 \}$ if 
$i,j\in\cI_1$ and $i \not= j$.
\begin{figure}[h]
\centering
\input{domain.tex}
\caption{$N=6$}
\end{figure}

We choose $h_i \in \R$, with $i \in \cI_0$, so that
\begin{equation*}
h_0 > 0 \ \ \  \text{ and } \ \ \  H(z_i) < h_i < 0 \ \ \  \For i \in \cI_1,
\end{equation*}
and 
define 
\begin{equation*}
\gO_0 = \{ x \in D_0 \mid H(x) < h_0 \}, \ \ \  \ \ \  \gO_i = \{ x \in D_i \mid H(x) > h_i \} \ \ \  \For i \in \cI_1,
\end{equation*}
and
\begin{equation*}
\pl_i\gO = \{ x \in \ol\gO_i \mid H(x) = h_i \} \ \ \  \For i \in \cI_0.
\end{equation*}
Finally, the set $\gO$ is given by  
\begin{equation*}
\gO = \{ x \in \R^2 \mid H(x) = 0 \} \cup \bigcup_{i \in \cI_0} \gO_i,
\end{equation*}
and the drift vector $b : \R^2 \to \R^2$ is given by the Hamiltonian vector field of $H$, that is,
\begin{equation*} 
b = (H_{x_2}, -H_{x_1}).
\end{equation*}
Note that 
\begin{equation*}
\pl\gO = \bigcup_{i \in \cI_0} \pl_i\gO.
\end{equation*}

Our primary interest in this work is to generalize fully the averaging results obtained by Freidlin-Wentzell \cite{FW94} and Ishii-Souganidis \cite{IS12} for stochastic processes to those for controlled stochastic processes. 
The analysis of the averaging of stochastic processes can be phrased, in terms of partial differential equations, as the study of the asymptotic behavior of solutions to 
linear second-order elliptic partial differential equations, 
with the large Hamiltonian drift term $-b \cdot Du^\ep/\ep$, 
while for controlled stochastic processes, 
fully nonlinear second-order degenerate elliptic equations, of the form
\begin{equation} \label{E: FN}
\gl u^\ep - \frac{1}{\ep}\,b \cdot Du^\ep + G(x, Du^\ep, D^2u^\ep) = 0 \ \ \  \In \gO,
\end{equation}
take over the role of linear elliptic equations. 

However, by the technical reasons, we restrict ourselves to the case where the function $G$ of $(x,u,Du,D^2u)$ in \eqref{E: FN} does not depend on $D^2u$. That is, we treat here the first-order equation \eqref{E: epHJ}. In other words, we deal with 
deterministic control or differential games processes.  
The second author  has already studied the asymptotic problem for such deterministic processes by analyzing 
\eqref{E: epHJ} and \eqref{E: epBC}.

A crucial difference of this work from \cites{Ku16, Ku17} is that $G$ is not anymore convex so that the results cover the differential games processes.
Another critical point here is that we treat the Dirichlet boundary condition 
in the viscosity sense, which makes the statement of our results transparent.

There are two difficulties to be dealt with here beyond those in \cite{Ku16, Ku17}. 
One is that the optimal control interpretation is not available anymore of the problem, and the second is how to deal with the boundary layer and to determine 
the effective boundary data. 
The bottom line to solve these difficulties is that the perturbed Hamiltonian $-\ep^{-1}b(x) \cdot p + G(x, p)$ is coercive in the direction of $DH(x)$ although it is not coercive in the other directions when $\ep$ is very small.

Our result is stated in Theorem \ref{T: main}, which claims that the effective problem is identified with the Dirichlet problem for a Hamilton-Jacobi equation on a graph.
Indeed, the large Hamiltonian drift term, as $\ep \to 0+$, makes $u^\ep$ nearly constant along the level sets of $H$.
If we identify every $h$-level set of $H$ in $\gO_i$ with a point $h$ in the intervals
\begin{equation*}
J_0 = (0, \, h_0) \ \ \  \text{ and } \ \ \  J_i = (h_i, \, 0) \ \ \  \For i \in \cI_1
\end{equation*}
and the zero level set of $H$ with point $0$ connecting all the intervals $J_i$, then we obtain a graph consisting of one node $0$ and $N$ edges $J_i$.
These suggest that the limit problem should be posed naturally and effectively on the graph. 

Various definitions of viscosity solutions on graphs have been introduced in the literature, and we refer for these to \cite{ACCT13, IMZ13, IM17, LS16, LS17}, although those cannot be adopted to our effective problem. Our effective Hamiltonians in the edges are not well-defined  at the node and their coercivities break down near the node. 
In our result, we identify the limit function of $u^\ep$ with a maximal continuous viscosity solution of the effective problem posed on the graph. 
We also refer to \cite{AT15, LS16, GIM15} for asymptotic problems related to ours, in which Hamilton-Jacobi equations on graphs appear as effective problems.

This paper is organized as follows.
In the next section, we give some assumptions on $G$ and a basic existence result for \eqref{E: epHJ} and \eqref{E: epBC} as well as a typical example of $G$ satisfying the assumptions. 
In Section \ref{S: main result}, we present the main results. Section \ref{S: effective prob}
makes fundamental observations concerning the effective problem in the edges. 
Section \ref{S: proof main} outlines the proof of the main theorem based on three propositions 
and proves one of these propositions. The other two propositions are shown 
in Sections \ref{S: visco} and \ref{S: maximal}, respectively. In the appendix a basic proposition is 
presented together with its proof. 
\medskip

\noindent{\bf Notation:} For a function $f\map X\to \R^m$, we write 
\ $\|f\|_{\infty}=\|f\|_{\infty,X}:=\sup\{|f(x)|\mid x\in X\}$.  For $r_1,\,r_2\in\R$, we write $r_1\wedge r_2:=\min\{r_1,\,r_2\}$ and $r_1\vee r_2:=\max\{r_1,\,r_2\}$. 

\section*{Acknowledgements} The work of the first author was partially supported by the JSPS grants: KAKENHI \#16H03948, \#18H00833. He also thanks to the Department of Mathematics at the Sapienza University of Rome,  the School of Mathematical Sciences at Fudan University, the Department of 
Mathematics at the Pontifical Catholic University of Rio de Janeiro,  
for their financial support and warm hospitality, while his visits. 
The work of the second author was mostly done while he was a member of the Faculty of Education and Integrated Arts and Science at Waseda University (April 2017--May 2018) and of the Department of Mathematics at Tokyo Institute Technology (June 2018--November 2019).

\section{The problem \eqref{E: epHJ} and \eqref{E: epBC}} 

This section concerns the problem \eqref{E: epHJ} and \eqref{E: epBC}.
We set
\begin{equation*}
\bar h = \min_{i \in \cI_0} |h_i| \ \ \  \text{ and } \ \ \  \gO(s) = \{ x \in \R^2 \mid |H(x)| < s \} \ \ \  \For s \in (0, \, \bar h), 
\end{equation*}
and denote the closure of $\gO(s)$ by $\ol \gO(s)$.

We need the following assumptions.

\begin{itemize}
\item[(G1)] $G \in C(\ol\gO \tim \R^2)$ and $g \in C(\pl\gO)$. 

\item[(G2)] There exists a continuous nondecreasing function $m_1 : [0, \, \infty) \to [0, \, \infty)$ satisfying $m_1(0) = 0$ such that
\begin{equation*}
|G(x, p) - G(y, p)| \leq m_1(|x-y|(1+|p|)) \ \ \  \text{for all $x, y \in \ol\gO$ \ and \ $p \in \R^2$.}
\end{equation*}

\item[(G3)]\label{item:G3}There exists a continuous nondecreasing function $m_2 : [0, \, \infty) \to [0, \, \infty)$ satisfying $m_2(0) = 0$ such that
\begin{equation*}
|G(x, p) - G(x, q)| \leq m_2(|p-q|)\ \ \  \text{ for all $x \in \ol\gO$ \ and \ $p, q \in \R^2$.}
\end{equation*}

\item[(G4)] There exists $\gam \in (0, \, \bar h)$ such that, for each $x \in \gO(\gam) \setminus c_0(0)$, the function $\R \ni q \mapsto G(x, qDH(x))$ is convex.

\item[(G5)] There exist $\nu > 0$ and $M > 0$ such that
\begin{equation*}
G(x, p) \geq \nu |p| - M \ \ \  \text{ for all } (x, p) \in \ol\gO \tim \R^2.
\end{equation*}
\end{itemize}

As already mentioned in the introduction, in this paper, we deal with solutions satisfying Dirichlet boundary conditions in the sense of viscosity solutions.
We now recall the definition (see e.g. \cite{CIL92, I89})  of viscosity solutions to \eqref{E: epHJ} as well as those to \eqref{E: epHJ} and \eqref{E: epBC}. 

In what follows, we always assume (G1).

\begin{defn}
A function $u : \gO \to \R$ is called a viscosity subsolution {\rm (}resp., supersolution{\rm )} of \eqref{E: epHJ} if $u$ is locally bounded in $\gO$ and, for any $\phi \in C^1(\gO)$ and $z \in \gO$ such that $u^*-\phi$ attains a local maximum {\rm (}resp., $u_*-\phi$ attains a local minimum{\rm )} at $z$,
\begin{gather*}
\gl u^*(z) - \ep^{-1}b(z) \cdot D\phi(z) + G(z, D\phi(z)) \leq 0 \\
(\text{resp., } \gl u_*(z) - \ep^{-1}b(z) \cdot D\phi(z) + G(z, D\phi(z)) \geq 0),
\end{gather*}
where $u^*$ and $u_*$ denote, respectively, the upper and lower semicontinuous envelope of $u$.
A function $u : \gO \to \R$ is called a viscosity solution of \eqref{E: epHJ} if $u$ is both a viscosity sub- and supersolution of \eqref{E: epHJ}.
\end{defn}

\begin{defn}
A function $u : \ol\gO \to \R$ is called a viscosity subsolution {\rm (}resp., supersolution{\rm )} of \eqref{E: epHJ} and \eqref{E: epBC} if $u$ is bounded on $\ol\gO$ and the following two conditions {\rm (i), (ii)} hold{\rm :} {\rm (i)} $u$ is a viscosity subsolution {\rm (}resp., supersolution{\rm )} of \eqref{E: epHJ}, {\rm (ii)} for any $\phi \in C^1(\ol\gO)$ and $z \in \pl\gO$ such that $u^*-\phi$ attains a local maximum {\rm (}resp., $u_*-\phi$ attains a local minimum{\rm )} at $z$,
\begin{gather*}
\min \{ \gl u^*(z) - \ep^{-1}b(z) \cdot D\phi(z) + G(z, D\phi(z)), u^*(z) - g(z) \} \leq 0 \\
(\text{resp., } \max \{ \gl u_*(z) - \ep^{-1}b(z) \cdot D\phi(z) + G(z, D\phi(z)), u_*(z) - g(z) \} \geq 0).
\end{gather*}
A function $u : \ol\gO \to \R$ is called a viscosity solution of \eqref{E: epHJ} and \eqref{E: epBC} if $u$ is both a viscosity sub- and supersolution of \eqref{E: epHJ} and \eqref{E: epBC}.
\end{defn}

Let $\cS_\ep$ (resp., $\cS_\ep^-$) denote the set of all viscosity solutions 
(resp., subsolutions) of (HJ$^\ep$) and (BC$^\ep$).

\begin{prop} \label{P: EB}
For each $\ep > 0$, there exists a viscosity solution $u^\ep$ of \eqref{E: epHJ} and \eqref{E: epBC}, that is, $\cS_\ep\not=\emptyset$. 
Furthermore, the set $\bigcup_{\ep>0} \cS_\ep$ 
is uniformly bounded on $\ol\gO$.   
\end{prop}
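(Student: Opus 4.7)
The plan is to construct constant barriers that are independent of $\ep$, use Perron's method to produce a solution between them, and then apply the comparison principle to derive the uniform bound on $\bigcup_{\ep > 0}\cS_\ep$. The key observation making the barriers $\ep$-free is that the large drift term $\ep^{-1}b\cdot D\phi$ vanishes identically when $\phi$ is constant (since then $D\phi \equiv 0$), so the coercive parameter $\ep$ plays no role in verifying the sub/supersolution conditions for constant test functions.

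Concretely, I would set $M_0 := \|G(\cdot, 0)\|_{\infty, \ol\gO}$ and $M_g := \|g\|_{\infty, \pl\gO}$, both finite by (G1) and compactness, and put $K := (M_0 + \gl M_g)/\gl$. To see that $w \equiv K$ is a viscosity supersolution of \eqref{E: epHJ} and \eqref{E: epBC}: for any $\phi \in C^1(\ol\gO)$ with $w - \phi$ attaining a local minimum at any $z \in \ol\gO$ we necessarily have $D\phi(z) = 0$, so the interior PDE test reduces to $\gl K + G(z, 0) \geq 0$, which holds by choice of $K$; the required max-condition on $\pl\gO$ is then immediate from the same inequality. A symmetric verification shows that $v \equiv -K$ is a viscosity subsolution. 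Crucially, $K$ depends only on $\gl$, $\|G(\cdot,0)\|_\infty$ and $\|g\|_\infty$, not on $\ep$.

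With these ordered barriers in hand, I would invoke Perron's method in the form of \cite{I89}, setting
\[
u^\ep(x) := \sup\{v(x) : v \in \cS_\ep^-,\ -K \leq v \leq K \text{ on } \ol\gO\}.
\]
Standard arguments yield that $(u^\ep)^*$ is a viscosity subsolution of \eqref{E: epHJ} and \eqref{E: epBC}, while $(u^\ep)_*$ is a supersolution via the usual bump-perturbation argument contradicting maximality at any point where the supersolution condition would fail. Hence $\cS_\ep \ne \emptyset$. The uniform bound on $\bigcup_{\ep > 0}\cS_\ep$ then follows from the comparison principle for the Dirichlet problem in the viscosity sense, available under (G1)--(G3) together with the strict monotonicity from $\gl > 0$: applied to the pairs $(u, K)$ and $(-K, u)$ for any $u \in \cS_\ep$ it yields $|u| \leq K$ on $\ol\gO$.

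The main technical point is ensuring that the Perron construction respects the boundary condition in the viscosity sense --- delicate but standard, as treated in \cite{I89} and \cite{CIL92}. I would also note that neither the coercivity (G5) nor the direction-convexity (G4) of $G$ plays any role in this proposition; they will enter only when analyzing the asymptotic behavior of $u^\ep$ as $\ep \to 0+$ and when identifying the effective problem on the graph.
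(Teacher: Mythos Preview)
Your approach is essentially the paper's: constant barriers plus Perron's method, with the key observation that the drift term vanishes on constants. The paper, however, derives the uniform bound on $\bigcup_{\ep>0}\cS_\ep$ by a direct maximum-point argument rather than by invoking comparison: given $v\in\cS_\ep$, it looks at a maximum point $\hat x$ of $v^*$, tests with a constant, and reads off $\gl v^*(\hat x)+G(\hat x,0)\le 0$ or $v^*(\hat x)\le g(\hat x)$, depending on whether $\hat x\in\gO$ or $\hat x\in\pl\gO$. This is more self-contained, and it sidesteps a point you glossed over: the comparison principle for the viscosity Dirichlet problem does \emph{not} hold unconditionally here (the paper's own Example~2.1 exhibits non-uniqueness), so to make your comparison step rigorous you must use the version in which one of the two functions is continuous at $\pl\gO$ and satisfies the boundary inequality classically --- which your constants $\pm K$ do, since $K\ge g$ and $-K\le g$. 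You should say this explicitly rather than appealing to ``the comparison principle under (G1)--(G3)''.

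One minor correction: your claim that $D\phi(z)=0$ whenever $w-\phi$ has a local minimum at $z\in\ol\gO$ is false for $z\in\pl\gO$ (the gradient of $\phi$ can point outward). This does not damage the argument, since at boundary points the supersolution condition follows directly from $K\ge g$ via the second alternative in the $\max$; but your sentence ``immediate from the same inequality'' misidentifies the reason.
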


\def\x{\hat x}

\begin{proof}  Fix any $\ep>0$. We choose a constant $C>0$ so that 
\[
\max_{x\in\ol\gO}|G(x,0)|\leq \gl C \ \ \text{ and } \ \ \max_{x\in\pl\gO}|g(x)|\leq C, 
\]
and observe that $C$ and $-C$ are, respectively, a viscosity super- and subsolution of (HJ$^\ep)$ and (BC$^\ep$). Set 
\[
u^\ep(x)=\sup\{v(x)\mid v\in\cS_\ep^-,\ |v|\leq C \ \ON \ol\gO\} \ \ \text{ for } x\in\ol\gO,
\] 
and conclude by \cite{I87} that $u^\ep$ is a viscosity solution of (HJ$^\ep$) and (BC$^\ep$). 
Thus, $\cS_\ep\not=\emptyset$. 

Next, let $\ep>0$ and $v\in\cS_\ep$. Let 
$\x\in\ol\gO$ be a maximum point of $v^*$. 
If $x\in\gO$, then we have 
\[
\gl v^*(\x)+G(\x,0)\leq 0. 
\]
If, otherwise, $\x\in\pl\gO$, then, either, 
\[
\gl v^*(\x)+G(\x,0)\leq 0 \ \ \text{ or } v^*(\x)\leq g(\x).
\]
Hence, we get
\[
\sup_{\ol\gO}v=v^*(\x)\leq \max\{-\gl^{-1}\max_{x\in\ol\gO}G(x,0),\, \max_{\pl\gO}g\}. 
\]
Similarly, we obtain
\[
\min_{\ol\gO}v_*\geq \min\{-\gl \min_{x\in\ol\gO}G(x,0),\,\min_{\pl\gO}g\}. 
\]
Thus, we have 
\[
\sup_{\ol\gO}|v|\leq\max\{\gl^{-1}\max_{x\in\ol\gO}|G(x,0)|,\,\max_{\pl\gO}|g|\}, 
\]
which shows that $\bigcup_{\ep>0}\cS_\ep$ is uniformly bounded on $\ol\gO$. 
\end{proof}

The following example shows that, in general, viscosity solutions of \eqref{E: epHJ} and \eqref{E: epBC} do not satisfy the Dirichlet condition in the classical sense. 
Moreover, the uniqueness of the viscosity solutions of \eqref{E: epHJ} and \eqref{E: epBC} does not hold. 

\begin{exam}
Let $G$ and $g$ be the functions defined by $G(x, p) = |p|$ for $(x, p) \in \ol\gO \tim \R^2$ and $g(x) = 1$ for $x \in \pl\gO$, respectively. 
Then $u(x) \equiv 0$ is a viscosity solution of \eqref{E: epHJ} and \eqref{E: epBC}.
However it does not satisfy $u = 1$ on $\pl\gO$. 
If we set $v(x)=0$ for $x\in\gO$ and $v(x)=1$ for $x\in\pl\gO$, then the function $v$ is another viscosity solution of \eqref{E: epHJ} and \eqref{E: epBC}. 
\end{exam}

The following comparison theorem is a direct consequence of \cite[Theorem 2.1]{I89}. 

\begin{prop}\label{P: comp_ep} Assume \emph{(G1)--(G3)}. Let $u$ and $v$ be a viscosity sub- and supersolution of \eqref{E: epHJ} and \eqref{E: epBC}, respectively. 
If both $u$ or $v$ are continuous at the points of $\pl\gO$, then 
$u\leq v$ on $\ol\gO$. Also, if $u$ (resp., $v$) is continuous at the points of $\pl\gO$ and $u\leq g$ (resp., $v\geq g$) on $\pl\gO$, then $u\leq v$ on  $\ol\gO$. 
\end{prop}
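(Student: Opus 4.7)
The plan is to reduce both assertions to the general comparison theorem \cite[Theorem 2.1]{I89} for first-order Hamilton--Jacobi equations with Dirichlet conditions interpreted in the viscosity sense, by verifying its hypotheses on the Hamiltonian
$$F^\ep(x,r,p):=\gl\, r-\ep^{-1}b(x)\cdot p+G(x,p),$$
so that \eqref{E: epHJ} reads $F^\ep(x,u^\ep,Du^\ep)=0$. Properness in $r$ is $\pl_r F^\ep\equiv\gl>0$; continuity in $p$ uniformly in $x$ is the modulus $m_2$ from (G3); for the joint $(x,p)$-modulus I would note that $H\in C^2(\R^2)$ together with compactness of $\ol\gO$ forces $b=(H_{x_2},-H_{x_1})$ to be Lipschitz on $\ol\gO$, say with constant $L$, so (G2) yields
$$|F^\ep(x,r,p)-F^\ep(y,r,p)|\leq \ep^{-1}L\,|x-y|\,|p|+m_1\bigl(|x-y|(1+|p|)\bigr),$$
whose right-hand side is a genuine modulus of $|x-y|(1+|p|)$ for each fixed $\ep>0$.

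With these structural conditions verified, and with the Dirichlet datum $g$ read in the viscosity sense exactly as in \cite{I89}, the first assertion is the direct output of \cite[Theorem 2.1]{I89}: the continuity of $u$ or $v$ at $\pl\gO$ is precisely the extra hypothesis that theorem requires in order to conclude $u\leq v$ on $\ol\gO$. For the second assertion, suppose $u$ is continuous at $\pl\gO$ and $u\leq g$ on $\pl\gO$ classically (the case of $v$ being symmetric). Then at any boundary test point $z$ where $u-\phi$ has a local maximum, the alternative $u(z)-g(z)\leq 0$ in the viscosity boundary inequality is automatically realized, so $u$ qualifies as a viscosity subsolution whose Dirichlet datum is achieved classically; comparing it against the viscosity supersolution $v$ via the mixed classical/viscosity version of \cite[Theorem 2.1]{I89} then gives $u\leq v$ on $\ol\gO$.

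The entire effort is administrative: lining up our framework with that of \cite{I89} and checking that the three hypotheses just stated meet its requirements. I do not anticipate any substantive obstacle — consistent with the authors' own assessment that the result is a direct consequence of the cited theorem — the only minor point of care being that the factor $\ep^{-1}L$ in the modulus above depends on $\ep$, which is harmless since $\ep>0$ is fixed throughout the proposition.
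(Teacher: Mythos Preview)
Your proposal is correct and matches the paper's approach exactly: the paper gives no argument beyond stating that the result is a direct consequence of \cite[Theorem 2.1]{I89}, and you have simply filled in the routine verification of its hypotheses. The only content you add beyond the paper is the explicit check of the structural conditions on $F^\ep$, all of which are valid as written.
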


We remark here that assumption (G5) does not ensure that 
$-\ep^{-1}b(x) \cdot p + G(x, p)$ is coercive when $\ep > 0$ is very small.
Assumption (G4) is assumed for technical reasons, and we do not know if such a 
convexity assumption on $G$ is needed or not to get the convergence result in our main theorem.

\begin{exam}
Consider the function $G$ defined by
\begin{equation*}
G(x, p) = \gth |p| - |p \cdot b(x)| - f(x) \ \ \  \For (x, p) \in \ol\gO \tim \R^2,
\end{equation*}
where $f \in C(\ol\gO)$ and $\gth > 0$ is chosen so that $\gth > \|DH\|_{\infty,\ol\gO}$.
It is easy to check that $G$ satisfies (G1)--(G5) and that, if $x\not=0$, $G(x, \cdot)$ is not convex.
\end{exam}

\section{Main result} \label{S: main result}
For $i\in\cI_0$, we set 
\begin{equation*}
c_i(h) = \{ x \in \ol\gO_i \mid H(x) = h \} \ \ \  \For h \in \bar J_i,
\end{equation*}
and define the function $\ol G_i\map (\bar J_i\setminus\{0\}) \tim \R\to \R$ by
\begin{equation*}
\ol G_i (h, q) = \cfr{1}{T_i (h)} \int_{\{ x \in \ol\gO_i \mid H(x) = h \}} \frac{G(x, qDH(x))}{|DH(x)|} \, dl,
\end{equation*}
where 
\begin{equation*}
T_i(h) = \int_{\{ x \in \ol\gO_i \mid H(x) = h \}} \frac{1}{|DH(x)|} \, dl
\end{equation*}
and $dl$ denotes the line element. 
We call the functions $\ol G_i$ the effective 
Hamiltonians.

Our main result, Theorem \ref{T: main} below, claims that the limit function of $u^\ep$, as $\ep \to 0+$, is characterized by the maximal viscosity solution $(u_0, u_1, \ldots, u_{N-1})$ to
\begin{equation} \tag{HJ}\label{HJ}
\left\{
\begin{minipage}{0.6\textwidth} 
\begin{align*} 
(\mathrm{HJ}_i)&& \hspace{.2\textwidth} \gl u_i + \ol G_i (h, u_i') &= 0 \ \ \  \In J_i,
\hspace{0.2\textwidth}\\
(\mathrm{BC}_i)&& u_i(h_i) &= \min_{\pl_i \gO} g,  \\
(\mathrm{NC})&& u_0(0) = u_1(0) &= \cdots = u_{N-1}(0), 
\end{align*}
\end{minipage}\right.
\end{equation} 

We recall the definition of viscosity solution of (HJ$_i$) and (BC$_i$). 

\begin{defn}
A function $u : J_i \to \R$ is called a viscosity subsolution {\rm (}resp., supersolution{\rm )} of (HJ$_i$) if $u$ is locally bounded in $J_i$ and, for any $\phi \in C^1(J_i)$ and $z \in J_i$ such that $u^*-\phi$ attains a local maximum {\rm (}resp., $u_*-\phi$ attains a local minimum{\rm )} at $z$,
\begin{gather*}
\gl u^*(z) + \ol G_i(z, \phi'(z)) \leq 0 \ \ \  (\text{resp., } \gl u_*(z) + \ol G_i(z, \phi'(z)) \geq 0).
\end{gather*}
A function $u : J_i \to \R$ is called a viscosity solution of (HJ$_i$) if $u$ is both a viscosity sub- and supersolution of (HJ$_i$).
\end{defn}

\begin{defn}
A function $u : \bar J_i \setminus \0 \to \R$ is called a viscosity subsolution {\rm (}resp., supersolution{\rm )} of (HJ$_i$) and (BC$_i$) if $u$ is locally 
bounded in $\bar J_i\setminus \{0\}$ and the following two conditions hold: 
{\rm (i)} $u$ is a viscosity subsolution {\rm (}resp., supersolution{\rm )} of (HJ$_i$), {\rm (ii)} for any $\phi \in C^1(\bar J_i \setminus \0)$ such that $u^*-\phi$ attains a local maximum {\rm (}resp., $u_*-\phi$ attains a local minimum{\rm )} at $h_i$,
\begin{gather*}
\min \{ \gl u^*(h_i) + \ol G_i(h_i, \phi'(h_i)), u^*(h_i) - \min_{\pl_i\gO}g \} \leq 0 \\
(\text{resp., } \max \{ \gl u_*(h_i) + \ol G_i(h_i, \phi'(h_i)), u_*(h_i) - g(h_i) \} \geq 0).
\end{gather*}
A function $u : \bar J_i \setminus \0 \to \R$ is called a viscosity solution of (HJ$_i$) and (BC$_i$) if $u$ is both a viscosity sub- and supersolution of (HJ$_i$) and (BC$_i$).
\end{defn}

We give the definition of (maximal) viscosity solutions of (HJ).

\begin{defn}
We say that $(u_0, u_1, \ldots, u_{N-1}) \in \prod_{i\in\cI_0}C(\bar J_i)$ is a viscosity solution (resp., subsolution) of {\rm (HJ)} if (NC) holds and, for each $i \in \cI_0$, $u_i$ is a viscosity solution (resp., subsolution) 
of (HJ$_i$) and (BC$_i$). Also, we say that $(u_0, u_1, \ldots, u_{N-1})$ is a maximal viscosity solution of {\rm (HJ)} provided it is a viscosity solution of {\rm (HJ)} and that, if $(v_0, v_1, \ldots, v_{N-1})$ is a viscosity solution of {\rm (HJ)}, then $u_i \geq v_i$ on $\bar J_i$ for all $i \in \cI_0$.
\end{defn}

We write $\cS$ (resp., $\cS^-$) for the set of all viscosity solutions (resp., subsolutions) $(u_0,\ldots,u_{N-1})\in \prod_{i\in\cI_0}C(\bar J_i)$ of (HJ). 

For any viscosity solution $(u_0,\ldots,u_{N-1})$ of \eqref{HJ}, we write 
\[
d(u_0,\ldots,u_{N-1}):=u_0(0)=\cdots=u_{N-1}(0). 
\]

It is clear that a maximal viscosity solution defined above is unique if it exists.

The main result in this paper is stated as follows.

\begin{thm} \label{T: main}
Assume that {\rm (G1)--(G5)} hold. \emph{(i)}\ There exists a maximal viscosity solution $(u_0,\ldots,u_{N-1})$ of {\rm(HJ)}. \ \emph{(ii)} \  
Define the function $u \in C(\ol\gO)$ by
\begin{equation*}
u(x) = u_i \circ H(x) \ \ \  \For x \in \ol\gO_i \text{ and } i \in \cI_0. 
\end{equation*}
Then the set $\cS_\ep$ converges to the function $u$ as $\ep\to 0+$ 
in the sense that for any compact subset $K$ of $\gO$,
\[
\lim_{\ep\to 0+}\sup \{\|v-u\|_{\infty,K}\mid v\in\cS_\ep\}=0. 
\]  
\end{thm}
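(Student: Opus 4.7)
The plan is to apply the half-relaxed-limits method of Barles--Perthame, adapted to the averaging setting and to the graph structure of the effective problem. By Proposition \ref{P: EB} the family $\bigcup_{\ep>0}\cS_\ep$ is uniformly bounded on $\ol\gO$, so the usc/lsc relaxed limits
\[
\ol u(x) = \limsup_{(\ep,y)\to(0+,x)}\sup\{v(y): v\in\cS_\ep\}, \qquad \underline u(x)=\liminf_{(\ep,y)\to(0+,x)}\inf\{v(y):v\in\cS_\ep\}
\]
are well-defined bounded functions on $\ol\gO$ with $\underline u\leq \ol u$. The aim is to identify both with the function $u$ from (ii), which will give local uniform convergence on compact subsets of $\gO$.

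The first main step is to show that $\ol u$ and $\underline u$ are constant along the Hamiltonian orbits inside each $\gO_i$, so that one may write $\ol u(x)=\ol u_i(H(x))$ and $\underline u(x)=\underline u_i(H(x))$ for usc $\ol u_i$ and lsc $\underline u_i$ on $\bar J_i$. This is forced by the viscosity inequality: a test function whose gradient has a nontrivial component transverse to $DH$ makes the term $-\ep^{-1}b\cdot D\phi$ blow up in the wrong direction. Next, one would test \eqref{E: epHJ} with $\phi_\ep(x)=\psi(H(x))+\ep w_\psi(x)$, where $w_\psi$ is a corrector on the periodic orbit $c_i(h)$ killing the transport term at leading order. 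Integrating the inequality against the invariant line measure $|DH|^{-1}dl$ on $c_i(h)$ and using divergence-freeness of $b$ erases the corrector contribution and leaves precisely $\gl\psi(h)+\ol G_i(h,\psi'(h))$. The convexity in (G4) and the coercivity (G5) ensure the averaging yields the correct one-sided inequality and give enough control to pass to the limit. Together with barrier arguments for $(\mathrm{BC}_i)$ using $g$ on $\pl_i\gO$, this shows that $(\ol u_0,\ldots,\ol u_{N-1})\in\cS^-$ and that $(\underline u_0,\ldots,\underline u_{N-1})$ is a supersolution-family of (HJ). The node condition (NC) is inherited for free: $\ol u$ and $\underline u$ are single-valued on $\ol\gO$ and the level set $\{H=0\}$ lies in the closure of every $\gO_i$, so all values $\ol u_i(0)$, respectively $\underline u_i(0)$, coincide.

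Having put the relaxed limits into the sub/super slot, the next step is to construct a maximal viscosity solution of (HJ) by a Perron-type argument on the graph (the content of Section \ref{S: maximal}): set $U_i(h) = \sup\{v_i(h) : (v_0,\ldots,v_{N-1})\in\cS^-\}$, verify by sup-closure that the tuple is a subsolution satisfying (NC), and promote it to a solution by the standard Perron bump construction carried out edge-by-edge with care at the node; this proves (i). Since $(\ol u_0,\ldots,\ol u_{N-1})\in\cS^-$, maximality gives $\ol u_i\leq U_i$ on $\bar J_i$. The reverse inequality $U_i \leq \underline u_i$, which closes the loop, is obtained by lifting $U_i\circ H$ back to $\ol\gO$ as a near-subsolution of \eqref{E: epHJ} up to $o(1)$ error (by reversing the corrector construction) and then invoking the comparison Proposition \ref{P: comp_ep} against $u^\ep\in\cS_\ep$, using that on $\pl_i\gO$ the lifted boundary value $U_i(h_i)=\min_{\pl_i\gO}g\leq g$ satisfies the one-sided comparison. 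Passing to $\ep\to 0+$ yields $U_i\leq \underline u_i$, hence $\ol u = \underline u = U\circ H =: u$, which is (ii).

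I expect the technical core to lie in the averaging step near the saddle point $0$: the Hamiltonian orbits degenerate there since $b(0)=0$, and the corrector $w_\psi$ cannot be defined uniformly up to the node, producing a genuine boundary layer at $h=0$. Controlling it requires the polynomial bounds in (H3) on $DH$ and $D^2H$ together with the convexity (G4) of $q\mapsto G(x,qDH(x))$ in a neighborhood of the critical set, so that contributions from near-node orbits can be dominated and passed to the limit. The second delicate point is the identification of $U_i$ with $\underline u_i$ at $h=0$, which has to be handled without a direct comparison principle on the graph, because the effective Hamiltonians $\ol G_i$ are not even defined at $h=0$ and need not be coercive as $h\to 0$; this is why maximality, rather than uniqueness, is the right framework for the limit problem.
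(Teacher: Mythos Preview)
Your overall architecture (half-relaxed limits, constancy on orbits, perturbed test functions to get sub/supersolution of $(\mathrm{HJ}_i)$, Perron for (i), then squeeze) matches the paper. But two steps are genuine gaps, not technicalities.

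First, the node condition (NC) is \emph{not} inherited for free. The constancy of $\ol u$ (and $\underline u$) on level sets $c_i(h)$ comes from the transport equation $-b\cdot Du=0$ and the periodicity of the Hamiltonian flow on $c_i(h)$; this argument works only for $h\in J_i$, i.e.\ $h\neq 0$, because on $c_0(0)$ the flow hits the equilibrium $b(0)=0$ and the period is infinite. So a priori $\ol u$ need not be constant on $c_0(0)$, and even if you set $\ol u_i(0):=\max_{c_i(0)}\ol u$, there is no reason the edge limits $\lim_{J_i\ni h\to 0}\ol u_i(h)$ should all agree, nor equal that value. The paper spends all of Section~\ref{S: maximal} (Lemmas~\ref{L: max1}--\ref{L: max4}, Corollary~\ref{C: max3}) precisely on this: comparing $\max_{c_0(0)}v^+$, $\inf_{c_i(0)}v^+$, and the edge limits $d(v_i^+)$ via carefully designed barriers and trajectories that cross the separatrix, using the quantitative lower bound $|DH(x)|\geq A_0|H(x)|^\ga$ derived from (H3). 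Without this, you cannot place $(\ol u_0,\ldots,\ol u_{N-1})$ in $\cS^-$ and invoke maximality.

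Second, your route for the reverse inequality $U_i\leq \underline u_i$ --- lifting $U_i\circ H$ plus a corrector to a global near-subsolution of \eqref{E: epHJ} and applying Proposition~\ref{P: comp_ep} --- is not what the paper does and does not go through as stated. The corrector of Lemma~\ref{L: exist1} is built for a \emph{fixed} $q$ on a compact interval $[\ga,\gb]\subset \bar J_i\setminus\{0\}$; patching these into a global object valid up to $h=0$ is exactly the boundary layer you flag, and there is no mechanism here to control it. The paper avoids this entirely: once (NC) for $v_i^+$ is established, maximality gives $v_i^+(0)\leq d$; the inequality $v_i^-(0)\geq d$ is then proved by contradiction (proof of Theorem~\ref{T: Cl2}), building a \emph{local} subsolution $z^\ep=a+\ep\psi$ only on a small neighborhood $\gO(\gd)$ of the separatrix via Lemma~\ref{L: key}, and comparing there. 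With $v_i^+(0)=v_i^-(0)=d=u_i(0)$ in hand, the squeeze is closed by the edge-wise comparison Lemma~\ref{L: limCP}, not by a global lift.
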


The proof of this theorem is presented in Section \ref{S: proof main}.

\section{Effective problem (HJ$_i$) and (BC$_i$) in the edge $J_i$} 
\label{S: effective prob}

Hereafter, we always assume (G1)--(G5).
We study here some properties of the effective Hamiltonians $\ol G_i$ and the functions $T_i$ as well as viscosity subsolutions of the effective problem 
(HJ$_i$) and (BC$_i$) in the edge $J_i$.

\begin{lem} \label{L: PT}
Let $i \in \cI_0$. 
\begin{itemize}
\item[(i)] $T_i \in C^1 (\bar J_i \setminus \0)$.
\item[(ii)]
$T_i (h) = O(|h|^{-\frac{n}{m+2}})$ as $J_i \ni h \to 0$.
\end{itemize}
\end{lem}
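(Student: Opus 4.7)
For (i), the plan is to exploit the Hamiltonian flow $\phi^t$ generated by $b$. For $h \in \bar J_i \setminus \{0\}$, the set $\{H = h\} \cap \ol\gO_i$ avoids every critical point of $H$: the interior local minimum $z_i$ satisfies $H(z_i) < h_i$, and $0$ is excluded because $H(0) = 0 \notin \bar J_i \setminus \{0\}$. Hence $DH \neq 0$ on this set, which is consequently a single $C^2$ closed orbit of $b$, and $T_i(h)$ is precisely its period. To verify $C^1$ regularity I fix $h_* \in \bar J_i \setminus \{0\}$, pick $x_* \in \{H = h_*\} \cap \gO_i$, and choose a short $C^2$ transversal section $\Sigma$ through $x_*$ on which $DH$ is transverse. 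The implicit function theorem applied to $H|_\Sigma$ yields a $C^1$ map $h \mapsto x(h) \in \Sigma$ with $H(x(h)) = h$ near $h_*$. Since $b \in C^1$, the flow is $C^1$ in $(t,x)$, so a second application of the implicit function theorem to the return equation $(\phi^t(x(h)) - x(h)) \cdot \mathbf{n} = 0$, where $\mathbf{n} \perp \Sigma$, at the point $(T_i(h_*),h_*)$ gives $T_i \in C^1$ near $h_*$, since the $t$-derivative equals $b(x_*) \cdot \mathbf{n} \neq 0$ by transversality.

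For (ii), the plan is to split the integral at the boundary of the neighborhood $V$ of $0$ given in (H3):
\[
T_i(h) = \int_{\{H=h\} \cap V} \frac{dl}{|DH|} + \int_{\{H=h\} \cap (\ol\gO_i \setminus V)} \frac{dl}{|DH|}.
\]
For the outer piece, shrink $V$ so that $\ol\gO_i \setminus V$ is uniformly separated from the critical set $\{0, z_i\}$; then $|DH|$ has a positive lower bound on $\{H=h\} \cap (\ol\gO_i \setminus V)$, uniformly for $h$ near $0$, while the length of this piece stays uniformly bounded, so the outer contribution is $O(1)$. For the inner piece, I combine the two estimates of (H3). Since $H(0) = 0$ and $DH(0) = 0$, the bound $|H_{x_ix_j}(x)| \leq A_1 |x|^m$ yields by Taylor's formula $|H(x)| \leq C_1 |x|^{m+2}$ on $V$, so any $x \in \{H=h\} \cap V$ satisfies $|x| \geq (|h|/C_1)^{1/(m+2)}$. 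Combining with $|DH(x)| \geq A_2 |x|^n$ on $V$ gives the uniform lower bound
\[
|DH(x)| \geq A_2 C_1^{-n/(m+2)} |h|^{n/(m+2)} \quad \text{on } \{H=h\} \cap V,
\]
whence $\int_{\{H=h\} \cap V} |DH|^{-1}\,dl \leq C\,|h|^{-n/(m+2)}\,\ell(\{H=h\} \cap V)$. Once the length $\ell(\{H=h\} \cap V)$ is bounded uniformly for $h$ near $0$, adding the two estimates gives $T_i(h) = O(|h|^{-n/(m+2)})$ (the outer $O(1)$ is dominated since $n/(m+2) > 0$).

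The main obstacle I anticipate is the uniform bound on $\ell(\{H=h\} \cap V)$ as $h \to 0$, which requires controlling the geometry of the level curves near the degenerate critical point $0$. For $V$ a sufficiently small disk this should follow from a topological and quantitative argument: the $N-1$ separatrix branches meeting at $0$ partition $V \setminus \{0\}$ into finitely many regions, each nearby level curve intersects $\pl V$ in a uniformly bounded number of points, and each resulting $C^1$ arc has bounded length — the last being the technical heart of the estimate and where the quantitative control in (H3) is essential.
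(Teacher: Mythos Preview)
The paper does not prove this lemma; it refers to Lemmas~3.2 and 3.3 of \cite{Ku17}, so there is no in-paper argument to compare against. Your approach to (i) via the $C^1$ first-return map on a transversal section is standard and correct.

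For (ii), the inner/outer decomposition and the pointwise lower bound $|DH(x)|\ge A_2C_1^{-n/(m+2)}|h|^{n/(m+2)}$ on $c_i(h)\cap V$ are exactly right; in fact the paper reproduces this same estimate later, as Lemma~\ref{L: max9}, for independent use in Section~\ref{S: maximal}. You have correctly isolated the one substantive gap: the uniform bound on $\ell(c_i(h)\cap V)$ as $h\to 0$. Your outline (finitely many crossings of $\partial V$, each arc of bounded length) has the right shape, but the second clause carries all the weight and your sketch does not explain how (H3) actually delivers it. This is not a soft consequence of smoothness: integrating the Hessian bound in (H3) from the origin gives $|DH(x)|\le C|x|^{m+1}$, which together with the lower bound forces $n\ge m+1$; hence the curvature of $c_i(h)$, of order $|D^2H|/|DH|\le C|x|^{m-n}$, may blow up as $x\to 0$, and a naive total-curvature argument does not close. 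Turning your sketch into a proof requires a genuine geometric argument --- for instance, showing that each arc of $c_i(h)\cap B_R$ projects injectively, with uniformly bounded derivative, onto the tangent line of the nearby separatrix branch at its exit point on $\partial B_R$ --- and this is precisely the work that the paper delegates to \cite{Ku17}.
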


We do not give here the proof of the lemma above, and refer for it to the proof of 
\cite[Lemmas 3.2 and 3.3]{Ku17}.

Since $n < m+2$, we see by  (ii) 
of Lemma \ref{L: PT} that
\begin{equation} \label{E: PT1}
T_i \in L^p(J_i) \ \ \   \If 1 \leq p < \frac{m+2}{n} \text{ and } \text{ for all } i \in \cI_0.
\end{equation}

\begin{lem} \label{L: PG}
Let $i \in \cI_0$.
\begin{itemize}
\item[(i)] $\ol G_i \in C(\bar J_i \setminus \0 \tim \R)$.

\item[(ii)] 
For any $h\in\bar J_i\setminus \{0\}$ and $q,q'\in\R$, 
\begin{equation*}
|\ol G_i(h, q) - \ol G_i(h, q')| \leq m_2(\max_{\ol\gO}|DH||q-q'|), 
\end{equation*}
where $m_2$ is the function from \emph{(G3)}. 
\item[(iii)] Let $\gamma$ be the positive number from \emph{(G4)}. For each $h \in J_i \cap (-\gam, \, \gam)$, the function $q \mapsto \ol G_i(h, q)$ is convex.

\item[(iv)] 
For every $(h,q)\in\bar J_i\setminus\{0\}\tim\R$, 
\begin{equation} \label{E: PG}
\ol G_i(h, q) \geq \frac{\nu L_i(h)}{T_i(h)}|q| - M,  
\end{equation} 
where $\nu,\,M$ are the constants from \emph{(G5)} and $L_i(h)$ denotes the length of $c_i(h)$, that is,
\begin{equation*}
L_i(h) = \int_{c_i(h)} \, dl.
\end{equation*}
\end{itemize}
\end{lem}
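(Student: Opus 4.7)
My plan is to establish (i) first using a smooth parametrization of level curves, then derive (ii)--(iv) directly from the defining integral formula together with the pointwise hypotheses on $G$.

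For (i), away from the critical value $0$, the set $c_i(h)$ is a smooth simple closed curve because $DH$ does not vanish there. The implicit function theorem, applied locally near each point of $c_i(h_0)$ for $h_0 \in \bar J_i \setminus \{0\}$, yields a smooth parametrization of $c_i(h)$ depending smoothly on $h$ in a neighborhood of $h_0$; patching yields a continuous family of parametrizations. This is exactly the same mechanism used for Lemma \ref{L: PT}(i). Combined with the continuity of $G$ on $\ol\gO \times \R^2$ and the uniform lower bound on $|DH|$ on the relevant compact subsets, the dominated convergence theorem shows that
\begin{equation*}
(h,q) \mapsto \int_{c_i(h)} \frac{G(x, qDH(x))}{|DH(x)|}\, dl
\end{equation*}
is continuous on $(\bar J_i \setminus \0) \times \R$. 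Dividing by the continuous positive function $T_i$ gives the continuity of $\ol G_i$.

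For (ii), by (G3) applied pointwise we get, for each $x \in c_i(h)$,
\begin{equation*}
|G(x, qDH(x)) - G(x, q'DH(x))| \leq m_2\bigl(|q-q'|\,|DH(x)|\bigr) \leq m_2\bigl(\|DH\|_{\infty,\ol\gO}\,|q-q'|\bigr),
\end{equation*}
using monotonicity of $m_2$. Dividing by $|DH(x)|$, integrating over $c_i(h)$, and dividing by $T_i(h)$ yields the claim. For (iii), any $h \in J_i \cap (-\gam,\gam)$ satisfies $h \ne 0$, so each $x \in c_i(h)$ lies in $\gO(\gam) \setminus c_0(0)$; by (G4) the integrand $q \mapsto G(x, qDH(x))/|DH(x)|$ is convex in $q$ for each such $x$, and convexity is preserved by the integral against the positive measure $dl$ and by division by the positive constant $T_i(h)$. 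For (iv), (G5) gives
\begin{equation*}
G(x, qDH(x)) \geq \nu |q|\,|DH(x)| - M,
\end{equation*}
so dividing by $|DH(x)|$ and integrating produces $\nu |q| L_i(h) - M T_i(h)$ in the numerator; division by $T_i(h)$ yields the stated bound.

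The only nonroutine step is (i), where one must justify that the line integral varies continuously in $h$; this is the same parametrization argument underlying Lemma \ref{L: PT} and should be handled by appealing to (or mildly adapting) the proof from \cite{Ku17}. The remaining items (ii)--(iv) are straightforward pointwise-to-integral passages.
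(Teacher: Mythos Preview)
Your proof is correct and follows essentially the same approach as the paper, which gives only a one-line outline attributing (i)--(iv) to (G1) together with Lemma~\ref{L: PT}(i), (G3), (G2), and (G5), respectively, and leaves the details to the reader. Note that the paper's attribution of (iii) to (G2) is evidently a misprint for (G4), which you correctly invoke.
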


\begin{proof} We give an outline of the proof, and we leave it to the reader to check the details. 
Assertions (i), (ii), (iii), and (iv) follow from (G1) and (i) of Lemma \ref{L: PT}, (G3), (G2), and (G5),
respectively.  
\end{proof}


We note that $\ol G_i$ are locally coercive in $\bar J_i \setminus \0$ in the sense that,
for any closed interval $I$ of $\bar J_i \setminus \0$,
\begin{equation}\label{E: cor}
\lim_{r \to \infty} \inf \{ \ol G_i(h, q) \mid h \in I, \ |q| \geq r \} = \infty.
\end{equation}
This is an easy consequence of the fact that $L_i(h)\geq l_0$ for all $(h,i)\in \bar J_i\setminus\0\tim \cI_0$ and some constant 
$l_0>0$, Lemma \ref{L: PT}, and \eqref{E: PG}.

The next lemma is taken from \cite[Lemma 3.6]{Ku17}.

\begin{lem} \label{L: G0}
We have
\begin{equation*}
\lim_{J_i \ni h \to 0} \min_{q \in \R} \ol G_i(h, q) = \lim_{J_i \ni h \to 0} \ol G_i(h, 0) = G(0, 0) \ \ \  \text{ for all } i \in \cI_0.
\end{equation*}
\end{lem}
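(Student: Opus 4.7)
Proof proposal. The plan is to exploit a concentration-of-mass property of the probability measure $d\mu_h := dl/(T_i(h)\,|DH(x)|)$ on the level set $c_i(h)$ as $h \to 0$. Integrating the Hessian bound in (H3) from $DH(0) = 0$, one has $|DH(x)| \leq C|x|^{m+1}$ in a neighborhood of $0$, while $|DH|$ is bounded below by a positive constant on any subset of $\ol\gO$ staying away from $0$. Thus $\int_{c_i(h)\setminus V} dl/|DH|$ is uniformly bounded in $h$ for each neighborhood $V$ of $0$, while $\int_{c_i(h)\cap V} dl/|DH|$ blows up as $h \to 0$ because $c_i(h)$ meets every neighborhood of $0$ (since $0 \in \ol{\partial D_i}$) and $1/|DH|$ is non-integrable at the critical point. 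Hence $T_i(h) \to \infty$ and $\mu_h(V) \to 1$ for every neighborhood $V$ of $0$. Continuity of $G(\cdot, 0)$ then yields
\[
\ol G_i(h, 0) = \int_{c_i(h)} G(x, 0) \, d\mu_h \to G(0, 0),
\]
which is the first equality.

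For the second equality the upper bound is immediate from $\min_q \ol G_i(h, q) \leq \ol G_i(h, 0)$ together with the first equality. For the lower bound, the same concentration argument upgrades to show that $\ol G_i(h, q) \to G(0, 0)$ \emph{uniformly for $q$ in any compact subset of $\R$}: on the concentration region of $\mu_h$ near $0$ one has $|q\, DH(x)| \leq |q|\,C\,|x|^{m+1}\to 0$ uniformly in bounded $q$, so $G(x, q DH(x)) \to G(0, 0)$ by continuity of $G$. Let $q_h$ be a minimizer of $\ol G_i(h, \cdot)$, which exists by the coercivity \eqref{E: cor}. From Lemma \ref{L: PG}(iv) and $\ol G_i(h, q_h) \leq \ol G_i(h, 0) = O(1)$ we get $|q_h| = O(T_i(h)/L_i(h))$. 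If a subsequence of $\{q_h\}$ remains bounded, then uniform convergence on compacts gives $\ol G_i(h, q_h) \to G(0, 0)$ along that subsequence, and we are done.

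The remaining case $|q_h| \to \infty$ is the main obstacle. Here one splits $c_i(h) = A_h \cup B_h$ where $A_h := \{|q_h DH(x)| \geq R\}$ and $B_h := \{|q_h DH(x)| < R\}$, with $R$ chosen (using (G5)) so that $\nu R - M \geq G(0, 0) - \eta$ for a prescribed $\eta > 0$; on $A_h$ the integrand is $\geq G(0, 0) - \eta$ directly. On $B_h$ the condition $|DH(x)| < R/|q_h|$ forces $B_h$ into a shrinking neighborhood of $0$ (since $|q_h|\to\infty$), so (G2) gives $G(x, q_h DH(x)) \geq G(0, q_h DH(x)) + o(1)$ and, using the convexity of $q \mapsto G(x, q DH(x))$ from (G4) together with the uniformly small subgradient $|\partial_q \ol G_i(h, 0)| \leq C L_i(h)/T_i(h) \to 0$, one controls the contribution from $B_h$. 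Assembling the two contributions with $\eta$ arbitrary yields $\liminf_{h\to 0} \ol G_i(h, q_h) \geq G(0, 0)$.

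The delicate point is precisely this last case: because the coercivity slope $\nu L_i(h)/T_i(h)$ and the subgradient bound at $0$ both degenerate at the same rate $L_i(h)/T_i(h)$, no combination of these two bounds alone is tight enough; the proof must use, in addition, the convexity in $q$ from (G4) together with the pointwise limits $\ol G_i(h, q) \to G(0, 0)$ to rule out minimizers drifting to $\pm\infty$ with nontrivially negative values.
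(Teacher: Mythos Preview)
The paper does not prove this lemma; it only cites \cite[Lemma 3.6]{Ku17}. So there is no in-paper proof to compare against, and I assess your argument on its own merits.

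Your concentration argument for $\ol G_i(h,0)\to G(0,0)$ is correct in outline. The step $T_i(h)\to\infty$ deserves a cleaner justification than ``non-integrability of $1/|DH|$ at the critical point,'' since the line integral is along the moving curve $c_i(h)$, not through $0$; one should instead argue that for every small $\gd>0$ the curve $c_i(h)$ eventually enters $B_{\gd/2}(0)$, so that the arc in $B_\gd$ has length bounded below while $|DH|\leq C\gd^{m+1}$ there. With this in hand, the concentration $\mu_h(V)\to 1$ and the limit $\ol G_i(h,0)\to G(0,0)$ follow as you indicate, and the same reasoning gives the locally uniform limit $\ol G_i(h,q)\to G(0,0)$ on compact $q$-sets. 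The upper bound for $\min_q\ol G_i(h,q)$ and the bounded-$q_h$ case are then fine.

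The genuine gap is in the lower bound $\liminf_{h\to 0}\min_q \ol G_i(h,q)\geq G(0,0)$ when $|q_h|\to\infty$. Your treatment of $B_h=\{|q_h DH(x)|<R\}$ does not close: after reducing via (G2) to $G(0,q_h DH(x))$ with $|q_h DH(x)|<R$, there is no reason this should be $\geq G(0,0)-\eta$, since $p\mapsto G(0,p)$ need not attain its minimum at $p=0$. The fallback you sketch---convexity of $\ol G_i(h,\cdot)$ together with the pointwise limits $\ol G_i(h,q)\to G(0,0)$---is also insufficient in the abstract. For instance, the convex functions
\[
\phi_h(q)=G(0,0)-1+a_h\bigl|q-a_h^{-1}\bigr|,\qquad a_h\to 0^+,
\]
satisfy $\phi_h(q)\to G(0,0)$ for every fixed $q$ and a coercivity bound $\phi_h(q)\geq a_h|q|-C$ of exactly the form in Lemma~\ref{L: PG}(iv), yet $\min_q\phi_h\equiv G(0,0)-1$. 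Your own final paragraph already concedes that ``no combination of these two bounds alone is tight enough,'' but the additional ingredient you allude to is never made precise, and the subgradient bound you quote gives only an $O(1)$ error (the slope $L_i/T_i$ and the scale $|q_h|\lesssim T_i/L_i$ cancel). A complete proof must exploit more structure of $\ol G_i$ than convexity, pointwise limits, and \eqref{E: PG}; this is presumably where the detailed analysis of \cite{Ku17} enters.
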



\begin{lem} \label{L: PS1}
Let $i \in \cI_0$ and $v \in \USC(J_i)$ be a viscosity subsolution of \emph{(HJ$_i$)}.
Then $u$ is uniformly continuous in $J_i$ and, hence,
it can be extended uniquely to $\bar J_i$ as a continuous function on $\bar J_i$.
Furthermore the extended function is also locally Lipschitz continuous in $\bar J_i \setminus \0$.
\end{lem}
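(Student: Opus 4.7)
My approach combines the local coercivity of $\ol G_i$ on $\bar J_i\setminus\{0\}$ (which forces local Lipschitz regularity of $v$) with the $L^1$-integrability of $T_i$ at the singular endpoint $0$ (which controls the oscillation of $v$ up to $0$). First I would fix a compact interval $[a,b]\subset J_i$: by Lemma \ref{L: PT}\,(i), $T_i$ is continuous on $\bar J_i\setminus\{0\}$, so bounded on $[a,b]$, while the level-set length $L_i(h)$ is bounded below by a positive constant. Combined with \eqref{E: PG}, this shows that $\ol G_i(h,\cdot)$ is coercive uniformly for $h\in[a,b]$. The standard coercivity argument in viscosity theory (comparison against affine test functions of large slope $\pm K$, combined with the a priori local bound on $v$) then yields that the locally bounded USC subsolution $v$ is Lipschitz on $[a,b]$. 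In particular $v\in C(J_i)$, is differentiable a.e., and at every such point satisfies $\gl v(h)+\ol G_i(h,v'(h))\leq 0$.

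Invoking \eqref{E: PG} once more, this pointwise inequality gives
$$|v'(h)|\le \frac{T_i(h)}{\nu L_i(h)}\bigl(M+\gl|v(h)|\bigr)\le C\,T_i(h)\bigl(1+|v(h)|\bigr)\quad\text{a.e.\ in } J_i,$$
for some constant $C>0$. Since $n<m+2$ by (H3), estimate \eqref{E: PT1} with $p=1$ gives $T_i\in L^1(J_i)$, and applying Gronwall's inequality to $1+|v|$ then produces a \emph{global} bound $\|v\|_{\infty,J_i}<\infty$. With this in hand the pointwise estimate sharpens to $|v'(h)|\le C'T_i(h)$ a.e., whence
$$|v(h_1)-v(h_2)|\le C'\int_{h_1\wed h_2}^{h_1\vee h_2} T_i(s)\,ds\quad\text{for all } h_1,h_2\in J_i,$$
and the right-hand side tends to $0$ uniformly as $|h_1-h_2|\to 0$ by absolute continuity of the Lebesgue integral. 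This delivers the uniform continuity of $v$ on $J_i$ and hence the unique continuous extension to $\bar J_i$.

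For the final claim, local Lipschitz continuity on $\bar J_i\setminus\{0\}$ follows by re-running the first step on compact intervals $[a,b]\subset\bar J_i\setminus\{0\}$ that may now include the nonzero endpoint $h_i$: the uniform bounds on $T_i$, $L_i^{-1}$, and on $v$ all persist there (using continuity of $T_i,L_i$ on $\bar J_i\setminus\{0\}$ and the global bound on $v$), so the Lipschitz constant obtained on $(a,b]$ with $a\to h_i$ is independent of $a$, and the continuous extension to $h_i$ inherits this Lipschitz bound. The main technical subtlety I expect is the rigorous passage from the viscosity subsolution property to the pointwise a.e.\ differential inequality, but this is standard once $v$ is known to be locally Lipschitz; from there the whole argument is really a Gronwall estimate plus the integrability of $T_i$.
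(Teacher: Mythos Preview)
Your proposal is correct and follows essentially the same route that the paper indicates: the paper does not spell out a proof but states that Lemmas \ref{L: PS1} and \ref{L: PS2} are ``easy consequences of \eqref{E: PT1} and \eqref{E: PG}'' and refers to \cite[Lemmas 3.2--3.4]{Ku16}, which is exactly the combination you use---coercivity from \eqref{E: PG} for the local Lipschitz bound, then the $L^1$-integrability of $T_i$ from \eqref{E: PT1} for the uniform modulus near $0$. Your Gronwall step to upgrade local boundedness to a global bound is a clean way to close the argument and matches the spirit of the cited reference.
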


\begin{lem} \label{L: PS2}
Let $i \in \cI_0$ and $\cF$ be a family of viscosity 
subsolutions of \emph{(HJ$_i$)}.
Assume that $\cF \cap C(\bar J_i)$ is uniformly bounded on $\bar J_i$.
Then $\cF \cap C(\bar J_i)$ is equi-continuous on $\bar J_i$.
\end{lem}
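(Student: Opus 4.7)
The plan is to derive, from the subsolution inequality combined with the uniform bound on $\cF \cap C(\bar J_i)$, a common modulus of continuity on $\bar J_i$. Let $K > 0$ satisfy $\|u\|_{\infty,\bar J_i} \leq K$ for every $u \in \cF \cap C(\bar J_i)$; then each such $u$ is a viscosity subsolution of $\ol G_i(h, u') \leq \lambda K$ on $J_i$. By Lemma~\ref{L: PS1}, $u$ is locally Lipschitz on $\bar J_i \setminus \0$, hence differentiable a.e., and the inequality becomes a pointwise a.e.\ bound at those points.

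Away from the node, I would invoke the local coercivity \eqref{E: cor}: for any closed subinterval $I \sbs \bar J_i \setminus \0$ there exists $R_I$ (depending only on $I$ and $K$) such that $\ol G_i(h, q) \leq \lambda K$ and $h \in I$ force $|q| \leq R_I$. This yields an equi-Lipschitz estimate on $I$, hence equi-continuity there, uniformly over $u \in \cF \cap C(\bar J_i)$.

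Near $h = 0$ the coercivity degenerates, and a more refined estimate is needed. Here I would use the explicit lower bound of Lemma~\ref{L: PG}(iv) to deduce $|u'(h)| \leq (\lambda K + M)\, T_i(h)/(\nu L_i(h))$ a.e. The geometry of $H$ ensures that $L_i$ is bounded below on the compact set $\bar J_i$ by some $l_0 > 0$, so $|u'(h)| \leq C\, T_i(h)$ with $C$ independent of $u$. The decisive ingredient is then \eqref{E: PT1}: since $n < m+2$, we have $T_i \in L^1(J_i)$, and integrating yields
\[
|u(h) - u(0)| \leq C \int_{[0,\,h]} T_i(s)\, ds,
\]
whose right-hand side tends to $0$ as $h \to 0$ independently of $u$. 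Combined with the previous paragraph, this produces equi-continuity on all of $\bar J_i$.

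The principal obstacle is precisely the breakdown of coercivity of $\ol G_i$ at $h = 0$. It is overcome by converting the qualitative coercivity \eqref{E: cor} into the quantitative bound of Lemma~\ref{L: PG}(iv) and by exploiting the $L^1$-integrability of $T_i$ furnished by the transversality-type hypothesis (H3) via Lemma~\ref{L: PT}(ii). A secondary technical point is the positivity of $\inf_{\bar J_i} L_i$, which follows from the structure of the level sets of $H$ described after (H3) but should be checked explicitly.
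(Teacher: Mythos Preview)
Your proposal is correct and matches the paper's approach: the paper does not give a detailed proof but states that Lemmas~\ref{L: PS1} and~\ref{L: PS2} are ``easy consequences of \eqref{E: PT1} and \eqref{E: PG},'' referring to \cite[Lemmas 3.2--3.4]{Ku16} for details, and you have reconstructed precisely that argument. One minor simplification: the two-case split (away from the node versus near the node) is unnecessary, since the pointwise bound $|u'(h)| \leq (\lambda K + M)\,T_i(h)/(\nu l_0)$ holds a.e.\ on all of $J_i$ and, together with $T_i \in L^1(J_i)$, directly gives the common modulus $|u(h_1)-u(h_2)| \leq C\int_{h_1}^{h_2} T_i(s)\,ds$ on the whole interval.
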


These two lemmas are easy consequences of \eqref{E: PT1} and \eqref{E: PG}. 
We refer to \cite[Lemmas 3.2--3.4]{Ku16} for the detail of the proof.

The local coercivity of $\ol G_i$ ensures that the classical inequalities hold at $h_i$ for any viscosity subsolutions of (HJ$_i$) and (BC$_i$).

\begin{lem} \label{L: PS4}
Let $i \in \cI_0$ and $v \in C(\bar J_i)$ be a viscosity subsolution of \emph{(HJ$_i$)} and \emph{(BC$_i$)}.
Then we have \ $v(h_i) \leq \min_{\pl_i\gO} g$.
\end{lem}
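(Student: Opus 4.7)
The plan is to argue by contradiction. Suppose $v(h_i) > \min_{\pl_i \gO} g$; the goal is to violate the viscosity subsolution condition at $h_i$ by constructing a linear test function whose slope at $h_i$ is taken arbitrarily large in magnitude, and then to invoke the local coercivity of $\ol G_i$ stated in \eqref{E: cor}.

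First I will use the regularity of $v$ at $h_i$ to produce a flexible test function. By Lemma \ref{L: PS1} the function $v$ is locally Lipschitz on $\bar J_i \setminus \0$, and since $h_i \neq 0$, there exist $\delta > 0$ and $L > 0$ such that $v$ is $L$-Lipschitz on $N_\delta := \bar J_i \cap [h_i - \delta, h_i + \delta]$. Set $\sigma_i := 1$ if $i \in \cI_1$ (so $h_i$ is the left endpoint of $\bar J_i$) and $\sigma_i := -1$ if $i = 0$ (so $h_0$ is the right endpoint), so that $\sigma_i(h - h_i) = |h - h_i|$ for every $h \in \bar J_i$ close to $h_i$. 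For each $K \geq L$, I take
$$\phi_K(h) := v(h_i) + \sigma_i K(h - h_i),$$
which belongs to $C^1(\bar J_i \setminus \0)$ with $\phi_K'(h_i) = \sigma_i K$. For $h \in N_\delta$ the Lipschitz bound gives
$$(v - \phi_K)(h) - (v - \phi_K)(h_i) = v(h) - v(h_i) - \sigma_i K(h - h_i) \leq (L - K)|h - h_i| \leq 0,$$
so $v - \phi_K$ attains a local maximum at $h_i$, and $\phi_K$ is therefore an admissible test function.

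The subsolution boundary condition at $h_i$ then reads
$$\min\bigl\{\gl v(h_i) + \ol G_i(h_i, \sigma_i K),\ v(h_i) - \min_{\pl_i \gO} g\bigr\} \leq 0.$$
By the contradiction hypothesis the second argument is strictly positive, so one must have $\ol G_i(h_i, \sigma_i K) \leq -\gl v(h_i)$ for every $K \geq L$. Applying the local coercivity \eqref{E: cor} to the singleton interval $\{h_i\} \subset \bar J_i \setminus \0$ gives $\ol G_i(h_i, \sigma_i K) \to \infty$ as $K \to \infty$, which yields the desired contradiction. The only delicate point is the sign bookkeeping: whether $h_i$ sits at the left or right end of $\bar J_i$ determines the sign of the admissible slope of the test function, because $v - \phi_K$ must decrease as one moves away from $h_i$ into the interior of $\bar J_i$. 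The parameter $\sigma_i$ handles both cases uniformly; once it is in place the proof becomes routine.
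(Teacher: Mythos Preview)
Your proof is correct and follows precisely the approach the paper indicates: the paper does not spell out a proof of this lemma but simply remarks that ``the local coercivity of $\ol G_i$ ensures that the classical inequalities hold at $h_i$,'' and your argument is exactly the standard way to make that remark rigorous. The only minor comment is that invoking Lemma~\ref{L: PS1} for the Lipschitz bound is convenient but not strictly necessary; one could instead add a small strictly increasing penalty (e.g.\ $C|h-h_i|^{1/2}$) to force the maximum to $h_i$ without any regularity of $v$ beyond continuity, but your route is cleaner given that Lemma~\ref{L: PS1} is already available.
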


Thanks to Lemmas \ref{L: PG} and \ref{L: PS4}, 
the comparison principle is valid for (HJ$_i$) and (BC$_i$), 
as stated in the next lemma.  

\begin{lem} \label{L: limCP}
Let $i \in \cI_0$ and let $v \in C(\bar J_i)$ and $w \in \LSC(\bar J_i)$ be, respectively, a viscosity sub- and supersolution of \emph{(HJ$_i$)} and \emph{(BC$_i$)}.
Assume that $v(0) \leq w(0)$.
Then $v(h) \leq w(h)$ for all $h \in \bar J_i$.
\end{lem}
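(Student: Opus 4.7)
The plan is to argue by contradiction. Suppose that $M := \max_{\bar J_i}(v - w) > 0$; this maximum is attained because $v - w$ is upper semicontinuous on the compact interval $\bar J_i$. Let $\hat h \in \bar J_i$ be a maximizer. The assumption $v(0) \leq w(0)$ precludes $\hat h = 0$, so $\hat h \in \bar J_i \setminus \{0\}$. The argument splits into the interior case $\hat h \in J_i$ and the boundary case $\hat h = h_i$.

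For the interior case, I would run the classical doubling-of-variables argument, localized to a compact subinterval $I \subset \bar J_i \setminus \{0\}$ with $\hat h$ in its interior. For small $\alpha > 0$, let $(h_\alpha, k_\alpha) \in I \times I$ be a maximizer of
\[
\Phi_\alpha(h,k) := v(h) - w(k) - \frac{(h-k)^2}{2\alpha}.
\]
Standard semicontinuity and compactness arguments give $(h_\alpha, k_\alpha) \to (\hat h, \hat h)$ and $(h_\alpha - k_\alpha)^2/\alpha \to 0$ as $\alpha \to 0^+$, so for small $\alpha$ both maximizers lie in the interior of $I$. Setting $p_\alpha := (h_\alpha - k_\alpha)/\alpha$, the viscosity sub- and supersolution inequalities read
\[
\gl v(h_\alpha) + \ol G_i(h_\alpha, p_\alpha) \leq 0 \leq \gl w(k_\alpha) + \ol G_i(k_\alpha, p_\alpha).
\]
The local coercivity \eqref{E: cor} of $\ol G_i$ on $I$, coupled with the left inequality, bounds $\{p_\alpha\}$ uniformly. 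Continuity of $\ol G_i$ on $I \times \R$ (Lemma \ref{L: PG}(i)) then yields $\ol G_i(k_\alpha, p_\alpha) - \ol G_i(h_\alpha, p_\alpha) \to 0$, and subtracting the two inequalities and letting $\alpha \to 0^+$ produces $\gl M \leq 0$, the desired contradiction.

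For the boundary case $\hat h = h_i$, I would again double variables on a compact subinterval with $h_i$ as an endpoint. If along some subsequence one of the maximizers $h_\alpha, k_\alpha$ remains interior to that subinterval, the Case~1 argument applies. Otherwise both maximizers collapse to $h_i$, and the viscosity Dirichlet conditions at $h_i$ must be invoked: Lemma \ref{L: PS4} provides $v(h_i) \leq \min_{\pl_i \gO} g$, while the supersolution boundary condition at $h_i$ supplies either an effective-equation inequality that feeds back into the Case~1 estimate, or a classical boundary inequality bounding $w(h_i)$ from below incompatibly with $v(h_i) - w(h_i) = M > 0$.

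The main technical obstacle is the interplay between the loss of coercivity of $\ol G_i$ at $h = 0$ and the viscosity-sense Dirichlet condition at $h_i$. The hypothesis $v(0) \leq w(0)$ is precisely what confines the doubling to compact subsets of $\bar J_i \setminus \{0\}$, where \eqref{E: cor} furnishes the gradient bound on $\{p_\alpha\}$ that drives the closure of the argument.
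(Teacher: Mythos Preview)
Your approach is essentially the standard one, and it matches what the paper has in mind: the paper omits the proof entirely, remarking only that the lemma follows from Lemmas \ref{L: PG} and \ref{L: PS4}. Your doubling argument in the interior, driven by the local coercivity \eqref{E: cor}, is exactly the intended mechanism, and your use of Lemma \ref{L: PS4} together with the supersolution boundary dichotomy at $h_i$ is the right way to handle the endpoint.

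There is, however, one loose end in your boundary case. Your dichotomy (``either one maximizer stays interior and Case~1 applies, or both collapse to $h_i$'') does not isolate the sub-case in which $h_\alpha = h_i$ while $k_\alpha$ is interior, or both equal $h_i$. In that situation the supersolution side is fine (since $w(h_i) < \min_{\pl_i\gO}g$ forces the equation inequality for $w$ at $h_i$), but on the subsolution side Lemma \ref{L: PS4} only gives $v(h_i) \leq \min_{\pl_i\gO}g$, not the viscosity inequality $\gl v(h_i) + \ol G_i(h_i, p_\alpha) \leq 0$ that you need to subtract. The clean fix is to use the Lipschitz regularity of $v$ near $h_i$ supplied by Lemma \ref{L: PS1}: replacing the penalty $(h-k)^2/(2\alpha)$ by $(h-k+\ep_\alpha)^2/(2\alpha)$ with $\ep_\alpha$ slightly larger than $L\alpha$ (where $L$ is the local Lipschitz constant of $v$) forces $h_\alpha$ strictly into the interior, while keeping $\ep_\alpha^2/\alpha \to 0$ so that the limiting estimates are unaffected. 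This shift trick is standard in state-constraint comparison arguments and closes the gap.
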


\section{Proof of the main theorem}\label{S: proof main}

We present the proof in two parts.

\begin{proof}[Proof of (i) of Theorem \ref{T: main}] In view of Lemmas \ref{L: PG} and 
\ref{L: G0}, we  may choose a constant $C>0$ so that 
\[
|\ol G_i(h,0)|\leq \gl C \ \ \text{ for all } h\in J_i,\, i\in\cI_0, 
\text{ and } \ \  |g(x)|\leq C \ \ \text{ for all } x\in\pl\gO.
\]
It is obvious that the $N$--tuple of the constant function $C$ and 
that of $-C$ are a viscosity super- and sub-solution of \eqref{HJ}, respectively.   
We may assume that $\gl C\geq M$, where $M$ is the constant from (G5).

Let $\cS_C^-$ denote the set of $(v_0,\ldots,v_{N-1})\in \prod_{i=0}^{N-1} C(\bar J_i)$ 
such that $v_i$ is a viscosity subsolution of (HJ$_i$) in $J_i$ for any $i\in\cI_0$,  
$v_0(0)=\ldots=v_{N-1}(0)$, and $|v_i|\leq C$ in $J_i$ for all $i\in\cI_0$. 

According to Lemma \ref{L: PS2}, the family $\cS_C^-$ is equi-continuous in the sense that for every $i\in\cI_0$, the family $\{v_i\in C(\bar J_i)\mid (v_0,\ldots,v_{N-1})
\in\cS_C^-\}$ is equi-continuous on $\bar J_i$. 
 Hence, setting 
\[
u_i(h)=\sup\{v_i(h)\mid (v_0,\ldots,v_{N-1})\in\cS_C^-\} \ \ \text{ for } h\in\bar J_i,\,i\in\cI_0,
\]
we see that $u:=(u_0,\ldots,u_{N-1})\in \prod_{i\in\cI_0}C(\bar J_i)$ and $u_0(0)=\ldots=u_{N-1}(0)$. 
Moreover, in view of the Perron method, 
we find that $u\in\cS$. 

To see the maximality of $(u_0,\ldots,u_{N-1})$, let $(v_0,\ldots,v_{N-1})$ be a 
viscosity solution of \eqref{HJ}. Note by (iii) of Lemma \ref{L: PG} that for any $i\in\cI_0$, we have,
in the viscosity sense,
\[
0\geq \gl v_i+\frac{\nu L_i(h)}{T_i(h)}|v_i')-M \geq \gl v_i-\gl C \ \ \IN J_i, 
\]
which implies that $v_i\leq C$ on $\bar J_i$. We set 
\[
w_i:=v_i\vee(-C)=\min\{v_i,\,-C\} \ \ \text{ on }\bar J_i,\,i\in\cI_0.
\]
It is easily seen that $(w_0,\ldots,w_{N-1})\in\cS_C^-$, and consequently, 
$v_i\leq w_i\leq u_i$ on $\bar J_i, i\in\cI_0$. Thus, $u$ is a maximal viscosity 
solution of \eqref{HJ}.  
\end{proof}

We need some preliminary observations before going into the proof of (ii) of Theorem \ref{T: main}.  

Since the set $\bigcup_{\ep>0}\cS_\ep$ is uniformly bounded on $\ol \gO$ by Proposition \ref{P: EB}, and hence, 
the half relaxed-limits $v^+$ and $v^-$ 
of $\cS_\ep$, as $\ep \to 0+$, 
\begin{equation} \label{E: 4.1}
\left\{\begin{aligned}
&v^+ (x) = \lim_{r \to 0+} \sup \{ u(y) \mid u\in\cS_\ep,\  
y \in B_r (x) \cap \ol \gO, \ \ep \in (0, r) \}, \\
&v^- (x) = \lim_{r \to 0+} \inf \{ u(y) \mid u\in\cS_\ep,\  y \in B_r (x) \cap \ol \gO, \ \ep \in (0, r) \}
\end{aligned}\right.
\end{equation}
are well-defined, bounded and, respectively, upper and lower semicontinuous on $\ol \gO$. 

For $i\in\cI_0$, we set
\begin{equation}\label{E: 4.2}
v^+_i(h)=\max_{c_i(h)}v^+ \ \ \text{ for } h\in \bar J_i\setminus \{h_i\} \ \ \text{ and } \ \ v_i^-(h)=\min_{c_i(h)}v^- \ \ \text{ for } h \in \bar J_i, 
\end{equation}
and \ $v_i^+(h_i)=\limsup_{J_i\ni h\to h_i}v_i^+(h)$. 

It is easily seen that $v_i^+\in\USC(\bar J_i)$ and $v_i^-\in\LSC(\bar J_i)$ for all $i\in\cI_0$. 

For the proof of (ii) of Theorem \ref{T: main}, the following three 
propositions are crucial.   
\begin{prop} \label{P: loop} For any $i\in\cI_0$ and $h\in J_i$, 
\begin{equation}\label{E: loop1}
v^+(x) =v_i^+(h)  \ \ \text{ and } \ \ v^-(x)=v_i^-(h) \ \ \text{ for all } x\in c_i(h).
\end{equation}
\end{prop}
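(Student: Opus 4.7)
The plan is to show first that $v^+$ and $v^-$ are, respectively, a viscosity subsolution and supersolution in each $\gO_i$ of the reduced transport equation $-b\cdot Du=0$, and then to exploit that the level sets $c_i(h)$ for $h\in J_i$ are swept out by closed orbits of the Hamiltonian vector field $b$.

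For the first step, I would fix $i\in\cI_0$, $\phi\in C^1(\gO_i)$, and a point $\hat x\in\gO_i$ at which $v^+-\phi$ attains a strict local maximum. By the definition of $v^+$ in \eqref{E: 4.1} and a standard perturbation argument, one produces sequences $\ep_k\to 0+$, $u_k\in\cS_{\ep_k}$, and $x_k\to\hat x$ such that $u_k^*(x_k)\to v^+(\hat x)$ and $x_k$ is a local maximum of $u_k^*-\phi$. The viscosity subsolution inequality for $u_k$ at $x_k$ reads
\[
\gl\,u_k^*(x_k)-\ep_k^{-1}\,b(x_k)\cdot D\phi(x_k)+G(x_k,D\phi(x_k))\leq 0.
\]
Multiplying by $\ep_k$ and passing to the limit, using the uniform bound of Proposition~\ref{P: EB} and the continuity of $b$ and $G$, one obtains $b(\hat x)\cdot D\phi(\hat x)\geq 0$. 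Thus $v^+$ is a viscosity subsolution of $-b\cdot Du=0$ in $\gO_i$; a symmetric computation with the supersolution inequality shows that $v^-$ is a viscosity supersolution of the same equation in $\gO_i$.

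For the second step, I would use that $b=(H_{x_2},-H_{x_1})$ is tangent to the level curves of $H$ and, in view of \emph{(H2)}, does not vanish in $\gO_i$. Hence, for every $h\in J_i$, each connected component of $c_i(h)$ is a smooth simple closed curve that is a periodic orbit of $b$. The standard characterization of viscosity subsolutions of the transport equation $-b\cdot Du=0$ then gives that the USC function $v^+$ is nondecreasing along each such orbit, and the periodicity of the orbit upgrades this to $v^+$ being constant on it; symmetrically, $v^-$ is constant on each orbit. When $i=0$ and $c_0(h)$ has several components, the constant values across distinct components must still agree: this follows from a limiting argument as $h\to 0+$, where all components collapse to the critical point $0$ of $H$, combined with continuity in $h$ of the common value on an individual component. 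Putting the pieces together yields $v^+(x)=v_i^+(h)$ and $v^-(x)=v_i^-(h)$ for every $x\in c_i(h)$, as required.

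The hard part will be the rigorous passage, in the second step, from viscosity-sense monotonicity along characteristics to genuine constancy on closed orbits, since $v^\pm$ are only semicontinuous and not a priori Lipschitz. A convenient device is to set up near an orbit a local coordinate pair $(H,t)$, where $t$ is the flow-time of $b$, and to probe $v^\pm$ with test functions of the form $\phi=\eta\,t$ for small $\eta>0$; together with the closedness of the orbit this upgrades the one-sided monotonicity to constancy. The matching of values across distinct connected components of $c_0(h)$ is a secondary technical point and will require a careful analysis of the geometry of the level sets of $H$ in a neighborhood of the origin.
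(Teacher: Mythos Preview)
Your outline is essentially the paper's: pass to the limit to see that $v^+$ and $v^-$ are a viscosity sub- and supersolution of the transport equation $-b\cdot Du=0$, deduce monotonicity of $t\mapsto v^\pm(X(t,x))$ along the Hamiltonian flow, and then use the periodicity of the orbits $c_i(h)$ to conclude constancy. The paper packages the monotonicity step as Proposition~\ref{app1} (applied with $\gl=0$, $f\equiv 0$), which directly gives $v^+(X(\gs,x))\leq v^+(X(\tau,x))$ for $\gs<\tau$; this is cleaner than your proposed local $(H,t)$-coordinate argument with test functions $\eta\,t$, though both lead to the same place. Note also that once monotonicity is in hand, the step ``nondecreasing $+$ periodic $\Rightarrow$ constant'' is trivial; the genuine analytic content is obtaining monotonicity for a merely semicontinuous function, and that is exactly what Proposition~\ref{app1} provides.

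Your worry about multiple connected components of $c_0(h)$ is misplaced. The paper records, just before the proof, that for every $i\in\cI_0$ and $h\in\bar J_i\setminus\{0\}$ the curve $c_i(h)$ is $C^1$-diffeomorphic to $S^1$; in particular each $c_i(h)$ with $h\in J_i$ is a single periodic orbit of $b$. So there is no ``matching across components'' to perform. It is worth noting, moreover, that your proposed fix for this non-issue---a limiting argument as $h\to 0+$ together with ``continuity in $h$ of the common value''---would not be available at this stage in any case, since $v^\pm$ are only semicontinuous and no continuity in $h$ has been established.
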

\begin{thm} \label{T: Cl1}
For every $i \in \cI_0$, 
the functions $v_i^+$ and $v_i^-$ are, respectively, a viscosity sub- and 
supersolution of \emph{(HJ$_i$)} and \emph{(BC$_i$)}.
\end{thm}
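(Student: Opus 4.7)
The plan is to use the perturbed test function method of Evans, adapted to this Hamiltonian averaging setting; I describe the argument for the subsolution statement for $v_i^+$, the supersolution statement for $v_i^-$ being entirely symmetric. Fix $i\in\cI_0$, $\phi\in C^1(J_i)$ and $h^*\in J_i$ where $v_i^+-\phi$ attains a strict local maximum (achievable by adding a small quadratic penalty if necessary), and put $q:=\phi'(h^*)$. The decisive structural identity is $b\cdot DH\equiv 0$, so the base test function $\Phi:=\phi\circ H$ lies in the kernel of the stiff drift, namely $\ep^{-1}b\cdot D\Phi=0$, and only the order-one residual
\[
R(x):=G(x,qDH(x))-\ol G_i(h^*,q)
\]
needs to be killed by a corrector. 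Since the $b$-orbit $X(\cdot)$ through any basepoint of $c_i(h^*)$ is periodic with period $T_i(h^*)$ and $R$ has zero mean along it by definition of $\ol G_i$, the primitive
\[
w(X(t)):=\int_0^t R(X(s))\,ds
\]
is well-defined on $c_i(h^*)$. Because $h^*\neq 0$ forces $DH\neq 0$ on $c_i(h^*)$ by (H3), $(H,\theta)$-coordinates (with $\theta$ the flow-time along $b$) are smooth in a tubular neighborhood of $c_i(h^*)$; declaring $w$ independent of $H$ extends it to a $C^1$ function in this neighborhood.

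Setting $\psi^\ep:=\Phi+\ep w$ and applying the viscosity subsolution inequality of $u^\ep\in\cS_\ep$ at a maximum point $x^\ep$ of $u^\ep-\psi^\ep$ in this tubular neighborhood, the stiff drift collapses via $b\cdot D\Phi=0$ to give
\[
\lambda u^\ep(x^\ep)-b(x^\ep)\cdot Dw(x^\ep)+G\bigl(x^\ep,\phi'(H(x^\ep))DH(x^\ep)+\ep Dw(x^\ep)\bigr)\leq 0.
\]
Using Proposition \ref{P: loop} and the strictness of the maximum at $h^*$, one extracts along a subsequence $x^\ep\to x^*\in c_i(h^*)$ with $u^\ep(x^\ep)\to v^+(x^*)=v_i^+(h^*)$. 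The cell-problem identity $-b(x)\cdot Dw(x)+G(x,qDH(x))=\ol G_i(h^*,q)$ on $c_i(h^*)$, combined with (G3) and the continuity of $\ol G_i$ from Lemma \ref{L: PG}, then yields $\lambda v_i^+(h^*)+\ol G_i(h^*,q)\leq 0$ in the limit; this is the required subsolution inequality for (HJ$_i$) in the interior of $J_i$.

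For the boundary condition at $h=h_i$, suppose $v_i^+-\phi$ has a local maximum at $h_i$; if $v_i^+(h_i)\leq\min_{\pl_i\gO}g$ there is nothing to prove, so assume the opposite. Repeat the corrector construction on a one-sided tubular neighborhood of $\pl_i\gO=c_i(h_i)$ inside $\ol\gO_i$ (which is regular since $h_i\neq 0$), pick a maximum point $x^\ep$ of $u^\ep-\psi^\ep$ in this strip, and split cases. If $x^\ep\in\gO$ along a subsequence, the interior argument applies unchanged and delivers $\lambda v_i^+(h_i)+\ol G_i(h_i,\phi'(h_i))\leq 0$. If instead $x^\ep\in\pl_i\gO$, the viscosity boundary condition for $u^\ep$ gives either the same HJ inequality (which passes to the limit as before) or $u^\ep(x^\ep)\leq g(x^\ep)$; in the latter case, invoking Proposition \ref{P: loop} and the freedom to translate the basepoint of $w$ along the periodic orbit in $\pl_i\gO$ allows one to place $x^\ep$ near an arbitrary prescribed point of $\pl_i\gO$, so the inequality upgrades in the limit to $v_i^+(h_i)\leq g(x)$ for every $x\in\pl_i\gO$, and in particular $v_i^+(h_i)\leq\min_{\pl_i\gO}g$, contradicting the standing assumption.

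The principal obstacle is this last dichotomy at $h_i$: separating the genuine HJ contribution from the boundary-data contribution along the closed $b$-orbit $\pl_i\gO$, and showing that the only effective boundary value that emerges is the \emph{minimum} of $g$ on $\pl_i\gO$ rather than some pointwise value. This relies crucially on the flow-invariance encoded in Proposition \ref{P: loop} together with the ability to rotate the basepoint of the corrector freely along $\pl_i\gO$. A secondary technical point is the uniform bound on $|Dw|$ in the tubular neighborhood, which depends on the regularity of the $(H,\theta)$-coordinates and is available precisely because the level sets in question lie away from the critical value $0$.
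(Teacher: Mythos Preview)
Your interior argument via the perturbed test function method is correct and matches the paper's approach (the paper packages the corrector construction into Lemma~\ref{L: exist1}, but your explicit flow-primitive construction is exactly what underlies it). The supersolution boundary argument also works as you sketch it, and this is how the paper proceeds: when $v_i^-(h_i)<\min_{\pl_i\gO}g$, the inequality $(u_k)_*(y_k)<\min_{\pl_i\gO}g\le g(y_k)$ forces the Hamilton--Jacobi alternative in the boundary condition for $u_k$, regardless of where $y_k$ sits on $\pl_i\gO$.

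The gap is in your subsolution boundary argument. You claim that ``the freedom to translate the basepoint of $w$ along the periodic orbit in $\pl_i\gO$ allows one to place $x^\ep$ near an arbitrary prescribed point of $\pl_i\gO$.'' This is false: two correctors built from different basepoints on the same orbit differ by an additive constant (the integral of $R$ between the two basepoints), so $u^\ep-\psi^\ep$ is unchanged up to a constant and the maximizer $x^\ep$ does not move. More generally, any two solutions of the cell equation $-b\cdot Dw+R=0$ on the tubular neighborhood differ by a function of $H$ alone, which is absorbed into $\phi\circ H$ and again cannot steer $x^\ep$ tangentially along the orbit. Consequently, from $u^\ep(x^\ep)\le g(x^\ep)$ you only get $v_i^+(h_i)\le g(x^*)$ at whatever limit point $x^*\in\pl_i\gO$ the sequence happens to produce, and there is no mechanism in your argument to force $x^*$ to be a minimizer of $g$.

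The paper resolves this asymmetry by abandoning the perturbed test function method for the subsolution boundary condition and proving $v_i^+(h_i)\le\min_{\pl_i\gO}g$ directly (Lemma~\ref{L: Ch2}). The proof first uses the coercivity (G5) to show that every $u\in\cS_\ep$ satisfies the Dirichlet condition classically, $u^*\le g$ on $\pl\gO$ (Lemma~\ref{L: visco1}), and then runs a characteristic of the form $\dot Z=\ep^{-1}b(Z)+\nu DH(Z)/|DH(Z)|$, launched from the \emph{minimizer} $z$ of $g$ on $\pl_i\gO$, backwards into $\gO_i$; combined with the near-constancy of $u^*$ on interior level curves (Lemma~\ref{L: loop}), this transports the value $g(z)=\min_{\pl_i\gO}g$ inward and yields the desired inequality in the limit. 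The point is that this argument lets you \emph{choose} the boundary point, whereas the test function method does not.
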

\begin{thm} \label{T: Cl2} For any $i\in\cI_0$,   
\[
v^+_i(0) = v^-_i(0) = d(u_0,\ldots,u_{N-1}),
\]
where $(u_0,\ldots,u_{N-1})$ is the maximal viscosity solution of \eqref{HJ}. 
\end{thm}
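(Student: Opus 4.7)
The idea is to squeeze $v_i^\pm(0)$ to $d := d(u_0, \ldots, u_{N-1})$: one establishes a common value $d^+$ for $v_i^+(0)$ with $d^+ \leq d$ using maximality of $(u_0, \ldots, u_{N-1})$, and proves the matching lower bound $d \leq v_i^-(0)$ for every $i$. Since the pointwise inequality $v^- \leq v^+$ at points of $c_i(0)$ forces $v_i^-(0) \leq v_i^+(0)$, one then gets $d \leq v_i^-(0) \leq v_i^+(0) = d^+ \leq d$, so all quantities equal $d$.

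First I would show $v^+ \equiv v_i^+(0)$ on each loop $c_i(0)$. By Proposition~\ref{P: loop}, $v^+(y) = v_i^+(H(y))$ for $y \in \gO_i$, and by Lemma~\ref{L: PS1} the function $v_i^+$ extends continuously to $\bar J_i$. For $x \in c_i(0)$ and $y_n \in \gO_i$ with $y_n \to x$, $v^+(y_n) = v_i^+(H(y_n)) \to v_i^+(0)$; upper semicontinuity of $v^+$ combined with $v_i^+(0) = \max_{c_i(0)} v^+$ forces $v^+ \equiv v_i^+(0)$ on $c_i(0)$. Because every loop contains the origin and $c_0(0) = \bigcup_{i \in \cI_1} c_i(0)$, the values $v_i^+(0)$ coincide for every $i$; denote the common value $d^+$. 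Theorem~\ref{T: Cl1}, Lemma~\ref{L: PS1}, Proposition~\ref{P: EB} and (NC) at level $d^+$ together place $(v_0^+, \ldots, v_{N-1}^+)$ in the class $\cS_C^-$ from the proof of part (i) of Theorem~\ref{T: main}; maximality of $(u_0, \ldots, u_{N-1})$ as the pointwise supremum over $\cS_C^-$ then gives $v_i^+ \leq u_i$ on $\bar J_i$ and hence $d^+ \leq d$.

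The main obstacle is the lower bound $d \leq v_i^-(0)$. Direct application of the comparison principle (Lemma~\ref{L: limCP}) to $u_i$ (subsolution) and $v_i^-$ (supersolution) needs precisely the inequality $u_i(0) \leq v_i^-(0)$ that we wish to prove, so extra input is required. I would argue by contradiction: suppose $v_i^-(0) < d$ for some $i$ and set $\gd := d - v_i^-(0) > 0$. Then $w_i := u_i - \gd$ is a continuous subsolution of (HJ$_i$) and (BC$_i$) with $w_i(0) = v_i^-(0)$, so Lemma~\ref{L: limCP} yields $u_i - \gd \leq v_i^-$ on $\bar J_i$, which by Proposition~\ref{P: loop} translates into $u(x) - \gd \leq v^-(x)$ on every level set $c_i(h)$ with $h \in J_i$. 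The contradiction must come from a boundary-layer analysis at the critical point $0$: using the degeneracy of $DH$ described in (H3), the convergence $\ol G_i(h,q) \to G(0,0)$ from Lemma~\ref{L: G0}, and the coercivity of $-\ep^{-1} b(x) \cdot p + G(x, p)$ in the direction $DH(x)$, one would construct from the family $u^\ep \in \cS_\ep$ a sequence of points $x_n \to 0$ along which $v^-(x_n) \to d$, violating the bound $v^-(x_n) \leq u(x_n) - \gd \to d - \gd$. This node-level analysis — the precise mechanism by which the averaging limit selects the maximal solution rather than a smaller one — is the heart of the matter; once it yields $d \leq v_i^-(0)$ for all $i$, assembling with $d^+ \leq d$ and $v_i^-(0) \leq v_i^+(0) = d^+$ completes the proof.
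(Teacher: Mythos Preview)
Your overall squeeze strategy matches the paper's, but both halves of the execution have genuine gaps.

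\textbf{Upper half.} The step ``upper semicontinuity of $v^+$ combined with $v_i^+(0) = \max_{c_i(0)} v^+$ forces $v^+ \equiv v_i^+(0)$ on $c_i(0)$'' conflates two different objects: the value $\max_{c_i(0)} v^+$ from~\eqref{E: 4.2} and the continuous-extension value $d(v_i^+):=\lim_{J_i\ni h\to 0}v_i^+(h)$ produced by Lemma~\ref{L: PS1}. Approaching $x\in c_i(0)$ from inside $\gO_i$ and using upper semicontinuity yields only $d(v_i^+)\leq v^+(x)$; the reverse inequality $v^+(x)\leq d(v_i^+)$ is exactly what needs proof, and without it you get neither constancy of $v^+$ on $c_i(0)$ nor the matching condition $d(v_0^+)=\cdots=d(v_{N-1}^+)$ required to invoke maximality. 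The paper spends real work here: Lemma~\ref{L: max3} builds a viscosity supersolution on $\gO(\gd)$ (using the degeneracy estimates of Lemmas~\ref{L: max9} and~\ref{L: mH}) to force $\max_{c_0(0)}v^+\leq d(v_0^+)$, and Lemma~\ref{L: max4} uses a trajectory argument to get $d(v_0^+)\leq d(v_i^+)$; only after Corollary~\ref{C: max3} assembles these is (NC) available for $(v_0^+,\ldots,v_{N-1}^+)$.

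\textbf{Lower half.} Your contradiction sketch has the inequality reversed: from $u_i-\gd\leq v_i^-$ you obtain $v^-(x)\geq u(x)-\gd$, not $\leq$, so a sequence $x_n\to 0$ with $v^-(x_n)\to d$ would satisfy $d\geq d-\gd$ and contradict nothing. More fundamentally, to prove $\min_{c_0(0)}v^-\geq d$ one must bound $v^-$ from below at \emph{every} point of $c_0(0)$, not along one sequence. The paper's mechanism is different: assuming $\gk:=\min_{c_0(0)}v^-<d$, it first gets $\gl\gk+G(0,0)<0$ from Lemma~\ref{L: G0}, then builds piecewise one-dimensional subsolutions $w_i$ with $w_i(0)=\gk$ so that comparison with $v_i^-$ yields $u_*>\gd^2+\gk-\mu$ on $\pl\gO(\gd)$ for small $\ep$. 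The decisive step you are missing is Lemma~\ref{L: key}, which furnishes $\psi\in C^1(\gO(\gd))$ with $-b\cdot D\psi+G(x,0)<G(0,0)+\gd$; this makes $z^\ep:=a+\ep\psi$ (with $\gk<a<\gd^2+\gk-\mu$) a subsolution of \eqref{E: epHJ} on $\gO(\gd)$, and comparison against $u_*$ then forces $a\leq v^-$ on $c_0(0)$, contradicting $\gk<a$.
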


Once these three propositions are in hand, the completion of the proof 
of (ii) of Theorem \ref{T: main} is easily done as follows.

\begin{proof}[Proof of (ii) of Theorem \ref{T: main}] By the definition of $v_i^\pm$, we have $v_i^-(h)\leq v_i^+(h)$ for all $h\in\bar J_i$ and $i\in\cI_0$. Hence, we deduce  by Theorem \ref{T: Cl2} and the semicontinuities of $v_i^\pm$ that the functions $v_i^+$ and $v_i^-$ are continuous at $h=0$. Now,  
since $v_i^+(0)=v_i^-(0)=u_i(0)$ by Theorem \ref{T: Cl2},  
Lemma \ref{L: limCP} ensures that $v_i^+\leq u_i\leq v_i^-$ on $\bar J_i$ for all $i\in\cI_0$,
which implies that $v_i^+=v_i^-=u_i$ on $\bar J_i$ for all $i\in\cI_0$. 
By the standard compactness argument together with Proposition \ref{P: loop}, we conclude that for any compact subset 
$K$ of $\gO$, we have 
\[
\lim_{\ep\to 0+}\sup\{\|u-w\|_{\infty,K}\mid w\in\cS_\ep\}=0. \qedhere
\]
\end{proof} 

We remark that the proof above shows that 
\[
\lim_{\ep\to 0+}\sup\{\|(w-u)_-\|_{\infty,\ol\gO}\mid w\in\cS_\ep \}=0,
\]
where $a_-$ denotes the negative part $\max\{0,-a\}$ for $a\in\R$.

It remains to prove Proposition \ref{P: loop}, Theorems \ref{T: Cl1}, and \ref{T: Cl2}, and we give the proof of Proposition \ref{P: loop}, Theorems \ref{T: Cl1}, 
and \ref{T: Cl2}, respectively, in this section, Sections \ref{S: visco}, and \ref{S: maximal}.

We consider the Hamiltonian flow associated with the Hamiltonian $H$: 
\begin{equation} \label{E: HS}
\dot X(t) = b(X(t)) \ \ \  \text{ and } \ \ \  X(0) = x \in \R^2,
\end{equation}
and write $X(t, x)$ for the solution of \eqref{E: HS}, which has a basic property: 
\begin{equation*}
H(X(t, x)) = H(x) \ \ \  \text{ for all } (t, x) \in \R \tim \R^2.
\end{equation*}
In particular, if $x\in c_i(h)$, with $h\in \bar J_i$ and $i\in\cI_0$, then 
\[
X(t,x)\in c_i(h) \ \ \ \text{ for } t\in\R.
\]
It follows from (H1) and (H2) that the curve $c_i(h)$ is $C^1$-diffeomorphic to circle $S^1$ 
for any $h\in\bar J_i\setminus\{0\}$ and $i\in\cI_0$.   
Moreover,  if $h\in \bar J_i\setminus\{0\}$ and $i\in\cI_0$, then 
$b(x)\not=0$ for all $x\in c_i(h)$ 
and $t\mapsto X(t,x)$ has a finite period for any $x\in c_i(h)$. 
Let $x\in c_i(h)$, with $i\in\cI_0$ and $h\in\bar J\i\setminus\{0\}$ 
and let $\tau_i>0$ denote the minimal period of $t\mapsto X(t,x)$. Observe that 
\[
\tau_i=\int_0^{\tau_i}dt=\int_0^{\tau_i} \frac{|\dot X(t,x)|\,dt}{|DH(X(t,x))|}
=\int_{c_i(h)} \frac{dl}{|DH(x)|}=T_i(h).  
\]   
Thus, if $h\in \bar J_i\setminus\{0\}$, with $i\in\cI_0$, and if $x\in c_i(h)$, then  
$T_i(h)$ equals to the minimal period of $t\mapsto X(t,x)$.

We note here that $\ol G_i$ can be rewritten as
\begin{equation*}
\ol G_i(h, q) = \frac{1}{T_i(h)}\int_0^{T_i(h)} G(X(t, x), qDH(X(t, x))) \, dt\ \ \
 \text{ if } \ h\not=0, 
\end{equation*} 
where $x \in c_i(h)$ is an arbitrary point. 
This representation says that $\ol G_i(h, q)$ is the average value of the periodic function 
$t\mapsto G(X(t,x),qDH(X(t,x)))$ for $x\in c_i(h)$ over the period $T_i(h)$.

\begin{proof}[Proof of Proposition \ref{P: loop}] We see immediately that  
$v^+$ and $v^-$ are a viscosity sub- and supersolution of 
\begin{equation} \label{E: transport}
-b \cdot Du =0 \ \ \  \In \gO,
\end{equation}
which, moreover, implies that $-v^-$ is a viscosity subsolution of \eqref{E: transport}. 
This observation ensures together with Proposition \ref{app1} in the appendix (or \cite[Theorem I.14]{CrLi83}) that for any $x\in\gO$, the functions $t\mapsto v^+(X(t,x))$ and $t\mapsto -v^-(X(t,x))$ are nondecreasing in $\R$. Hence, by the periodicity of $t\mapsto X(t,x)$, with $x\in c_i(h)$, $h\in J_i$, and $i\in\cI_0$, we infer that the functions $v^+$ and $v^-$ are constant on $c_i(h)$ for $h\in J_i,\,i\in\cI_0$.   
It is now clear that \eqref{E: loop1} holds. 
\end{proof}

\section{Viscosity properties of the functions $v_i^+$ and $v_i^-$} \label{S: visco}

We prove Theorem \ref{T: Cl1} in this section.  

Let $M$ be the positive constant from (G5), and in view of Proposition 
\ref{P: EB}, we define a positive number $C_M$ by 
\begin{equation} \label{E: C_M}
C_M=\max\{M,\, \sup\{\|u\|_{\infty,\ol\gO} \mid \, u\in \cS_\ep, \,\ep>0\}\}. 
\end{equation}

The next lemma is a quantitative version of Proposition \ref{P: loop}. 

\begin{lem} \label{L: loop} There is a constant $C>0$ such that 
for any $\ep>0$, $u\in\cS_\ep$, $i\in\cI_0$, and $h\in J_i$, 
\[
|u^*(x)-u^*(y)|\leq \ep C T_i(h) \ \ \text{ for all } x,y\in c_i(h).
\]
\end{lem}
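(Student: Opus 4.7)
The plan is to reduce the lemma to a one-sided differential inequality along the Hamiltonian flow that can be propagated by an appendix proposition, and then exploit the $T_i(h)$-periodicity of the flow on $c_i(h)$ to upgrade the one-sided bound to a two-sided one.

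First I would exploit \emph{(G5)} together with the uniform bound \eqref{E: C_M} to absorb the gradient term. Fix $\ep>0$ and $u\in\cS_\ep$. Suppose $\phi\in C^1(\gO)$ and $u^*-\phi$ attains a local maximum at $z\in\gO$. The subsolution inequality reads
\[
\gl u^*(z) - \frac{1}{\ep}\,b(z)\cdot D\phi(z) + G(z,D\phi(z)) \leq 0.
\]
Since $G(z,D\phi(z)) \geq \nu|D\phi(z)|-M \geq -M$ by \emph{(G5)} and $u^*(z) \geq -C_M$ by \eqref{E: C_M}, setting $K:=\gl C_M + M$ yields $-b(z)\cdot D\phi(z) \leq \ep K$. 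Hence $u^*$ is a viscosity subsolution in $\gO$ of the linear transport inequality $-b\cdot Du \leq \ep K$ with constant right-hand side. This is the same structure that was used to derive monotonicity of $v^+$ along the flow in the proof of Proposition \ref{P: loop}, so Proposition \ref{app1} in the appendix (or \cite[Theorem I.14]{CrLi83}) applies and gives
\[
u^*(X(T,x)) \geq u^*(x) - \ep K\, T \qquad \text{for all } T\geq 0,\; x\in\gO,
\]
provided the orbit remains in $\gO$; the orbit does remain in $\gO$ because it stays on the level set $c_i(h)\subset\gO$ whenever $h\in J_i$.

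Next I would use periodicity to obtain both sides of the estimate from this one-sided bound. Fix $i\in\cI_0$, $h\in J_i$ and $x,y\in c_i(h)$. Because $T_i(h)$ equals the minimal period of $t\mapsto X(t,x)$, there is $t\in[0,T_i(h)]$ with $y=X(t,x)$, and simultaneously $x=X(T_i(h)-t, y)$. Applying the propagated inequality to each of these two representations gives
\[
u^*(x) - u^*(y) \leq \ep K\,t, \qquad u^*(y) - u^*(x) \leq \ep K\,(T_i(h)-t).
\]
Both right-hand sides are at most $\ep K\, T_i(h)$, which yields $|u^*(x)-u^*(y)|\leq \ep K\, T_i(h)$ with $C:=K=\gl C_M + M$, independent of $\ep$, $u$, $i$ and $h$.

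I do not expect a serious obstacle: the coercivity \emph{(G5)} is pointwise in $p$ and therefore applies to the test-function gradient $D\phi(z)$ without needing any a priori bound on $Du^\ep$, and the rest is a direct invocation of the appendix proposition plus the periodicity of the Hamiltonian flow on $c_i(h)$. The only point that requires mild care is the symmetry step: one must verify that $y=X(t,x)$ can be inverted as $x=X(T_i(h)-t,y)$, which is immediate from the group property of the flow and the fact that $T_i(h)$ is a period.
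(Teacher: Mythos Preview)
Your proof is correct and follows essentially the same route as the paper: use (G5) to reduce to a linear transport inequality, invoke Proposition~\ref{app1} along the Hamiltonian flow, and conclude by periodicity and symmetry. The only cosmetic difference is that the paper keeps the $\gl u^*$ term and applies Proposition~\ref{app1} to the rescaled flow $Y(t)=X(\ep^{-1}t,x)$, which produces an exponential factor $e^{-\gl t}$ that is then estimated, whereas you first absorb $\gl u^*\geq -\gl C_M$ into the right-hand side and work directly with $-b\cdot Du\leq \ep K$ and the original flow; this yields a slightly cleaner constant $C=\gl C_M+M$ versus the paper's $2C_M(\gl+1)$.
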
 

\begin{proof} Fix any $\ep>0$, $u\in\cS_\ep$, $i\in\cI_0$, $h\in J_i$, and $x,y\in c_i(h)$.  
The trajectory $t\mapsto X(t,x)$ stays in $c_i(h)$ and for some $\tau\in(0, 2T_i(h)]$, 
it meets $y$ at $t=\tau$, that is, $X(\tau, x)=y$.  Since $u^*$ is a  
viscosity subsolution of \ $\gl u^*-\ep^{-1}\,b\cdot Du^*-M=0$ \ in $\gO$\ 
by (G5), and $Y(t)=X(\ep^{-1} t,x)$ satisfies
\[
\dot Y(t)=\frac{1}{\ep}\, b(Y(t)) \ \ \text{ for all } t\in\R,
\]
we deduce by Proposition \ref{app1} that 
\[\bald
u^*(x)&\,\leq e^{-\ep \gl \tau}u^*(Y(\ep \tau))+\int_0^{\ep\tau} e^{-\gl t} M\,dt 
\\&\,\leq u^*(y) +C_M(1-e^{-\ep \gl\tau}) +M\ep \tau\leq u^*(y)+ 
2\ep C_M(\gl+1)T_i(h).  
\eald\]
Thus, by the symmetry in $x$ and $y$, we obtain 
\[
|u^*(x)-u^*(y)|\leq 2\ep C_M(\gl+1) T_i(h). \qedhere
\]
\end{proof} 

\begin{lem} \label{L: visco1} For any $\ep>0$, $u\in\cS_\ep$, and $y\in\pl\gO$, 
we have \ $u^*(y)\leq g(y)$. 
\end{lem}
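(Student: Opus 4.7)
The plan is by contradiction: suppose there exist $\ep>0$, $u\in\cS_\ep$, and $y\in\pl\gO$ with $u^*(y)>g(y)$, and let $i\in\cI_0$ be the unique index with $y\in\pl_i\gO$ (uniqueness uses that the $h_i$ are distinct, since $h_0>0$ and $h_j<0$ for $j\ge 1$). A neighborhood of $y$ in $\ol\gO$ is contained in $\ol\gO_i$, and no critical point of $H$ lies on $\pl\gO$ (the critical values $H(z_0)=0$ and $H(z_j)<h_j$ do not place any $z_j$ in $\pl_k\gO$), so $c_0:=|DH(y)|>0$. The strategy is to construct a $C^1$ test function $\phi_q$ depending on a large parameter $q>0$ so that the maximizer $x_q$ of $u^*-\phi_q$ over $\ol B(y,r)\cap\ol\gO$ converges to $y$, and so that the Hamiltonian expression at $x_q$ tends to $+\infty$; this will rule out the first alternative in the boundary subsolution condition and force $u^*(x_q)\le g(x_q)$, which in the limit yields $u^*(y)\le g(y)$, contradicting the hypothesis.

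The key construction is
\[
\phi_q(x):=q\sigma_i(H(x)-h_i)+q|x-y|^2,\qquad\sigma_i:=\begin{cases}-1&i=0,\\+1&i\ge 1.\end{cases}
\]
The sign $\sigma_i$ is chosen so that $\sigma_i(H(x)-h_i)\ge 0$ on $\ol\gO_i$, whence $\phi_q(x)\ge q|x-y|^2\ge 0$ with equality at $y$. The decisive feature is $D\phi_q(y)=q\sigma_i DH(y)$: since $b=(H_{x_2},-H_{x_1})$ is orthogonal to $DH$, the leading contribution to $b\cdot D\phi_q$ at $y$ vanishes, allowing the coercivity (G5) to dominate. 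Fix $r>0$ small enough that $|DH(x)|\ge c_0/2$ on $\ol B(y,r)\cap\ol\gO\subset\ol\gO_i$, and let $x_q$ be a maximizer of the bounded USC function $u^*-\phi_q$ on the compact set $\ol B(y,r)\cap\ol\gO$. Using $\phi_q(x_q)-\phi_q(y)\le u^*(x_q)-u^*(y)\le 2C_M$ (with $C_M$ from \eqref{E: C_M}) gives $q|x_q-y|^2\le 2C_M$, so $|x_q-y|\le\sqrt{2C_M/q}\to 0$; for $q$ large, $x_q\in B(y,r)$ and is therefore a genuine local maximum of $u^*-\phi_q$ in $\ol\gO$.

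At $x_q$ one has $D\phi_q(x_q)=q\sigma_i DH(x_q)+2q(x_q-y)$, so $b\perp DH$ gives $|b(x_q)\cdot D\phi_q(x_q)|=2q|b(x_q)\cdot(x_q-y)|=O(\sqrt{q})$, while $|D\phi_q(x_q)|\ge q|DH(x_q)|-2q|x_q-y|\ge qc_0/4$ for $q$ large; by (G5), $G(x_q,D\phi_q(x_q))\ge\nu qc_0/4-M$. Consequently
\[
\gl u^*(x_q)-\ep^{-1}b(x_q)\cdot D\phi_q(x_q)+G(x_q,D\phi_q(x_q))\ \longrightarrow\ +\infty
\]
as $q\to\infty$. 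If $x_q\in\gO$, the interior viscosity subsolution inequality says this expression is $\le 0$, a contradiction; hence $x_q\in\pl\gO$, and the boundary subsolution condition collapses to $u^*(x_q)\le g(x_q)$. Combined with $u^*(x_q)\ge u^*(y)$ (from $\phi_q(x_q)\ge\phi_q(y)$) and continuity of $g$, sending $q\to\infty$ gives $u^*(y)\le g(y)$, the required contradiction. The main obstacle is the $\ep^{-1}$-scaling of the drift: the alignment $D\phi_q(y)\parallel DH(y)$ together with the matched coefficient $q$ in the quadratic penalty is what keeps the residual drift at $x_q$ only of order $\sqrt q$ while forcing $G$ to grow linearly in $q$.
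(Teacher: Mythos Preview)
Your proof is correct and uses essentially the same approach as the paper: both exploit $b\perp DH$ via a test function with a term proportional to $H(x)-h_i$ plus a quadratic localization, then invoke (G5) to force the boundary alternative $u^*(x_q)\le g(x_q)$. The only difference is that the paper decouples the two penalty coefficients (writing $\phi_{\ga,\gb}(x)=\ga|x-y|^2+\gb|H(x)-h_i|$ and choosing $\gb=\gb(\ga)$ large for each fixed $\ga$), whereas you tie them to a single parameter $q$ and use the quantitative estimate $|x_q-y|=O(q^{-1/2})$ to show the drift residue $O(\sqrt q)$ is dominated by the coercive term $O(q)$.
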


\begin{proof} Fix any $\ep>0$, $u\in\cS_\ep$ and $y\in\pl\gO$. Choose $i\in\cI_0$ so that $y\in\pl_i\gO=c_i(h_i)$. 
Note by (G5) that $u^*$ is a viscosity subsolution of 
\begin{equation}\label{E: convex}
\gl u^*-\ep^{-1}\,b\cdot Du^*+\nu |Du^*|-M=0 \ \ \IN \gO \ \ \text{ and } \ \ u^*=g \ \ \ON\pl\gO,
\end{equation}

For $\ga>0$ and $\gb>0$, we set 
\[
\phi_{\ga,\gb}(x)=\ga|x-y|^2+\gb|H(x)-h_i| \ \ \text{ for } x\in \ol \gO_i.
\]
Let $x_{\ga,\gb}\in\ol\gO_i$ be a maximum point of the function $u^*-\phi_{\ga,\gb}$ on $\ol\gO_i$. It is easily seen that
\[
\lim_{\ga,\gb\to \infty}x_{\ga,\gb}=y \ \ \text{ and } \ \ \lim_{\ga,\gb\to\infty}u^*(x_{\ga,\gb})=u^*(y).
\]

Fix $r>0$ so that $\dist(B_r(y), c_i(0))>0$ and hence, 
$\inf_{B_r(y)\cap \gO_i}|DH|>0$. 
We fix $\ga_0>0$ so that if $\ga,\gb\in(\ga_0,\infty)$, then $x_{\ga,\gb}\in B_r(y)$.  

For $x\in B_r(y)\cap \ol\gO_i$, we compute that  
\[\bald
\gl u^*(x)&-\ep^{-1}\,b(x)\cdot D\phi_{\ga,\gb}(x)+\nu|D\phi_{\ga,\gb}(x)|-M
\\&\,
=\gl u^*(x)-\ep^{-1}\,b(x)\cdot\Big(2\ga(x-y)+\gb\,\frac{H(x)-h_i}{|H(x)-h_i|}\,DH(x)\Big)
\\&+\nu\Big|2\ga(x-y)+\gb \,\frac{H(x)-h_i}{|H(x)-h_i|}\,DH(x)\Big|-M
\\&\,
=\gl u^*(x)-2\ga\,\ep^{-1}\,b(x)\cdot (x-y)
+\nu\Big|2\ga(x-y)+\gb \,\frac{H(x)-h_i}{|H(x)-h_i|}\,DH(x)\Big|-M
\\&\,=\gl u^*(x)-2\ga\,\ep^{-1}\, r\|b\|_{\infty,\ol\gO}
+\gb\nu \inf_{B_r(y)\cap \gO_i}|DH| -2\ga \nu r -M.
\eald \]
Hence, for any $\ga>\ga_0$, we may choose $\gb=\gb(\ga)>\ga$ so that 
\[
\gl u^*(x_{\ga,\gb})-\ep^{-1}\,b(x_{\ga,\gb})\cdot D\phi_{\ga,\gb}(x_{\ga,\gb})
+\nu|D\phi_{\ga,\gb}(x_{\ga,\gb})|-M>0. 
\] 
Now, we deduce from \eqref{E: convex} that for any $\ga>\ga_0$,  
\[
x_{\ga,\gb(\ga)}\in \pl_i\gO \ \ \ \text{ and }  \ \ \ u^*(x_{\ga,\gb(\ga)})
\leq g(x_{\ga,\gb(\ga)}).
\] 
Sending $\ga\to\infty$, we conclude that $u^*(y)\leq g(y)$. 
\end{proof}

\begin{lem} \label{L: Ch2} For every $i\in\cI_0$,
\begin{equation} \label{E: Ch2}
v_i^+(h_i) \leq \min_{\pl_i\gO} g. 
\end{equation}
\end{lem}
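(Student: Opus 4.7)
The plan is to fix a minimizer $y^* \in \pl_i\gO$ of $g$ (so that $g(y^*) = \min_{\pl_i \gO} g$) and to establish $v^+(y) \leq g(y^*)$ at every $y \in \pl_i \gO$. Combined with Proposition \ref{P: loop} and the upper semicontinuity of $v^+$, this will yield \eqref{E: Ch2}.

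First, I would combine Lemma \ref{L: visco1}, which gives $u^*(y^*) \leq g(y^*)$ for each $u \in \cS_\ep$, with the loop estimate from Lemma \ref{L: loop}, whose argument adapts to the limiting level set $\pl_i\gO = c_i(h_i)$ (the Hamiltonian orbit there is periodic with finite period $T_i(h_i)$ since $h_i \neq 0$) via the Proposition of the appendix. This would produce the uniform-in-$\ep$ estimate
\[
u^*(y) \leq g(y^*) + \ep C\, T_i(h_i) \quad \text{for all } y \in \pl_i \gO \text{ and } u \in \cS_\ep.
\]

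Next, I would propagate this bound to $\ol\gO_i$ using the smooth test function $\Phi(x) := K|H(x) - h_i|$, which is smooth on $\ol\gO_i$ since $\gO_i$ lies on one side of the level set $\{H = h_i\}$. Because $D\Phi$ is a scalar multiple of $DH$, we have $b \cdot D\Phi = 0$ and $|D\Phi| = K|DH|$. Using (G5), $u^*$ is a viscosity subsolution of $\gl u^* - \ep^{-1} b \cdot Du^* + \nu|Du^*| - M \leq 0$, so at any maximum $x_0 \in \gO$ of $u^* - \Phi$ on $\ol\gO_i$ I would obtain $\gl u^*(x_0) + \nu K|DH(x_0)| \leq M$. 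Partitioning $\ol\gO_i$ into a small neighborhood $B_\eta(0)$ of the unique critical point $0 \in \pl D_i$, where $\Phi \geq K|h_i|/2$ overwhelms the uniform bound $|u^*| \leq C_M$, and its complement, where $|DH|$ is bounded below so the displayed inequality forces $u^*(x_0)$ to be very negative, one can choose $K$ independent of $\ep$ so that every maximum of $u^* - \Phi$ must lie on $\pl_i\gO$. Combined with the first step this gives
\[
u^*(x) \leq K|H(x) - h_i| + g(y^*) + \ep C T_i(h_i) \quad \text{for every } x \in \ol\gO_i.
\]

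Finally, passing to the half-relaxed limit $\ep \to 0$ yields $v^+(x) \leq K|H(x) - h_i| + g(y^*)$ on $\ol\gO_i$, hence $v^+(y) \leq g(y^*) = \min_{\pl_i\gO} g$ for every $y \in \pl_i\gO$. Fixing any such $y$, I would pick $h_n \in J_i$ with $h_n \to h_i$ and $v_i^+(h_n) \to v_i^+(h_i)$, and $x_n \in c_i(h_n)$ tending to $y$ (available by continuity of the level curves since $y \in c_i(h_i)$). Proposition \ref{P: loop} gives $v^+(x_n) = v_i^+(h_n)$, and the upper semicontinuity of $v^+$ at $y$ then yields $v_i^+(h_i) = \limsup_n v_i^+(h_n) = \limsup_n v^+(x_n) \leq v^+(y) \leq \min_{\pl_i\gO} g$, which is \eqref{E: Ch2}. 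The principal difficulty is the test-function step: choosing $K$ independently of $\ep$ to rule out maxima of $u^* - \Phi$ in the interior of $\gO_i$, particularly near the critical point $0$ on $\pl D_i$ where $|DH|$ degenerates.
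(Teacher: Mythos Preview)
Your step 1—adapting the loop estimate of Lemma \ref{L: loop} to the boundary curve $c_i(h_i)=\pl_i\gO$—does not go through. The proof of Lemma \ref{L: loop} applies Proposition \ref{app1} along the Hamiltonian trajectory, and Proposition \ref{app1} requires $u^*$ to be a viscosity subsolution of the transport-type inequality in an \emph{open} set containing the trajectory. For $h\in J_i$ this is fine since $c_i(h)\subset\gO$; but $c_i(h_i)\subset\pl\gO$, and on $\pl\gO$ the only information available is the viscosity Dirichlet condition, which (even after Lemma \ref{L: visco1}) does not yield the differential inequality $\gl u^*-\ep^{-1}b\cdot Du^*-M\le 0$ needed to run Proposition \ref{app1}. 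Consequently the bound $u^*(y)\le g(y^*)+\ep C\,T_i(h_i)$ for all $y\in\pl_i\gO$ is not justified, and without it your test-function step only gives $u^*(x)\le K|H(x)-h_i|+u^*(y_0)$ with $y_0\in\pl_i\gO$ depending on $u$ and $\ep$; Lemma \ref{L: visco1} then yields $u^*(y_0)\le g(y_0)\le\max_{\pl_i\gO}g$, not the minimum.

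The paper circumvents this boundary obstruction by a different device: it fixes the minimizer $z\in\pl_i\gO$ and runs the \emph{modified} flow $\dot Z=\ep_k^{-1}b(Z)+\nu F(Z)$, with $F=DH/|DH|$, from $z$. Since $\frac{d}{dt}H(Z)=\nu|DH(Z)|>0$, the trajectory leaves $\pl_i\gO$ immediately and stays in $\gO$, so Proposition \ref{app1} applies on the open interval $(\gs,0)$, with a limit $t\to 0-$ (using only upper semicontinuity of $u_k^*$) to reach $z$. This connects any interior level $c_i(h)$ directly to the single boundary point $z$; then Lemma \ref{L: loop} (at the interior level) and Lemma \ref{L: visco1} (at $z$) give $v_i^+(h)\le g(z)+O(|h-h_i|)$. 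The normal drift component $\nu F$ is the key idea; a purely tangential loop argument on $\pl_i\gO$ cannot substitute for it.

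A secondary imprecision in your step 2: the partition by $B_\eta(0)$ versus its complement leaves the points of $c_i(0)\setminus B_\eta(0)$ unhandled—these lie on $\pl\gO_i$, so the interior subsolution inequality is unavailable there. The value argument you invoke near $0$ (that $\Phi=K|H-h_i|\ge K|h_i|/2$ overwhelms $|u^*|\le C_M$) in fact works on the whole strip $\{x\in\ol\gO_i:|H(x)|<|h_i|/2\}$, which contains all of $c_i(0)$; partitioning by $|H|$ rather than by distance to the origin repairs this.
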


\begin{proof} 
We give the proof of \eqref{E: Ch2} only for $i=0$ since we can prove the others similarly.

Fix any $h\in J_0$ and $y\in c_0(h)$. By Proposition \ref{P: loop}, we have 
$v_0^+(h)=v^+(y)$.  We select sequences of $\ep_k>0$, $y_k\in\gO_0$, and 
$u_k\in \cS_{\ep_k}$, with 
$k\in \N$, so that 
\[\lim_{k\to \infty} (\ep_k, y_k, u_k^*(y_k))=(0,y,v^+(y)).
\]
We set $\gam_k=H(y_k)\in J_0$ for $k\in\N$. 

Let $z \in \pl_0\gO$ be a minimum point of $g$ over $\pl_0\gO$, and fix $k\in\N$.  
Consider the initial value problem
\begin{equation} \label{E: Ch21}
\dot Z(t) =\frac{1}{\ep_k} \, b(Z(t)) + \nu \, F(Z(t)) \ \ \  \text{ and } \ \ \  
Z(0) = z,
\end{equation}
where $F(x):=DH(x)/|DH(x)|$.   
This problem has a unique solution $Z(t)$ 
as long as $Z(t)$ is away from any of critical points of $H$.  
Let $I$ be the maximal existence interval of the solution $Z(t)$. 

Note that
\begin{equation}\label{E: Ch22}
\frac{d}{dt}H(Z(t))=DH(Z(t))\cdot \dot Z(t)
=\nu\, |DH(Z(t))|>0 \ \ \text{ for all } t\in I,
\end{equation}
and hence the function $t\mapsto H(Z(t))$ is increasing in $I$. 
Since the origin is the only critical point of $H$ in $\ol\gO_0$ and $H(0)=0$, we deduce that there is $\gs\in I$, with $\gs<0$, such that 
$0<H(Z(\gs))=\gam_k$. Moreover, we have \ $Z(t)\in\gO_0$ for all 
$t\in(\gs,\,0)$. 

We may assume, by reselecting the sequence $\{(\ep_k,y_k,u_k)\}_{k\in\N}$ if necessary, that $\gam_k>h_0/2$ for all $k\in\N$. 
There exists a constant $\gd>0$ such that $|DH(x)|>\gd$ for all 
$x\in \gO_0$ satisfying $H(x)>h_0/2$.   
It follows from \eqref{E: Ch22} that 
\begin{equation} \label{E: Ch23}
h_0-\gam_k\geq \nu\gd |\gs|.  
\end{equation}

Note that $u_k^*$ is a viscosity subsolution of 
\[
\gl u_k^*-\Big(\frac{b}{\ep_k}+\nu F\Big)\cdot Du_k^*-M=0 \ \ \IN \gO. 
\]
Set $z_k=Z(\gs)$. By Proposition \ref{app1}, we obtain
\[
e^{-\gl \gs}u_k^*(z_k)\leq e^{-\gl t}u_k^*(Z(t))+\int_{\gs}^t e^{-\gl s} M \,ds \ \ \text{ for all } t\in (\gs,\, 0),  
\]
which implies, in the limit as $t\to 0-$, that 
\[
u_k^*(z_k)\leq e^{\gl \gs}\left(u_k^*(z)+M|\gs|\right)
\leq u_k^*(z)+C_M (1-e^{-\gl|\gs|})+M|\gs|.
\]
Combining this with Lemmas \ref{L: loop} and \ref{L: visco1}, we get 
\[
u_k^*(y_k)\leq \ep_kCT_0(\gam_k) + u_k^*(z_k) 
\leq \ep_k CT_0(\gam_k) +g(z)+(\gl C_M+M)|\gs|
\]
for some constant $C>0$, and moreover, by \eqref{E: Ch23},
\[
u_k^*(y_k)\leq \ep_k CT_0(\gam_k) +g(z)+(\gl C_M+M)\gd^{-1}(h_0-\gam_k),
\]
Sending $k\to \infty$ yields 
\[
v^+_0(h)=v^+(y)\leq g(z) + (\gl C_M+M)\gd^{-1}(h_0-h). 
\]
Consequently,
\[
v_0^+(h_0)=\limsup_{J_0\ni h\to h_0}v^+_0(h)\leq g(z)=\min_{\pl_i\gO}g. \qedhere
\]
\end{proof}

The next lemma is proved in the proof of \cite[Theorem 3.6]{Ku16}. 
For any $\ga<\gb$ and $i\in\cI_0$, we write $\gO_i(\ga,\gb)$ 
and $\ol\gO_i(\ga,\gb)$ for the sets 
$\{x\in \gO_i\mid \ga<H(x)<\gb\}$ and $\{x\in \ol\gO_i\mid \ga\leq H(x)\leq \gb\}$, respectively.

\begin{lem} \label{L: exist1} Let  $i\in\cI_0$, $h\in \bar J_i\setminus\{0\}$, and 
$q\in\R$.  For any $\gd>0$, there exist an interval $[\ga,\,\gb]\subset 
\bar J_i\setminus \{0\}$ and $\psi\in C^1(\ol\gO_i(\ga,\gb))$ such that 
$[\ga,\,\gb]$ is a neighborhood of $h$, relative to $\bar J_i\setminus\{0\}$, and 
\[   
\left|-b(x)\cdot D\psi(x)+G(x,qDH(x))-\ol G_i(H(x),q)\right|\leq \gd \ \ \text{ for all } 
x\in \ol\gO_i(\ga,\gb).
\]
\end{lem}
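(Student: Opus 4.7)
The plan is to construct an approximate corrector $\psi$ for the cell problem on a thin annular neighborhood of $c_i(h)$, by explicit integration along the Hamiltonian flow after first smoothing the inhomogeneity.

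Since $h \in \bar J_i \setminus \0$, choose a closed interval $[\ga, \gb] \subset \bar J_i \setminus \0$ that is a neighborhood of $h$ relative to $\bar J_i \setminus \0$. On the compact annulus $\ol\gO_i(\ga, \gb)$, $|DH|$ is bounded below by a positive constant, so $b$ is nowhere zero and the flow foliates the annulus by the simple closed orbits $c_i(h')$, with $C^1$ period $T_i(h')$ (Lemma \ref{L: PT}(i)). Fix a $C^2$ transverse section $h' \mapsto x_0(h') \in c_i(h')$ (for example, a short arc of the gradient flow of $H$ through a reference point in $c_i(h)$), so that $\Phi(\tau, h') := X(\tau, x_0(h'))$ is a $C^2$ local diffeomorphism from $\R \tim [\ga, \gb]$ onto $\ol\gO_i(\ga, \gb)$, periodic in $\tau$ with period $T_i(h')$. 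Its Jacobian is nondegenerate because $\pl_\tau \Phi = b$ is tangent to each level set while $\pl_{h'} \Phi$ satisfies $DH \cdot \pl_{h'}\Phi \equiv 1$ and is therefore transverse to it.

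Set $f(x) := G(x, qDH(x))$, a continuous function on $\ol\gO_i(\ga, \gb)$. By Tietze extension and mollification, choose $\til f \in C^1$ in a neighborhood of $\ol\gO_i(\ga, \gb)$ with $\|\til f - f\|_\infty \leq \gd/3$. Using $dl = |DH|\, dt$ along orbits, rewrite the effective Hamiltonian as
\[
\ol G_i(h', q) = \fr{1}{T_i(h')} \int_0^{T_i(h')} f(X(s, x_0(h'))) \, ds,
\]
and define the analogous orbit-average of $\til f$ by
\[
\bar F(h') := \fr{1}{T_i(h')} \int_0^{T_i(h')} \til f(X(s, x_0(h'))) \, ds.
\]
Then $|\bar F(h') - \ol G_i(h', q)| \leq \gd/3$, and $\bar F \in C^1([\ga, \gb])$ by smooth dependence of $X$, $T_i$, and $\til f$ on $h'$.

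Now define the lifted corrector
\[
\Psi(\tau, h') := \int_0^\tau \bigl[\til f(X(s, x_0(h'))) - \bar F(h')\bigr] \, ds,
\]
which is $C^1$ on $\R \tim [\ga, \gb]$ by differentiation under the integral sign. Since the integral of $\til f \circ X(\cdot, x_0(h'))$ over any interval of length $T_i(h')$ equals $T_i(h') \bar F(h')$, $\Psi$ is $T_i(h')$-periodic in $\tau$ and hence descends through $\Phi$ to a well-defined $\psi \in C^1(\ol\gO_i(\ga, \gb))$. Differentiating $\psi$ along an orbit gives $b(x) \cdot D\psi(x) = \pl_\tau \Psi(\tau, h') = \til f(x) - \bar F(H(x))$, so
\[
-b(x) \cdot D\psi(x) + G(x, qDH(x)) - \ol G_i(H(x), q) = (f - \til f)(x) + \bigl(\bar F - \ol G_i(\cdot, q)\bigr)(H(x)),
\]
whose absolute value is at most $2\gd/3 < \gd$ on $\ol\gO_i(\ga, \gb)$.

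The main technical obstacle is producing a genuinely $C^1$ corrector on the whole annulus. The exact corrector built from $f$ (without smoothing) is typically only Lipschitz in the transverse direction, so the regularization step is crucial for $C^1$-regularity in $h'$; choosing the subtracted constant to be the orbit-average $\bar F$ rather than $\ol G_i$ itself is what forces exact periodicity in $\tau$, making $\psi$ single-valued when descending from $\R \tim [\ga, \gb]$ to the annulus.
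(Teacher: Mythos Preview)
Your argument is correct and is essentially the standard corrector construction one expects here; the paper does not give its own proof but simply cites \cite[Theorem~3.6]{Ku16}, where the same integration-along-orbits idea is carried out. Two cosmetic points: since $H\in C^2$ yields only $b\in C^1$, the flow map $\Phi$ and the gradient-flow section $x_0$ are $C^1$ rather than $C^2$, but $C^1$ is all you use; and $\Phi$ is a surjective local $C^1$ diffeomorphism (a covering of the annulus), not a global diffeomorphism---your periodicity argument then shows that both $\Psi$ and $D\Psi\cdot(D\Phi)^{-1}$ descend consistently, giving $\psi\in C^1$.
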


\begin{proof}[Proof of Theorem \ref{T: Cl1}] We follow the proof of \cite[Theorem 3.6]{Ku16}, which is based on the perturbed test function method due to \cite{E89}. We show that $v_0^-$ is a viscosity supersolution of (HJ$_0$) and (BC$_0$). 
A parallel argument shows that $v_i^-$, with $i\in\cI_1$, is a viscosity supersolution 
of (HJ$_i$) and (BC$_i$), the detail of which we omit presenting here.    

Let $\phi\in C^1(\bar J_0\setminus\{0\})$ and assume that $v_0^--\phi$ has a strict minimum at $\hat h$. Since the treatment for the case when $\hat h<h_0$ is 
similar to and easier than the case when $\hat h=h_0$, we, henceforth, consider only the case when $\hat h=h_0$. 

We need to show that either 
\[
\gl v_0^-(\hat h)+\ol G_0(\hat h,\phi'(\hat h))\geq 0 \ \ \text{ or } \ \ 
v_0^-(\hat h)\geq \min_{\pl_0\gO}g. 
\]
For this, we suppose that 
\begin{equation}\label{E: Ch24}
v_0^-(\hat h)<\min_{\pl_0\gO} g, 
\end{equation}
and prove that 
\begin{equation}\label{E: Ch25}
\gl v_0^-(\hat h)+\ol G_0(\hat h,\phi'(\hat h))\geq 0. 
\end{equation}

Fix any $\gd>0$ and set $q=\phi'(\hat h)$. 
By Lemma \ref{L: exist1}, there exist $\ga\in(0,\,\hat h)$ and 
$\psi\in C^1(\ol\gO_0(\ga,\hat h))$ such that
\[
-b(x)\cdot D\psi(x)+G(x,qDH(x))-\ol G_0(H(x),q)<\gd \ \ 
\text{ for all } x\in\ol \gO_0(\ga,\hat h).
 \] 

Recalling that $v_0^-(\hat h)=\min_{x\in c_0(\hat h)} v^-(x)$, we select $\hat x\in c_0(\hat h)$ so that $v_0^-(\hat h)=v^-(\hat x)$. 
We next select sequences of $\ep_k>0$, $x_k\in \ol\gO_0(\ga,\hat h)$, 
and $u_k\in\cS_{\ep_k}$, with $k\in\N$, so that 
\[
\lim_{k\to\infty}(\ep_k, x_k, (u_k)_*(x_k))=(0, \hat x, v^-(\hat x)). 
\]

For $k\in\N$, we consider the function 
\[
\Phi_k(x):=(u_k)_*(x)-\phi(H(x)) -\ep_k\psi(x)  \ \ \ON \ol\gO_i(\ga,\hat h).
\]
This function is lower semicontinuous and has a minimum at some point $y_k$.
We may assume, by relabeling the sequences if needed, that $\{y_k\}_{k\in\N}$ 
converges to some point $y_0\in\ol\gO_0(\ga,\hat h)$.

Noting that $\Phi_k(x_k)\geq \Phi_k(y_k)$ for all $k\in\N$,  
\[
\lim_{k\to \infty}\Phi_k(x_k)= v^-(\hat x)-\phi(H(\hat x)) 
=(v_0^--\phi)(\hat h),
\]
and
\[
\liminf_{k\to\infty}\Phi_k(y_k)\geq v^-(y_0)-\phi(H(y_0))
\geq (v_0^--\phi)(H(y_0)),
\]
we deduce that
\[
\lim_{k\to \infty}((u_k)_*(y_k)-\phi(H(y_k)))=(v_0^--\phi)(\hat h),\ \ 
\lim_{k\to \infty}(u_k)_*(y_k)=v_0^-(\hat h) \ \ \text{ and } \ \  y_0\in c_0(\hat h)). 
\]
Thanks to \eqref{E: Ch24}, we may assume without loss of generality that 
\[
(u_k)_*(y_k)<\min_{\pl_0\gO} g,
\]
and, by the viscosity property of $(u_k)_*$ and by choice of $\psi$, we obtain
\[\bald
0&\,\leq \gl (u_k)_*(y_k) -\frac{1}{\ep_k}\,b(y_k)\cdot (\phi'(H(y_k))DH(y_k)
+\ep_k D\psi(y_k)) 
\\&\,\quad +G(y_k, \phi'(H(y_k))DH(y_k)+\ep_kD\psi(y_k))
\\&\,=\gl (u_k)_*(y_k) -b(y_k)\cdot D\psi(y_k)) 
+G(y_k, \phi'(H(y_k))DH(y_k)+\ep_kD\psi(y_k))
\\&\,\leq \gl (u_k)_*(y_k) +\gd -G(y_k,qDH(y_k))+\ol G_0(H(y_k),q)
\\&\,\quad +G(y_k, \phi'(H(y_k))DH(y_k)+\ep_kD\psi(y_k)).
\eald\]
Hence, in the limit as $k\to\infty$, we obtain
\[
-\gd \leq \gl v_0^-(\hat h)+\ol G_0(\hat h,q),
\]
which proves \eqref{E: Ch25}.

According to Lemma \ref{L: Ch2}, we have\ $v_i^+(h_i)\leq \min_{\pl_i\gO}g$ for all 
$i\in\cI_0$. Hence, it remains to show that $v_i^+$, with $i\in\cI_0$, is a 
viscosity subsolution of (HJ$_i$).  The argument presented above is easily adapted to show this, the detail of which we leave it to the reader to check.   
\end{proof}

\section{The maximality of the viscosity solution $(v_0^+,\ldots,v_{N-1}^+)$} \label{S: maximal}

Due to Theorem \ref{T: Cl1} and Lemma \ref{L: PS1}, the functions $v_i^+$, with $i\in\cI_0$, are continuous on $\bar J_i\setminus \{0\}$ and have the limit $\,\lim_{J_i\ni h\to 0}v_i^+(h)\in\R$. We set 
\[
d(v_i^+)=\lim_{J_i\ni h\to 0}v_i^+(h) \ \ \ \text{ for } i\in\cI_0. 
\]

\begin{lem} \label{L: max1} For any $i\in\cI_1$, 
\[
\inf_{x\in c_i(0)}v^+(x)\geq 
\max\{d(v_i^+),\,d(v_0^+)\}. 
\]
\end{lem}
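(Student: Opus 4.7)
The plan is to split the infimum over $c_i(0)$ into the smooth part $c_i(0)\setminus\0$ and the degenerate point $0$, and treat each separately. On $c_i(0)\setminus\0$ we exploit the fact that $DH\ne 0$ to approximate from both adjacent cells $\gO_i$ and $\gO_0$, using upper semicontinuity of $v^+$ together with the constancy on level curves asserted by Proposition \ref{P: loop}. For the point $0$ itself we use that $v^+$ is invariant (nondecreasing) along the Hamiltonian flow, combined with the homoclinic structure of the orbits on $c_i(0)\setminus\0$.

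\textbf{Step 1 (regular points).} Fix $x\in c_i(0)\setminus\0$. By the geometric description following (H2)--(H3), $0$ is the only critical point of $H$ on $c_i(0)$, so $DH(x)\ne 0$. By the implicit function theorem the level curves of $H$ near $x$ form a smooth foliation, so we can choose sequences $h_k^-\in J_i$ with $h_k^-\to 0-$ and points $x_k\in c_i(h_k^-)$ with $x_k\to x$, and also $h_k^+\in J_0$ with $h_k^+\to 0+$ and $y_k\in c_0(h_k^+)$ with $y_k\to x$. By Proposition \ref{P: loop}, $v^+(x_k)=v_i^+(h_k^-)$ and $v^+(y_k)=v_0^+(h_k^+)$. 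Since $v^+$ is upper semicontinuous and the limits $d(v_i^+), d(v_0^+)$ exist (Theorem \ref{T: Cl1} and Lemma \ref{L: PS1} giving continuity of $v_i^+, v_0^+$ on $\bar J_i\setminus\0$ and $\bar J_0\setminus\0$ respectively, with well-defined one-sided limits at $0$),
\[
v^+(x)\ \geq\ \limsup_{k\to\infty}v^+(x_k)\ =\ d(v_i^+), \qquad v^+(x)\ \geq\ \limsup_{k\to\infty}v^+(y_k)\ =\ d(v_0^+).
\]

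\textbf{Step 2 (the saddle point $0$).} Recall from the proof of Proposition \ref{P: loop} that $v^+$ is a viscosity subsolution of $-b\cdot Du=0$ in $\gO$, and hence, by Proposition \ref{app1}, the map $t\mapsto v^+(X(t,y))$ is nondecreasing on $\R$ for every $y\in\gO$. Now pick any $y\in c_i(0)\setminus\0$. Since the Hamiltonian flow preserves $H$, the trajectory $X(\cdot,y)$ stays in $c_i(0)$; since $c_i(0)$ is topologically a loop through $0$ with $b\ne 0$ off the saddle, the trajectory is a homoclinic orbit and $X(t,y)\to 0$ as $t\to+\infty$. Upper semicontinuity of $v^+$ at $0$ combined with monotonicity then gives
\[
v^+(y)\ \leq\ \lim_{t\to\infty}v^+(X(t,y))\ \leq\ v^+(0).
\]
Combined with Step 1, this yields $v^+(0)\geq\max\{d(v_i^+),d(v_0^+)\}$.

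Putting the two steps together, every $x\in c_i(0)=(c_i(0)\setminus\0)\cup\0$ satisfies $v^+(x)\geq\max\{d(v_i^+),d(v_0^+)\}$, and taking the infimum proves the lemma. The main obstacle is handling $x=0$, where the level-set approximation used at regular points breaks down because all the loops $c_j(0)$, $j\in\cI_1$, meet there and the implicit function theorem is unavailable; this is precisely where the dynamical argument (invariance of $v^+$ along the flow plus the homoclinic structure of $c_i(0)$) replaces the straightforward foliation argument of Step 1.
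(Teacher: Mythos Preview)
Your proof is correct, and Step~1 is essentially the paper's argument, phrased a bit more cleanly via upper semicontinuity of $v^+$ rather than by unpacking the half-relaxed limit.

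The one point worth noting is that your case split is unnecessary. The paper treats all $x\in c_i(0)$, including $x=0$, by the same argument: since $0\in\partial D_i\cap\partial D_0$, any ball $B_r(0)$ meets both $D_i$ and $D_0$, so one can pick $x_k\in D_i$ and $y_k\in D_0$ with $x_k,y_k\to 0$ and $H(x_k)\to 0^-$, $H(y_k)\to 0^+$ purely by topology, without the implicit function theorem. Your own Step~1 then applies verbatim at $x=0$. The dynamical argument in Step~2 (homoclinic orbit plus monotonicity of $t\mapsto v^+(X(t,y))$) is valid and a nice observation, but it is not needed here.
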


\begin{proof} Fix any $i\in\cI_1$ and $x\in c_i(0)$.  Fix any $\gd>0$, and choose $r>0$ so that 
\[
v^+(x)+\gd>\sup\{u(y)\mid u\in\cS_\ep,\ y\in \ol\gO\cap B_r(x),\ 0<\ep<r \}.  
\] 

We choose $h_{i,\gd}\in J_i$ and $h_{0,\gd}\in J_0$ so that 
\[
B_r(x)\cap c_i(h_{i,\gd})\not=\emptyset \ \ \text{ and } \ \ 
B_r(x)\cap c_0(h_{0,\gd})\not=\emptyset. 
\]
and that 
\[
v_i^+(h_{i,\gd}) +\gd> d(v_i^+) \ \ \text{ and } \ \  
v_0^+(h_{0,\gd})+\gd>d(v_0^+). 
\]

By Proposition \ref{P: loop}, we have 
\[
v_i^+(h_{i,\gd})=v^+(x) \ \ \text{ for all } x\in c_i(h_{i,\gd}).
\]
Hence, we may choose $x_\gd \in B_r(x)$ and 
$u_\gd\in\cS_{\ep_{\gd}}$, with  
$0<\ep_\gd<r$, such that 
\[
u_\gd(x_\gd)+\gd>v_i^+(h_{i,\gd}). 
\] 
Combining these observations, we obtain
\[
v^+(x)+3\gd>u_\gd(x_\gd)+2\gd>v_i^+(h_{i,\gd})+\gd>d(v_i^+),
\]
from which we conclude that 
\[
\inf_{x\in c_i(0)}v^+(x)\geq d(v_i^+).
\]

An argument similar to the above yields
\[
\inf_{x\in c_i(0)}v^+(x)\geq d(v_0^+),
\]
which completes the proof. 
\end{proof}

\begin{lem} \label{L: max3} We have
\begin{equation}\label{E: max21}
\max_{x\in c_0(0)}v^+(x)\leq d(v_0^+). 
\end{equation}
\end{lem}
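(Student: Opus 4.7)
The plan is to adapt the argument of Lemma \ref{L: Ch2}: transport each $\hat x\in c_0(0)$ to a point on $c_0(\gam)$ (for small $\gam\in J_0$) by an ODE combining the fast Hamiltonian flow with a slow drift in the direction of $DH$, and control the transit time via (H3). Fix $\hat x\in c_0(0)$ and $\gam\in(0,h_0/2)$, and select $\ep_k\downarrow 0$, $x_k\to\hat x$ with $x_k\ne 0$, and $u_k\in\cS_{\ep_k}$ such that $u_k^*(x_k)\to v^+(\hat x)$. I would solve
\[
\dot Z_k(t) = \tfrac{1}{\ep_k}\,b(Z_k(t)) + \nu F(Z_k(t)),\qquad Z_k(0)=x_k,\qquad F:=DH/|DH|.
\]
Then $\tfrac{d}{dt}H(Z_k) = \nu|DH(Z_k)|>0$, so there is a first time $\gs_k>0$ with $H(Z_k(\gs_k))=\gam$, hence $Z_k(\gs_k)\in c_0(\gam)$.

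The quantitative core is a bound $\gs_k \le C\gam^{1-n/(m+2)}$, uniform in $k$ (for $k$ large) and in $\hat x\in c_0(0)$. I would get it by combining $dt = dH/(\nu|DH(Z_k)|)$ with the pointwise estimate $|DH(x)| \ge c\,|H(x)|^{n/(m+2)}$ valid near the origin: the two inequalities in (H3) give $|H(x)| \le C_1|x|^{m+2}$ and $|DH(x)| \ge A_2|x|^n$, whence the claimed bound, while away from $0$ the gradient $|DH|$ is simply bounded below by a positive constant. Since $n<m+2$ by (H3), $\int_0^\gam h^{-n/(m+2)}\,dh = O(\gam^{1-n/(m+2)})$.

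Next, (G5) together with $\nu|p|\ge -\nu F(x)\cdot p$ makes $u_k^*$ a viscosity subsolution of $\gl u-(\ep_k^{-1}b+\nu F)\cdot Du-M\le 0$, so Proposition \ref{app1} applied along $Z_k$ gives
\[
u_k^*(x_k)\le e^{-\gl\gs_k}u_k^*(Z_k(\gs_k))+\tfrac{M}{\gl}(1-e^{-\gl\gs_k})\le u_k^*(Z_k(\gs_k))+(\gl C_M+M)\gs_k.
\]
Extracting a subsequence with $Z_k(\gs_k)\to z_*\in c_0(\gam)$, the definition of $v^+$ yields $\limsup_k u_k^*(Z_k(\gs_k))\le v^+(z_*)$, and Proposition \ref{P: loop} identifies $v^+(z_*)=v_0^+(\gam)$. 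Hence $v^+(\hat x)\le v_0^+(\gam)+(\gl C_M+M)\,C\,\gam^{1-n/(m+2)}$, and letting $\gam\to 0^+$ (by Theorem \ref{T: Cl1} and Lemma \ref{L: PS1}, $v_0^+$ extends continuously to $0$ with value $d(v_0^+)$) produces $v^+(\hat x)\le d(v_0^+)$. Taking the maximum over $\hat x\in c_0(0)$ proves \eqref{E: max21}.

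The hard part will be justifying the bound on $\gs_k$ uniformly in $\hat x$ and $k$ when $\hat x=0$ or when the trajectory enters the region $\{H<0\}$, because $F$ is singular at the critical point $0$, and I must check that $Z_k$ does not reach $0$ before time $\gs_k$. Once $H(Z_k)>0$ the trajectory is trapped in $\{H>0\}$, safely away from $0$; an initial segment in $\{H<0\}$, if present, is handled by the same estimate $|DH|\ge c|H|^{n/(m+2)}$ applied on the $\gO_j$-side of the zero level set, provided the trajectory does not hit $0$ before crossing $\{H=0\}$. This is exactly where the condition $n<m+2$ in (H3) is essential.
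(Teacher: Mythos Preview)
Your approach differs from the paper's. The paper constructs an explicit barrier: on a thin strip $\Omega(\delta)$ around $c_0(0)$ it sets
\[
w(x)=g(-H(x))\,|H(x)|^{\beta-1}+\delta^\beta+d(v_0^+)+\eta,
\]
shows (using Lemmas~\ref{L: max9} and~\ref{L: mH}) that $D^-w(x)=\emptyset$ for every $x\in c_0(0)$ and that $w$ is a classical supersolution of \eqref{E: epHJ} off $c_0(0)$, and then applies comparison to obtain $u^*\le w$ for every $u\in\cS_\ep$, hence $v^+\le d(v_0^+)+\eta+\delta^\beta$ on $c_0(0)$. Your ODE/transport argument is instead in the spirit of Lemmas~\ref{L: Ch2} and~\ref{L: max4}.

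Your argument is correct when the approximating sequence satisfies $x_k\in\Omega_0$: then $H(Z_k(t))\ge H(x_k)>0$ for $t\ge 0$, the trajectory remains in the compact set $\{H(x_k)\le H\le\gamma\}\cap\overline{\Omega}_0$, which excludes the origin, the vector field $\ep_k^{-1}b+\nu F$ is Lipschitz there, and the transit-time estimate via Lemma~\ref{L: max9} goes through exactly as you describe.

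The gap you flag at the end is genuine, not merely technical. The half-relaxed limit $v^+(\hat x)$ is a supremum over \emph{all} $y\in B_r(\hat x)\cap\overline{\Omega}$, so you cannot simply restrict to $x_k\in\Omega_0$; at $\hat x=0$ in particular this restriction could give a strictly smaller value. If $x_k\in\Omega_i$ with $H(x_k)<0$, then along your forward flow the level curves $c_i(h)$ with $h\uparrow 0$ all pass through the origin, and nothing you have written rules out $Z_k(t)\to 0$ precisely as $H(Z_k(t))\to 0$: the inequality $|DH|\ge A_0|H|^{\alpha}$ bounds the integral $\int dh/(\nu|DH|)$ (the transit time) but says nothing about where on $c_0(0)$ the crossing occurs. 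Since $F=DH/|DH|$ is singular at $0$, Proposition~\ref{app1} cannot be applied across such a crossing. Running your flow \emph{backward} into $\Omega_i$ (which \emph{is} well posed, as in the proof of Lemma~\ref{L: max4}) yields only $v^+(\hat x)\le d(v_i^+)$, and combined with Lemma~\ref{L: max4} this is the wrong direction for \eqref{E: max21}. This is precisely why the paper abandons the transport method here in favor of the barrier construction, which handles all of $c_0(0)$---including the singular point $0$---uniformly and without any ODE near the critical point.
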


We need the following two lemmas for the proof of Lemma \ref{L: max3}.

\begin{lem} \label{L: max9}There exists a constant $A_0>0$ such that 
\[
|DH(x)|\geq A_0 |H(x)|^\ga \ \ \ \text{ for all } x\in\gO,
\]
where $\ga:=n/(m+2)\in (0,\,1)$ and the constants $n,\, m$ are from \emph{(H3)}. 
\end{lem}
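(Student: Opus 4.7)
The plan is to split $\Omega$ into a small neighborhood of the origin, where the assumption (H3) gives direct control on both $DH$ and $H$, and its complement in $\overline{\Omega}$, where compactness and the absence of additional critical points handle the estimate.

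First, I would work inside the neighborhood $V$ of $0$ given by (H3). Since $H(0)=0$ and $0$ is a critical point of $H$, so that $DH(0)=0$, Taylor's theorem together with the Hessian bound $|H_{x_ix_j}(x)|\leq A_1|x|^m$ for $x\in V$ and $i,j\in\{1,2\}$ gives
\[
|H(x)|\leq C_1|x|^{m+2}\ \ \text{ for all } x\in V',
\]
for some constant $C_1>0$ and some smaller neighborhood $V'\subset V$ of $0$ (obtained by integrating $|D^2H|$ twice along straight segments from $0$ to $x$). Combining this with the lower bound $|DH(x)|\geq A_2|x|^n$ from (H3), and eliminating $|x|$, one gets
\[
|DH(x)|\geq A_2|x|^n\geq A_2\Bigl(C_1^{-1}|H(x)|\Bigr)^{n/(m+2)}= A_2C_1^{-\alpha}|H(x)|^\alpha \ \ \text{ for all } x\in V',
\]
with $\alpha=n/(m+2)\in(0,1)$ by (H3).

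Next, I would take care of the set $\overline{\Omega}\setminus V'$. The key observation is that $0$ is the only critical point of $H$ lying in $\overline{\Omega}$: indeed, for $i\in\cI_1$ the critical point $z_i$ satisfies $H(z_i)<h_i$, hence $z_i\notin\overline{\Omega}_i$, and the other $z_j$ belong to different components. Therefore $|DH|$ attains a positive minimum $\mu>0$ on the compact set $\overline{\Omega}\setminus V'$, while $|H|$ is bounded above on $\overline{\Omega}$ by some constant $K$. It follows that
\[
|DH(x)|\geq \mu \geq \mu K^{-\alpha}|H(x)|^\alpha\ \ \text{ for all } x\in\overline{\Omega}\setminus V'.
\]
Setting $A_0:=\min\{A_2C_1^{-\alpha},\,\mu K^{-\alpha}\}$ yields the claimed inequality on all of $\Omega$, noting that at $x=0$ both sides vanish.

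The only mildly delicate step is the Taylor estimate $|H(x)|\leq C_1|x|^{m+2}$: one must integrate the Hessian bound from (H3) along a segment from $0$ to $x$ while ensuring the segment stays in $V$, which is why one may need to shrink to a convex neighborhood $V'\subset V$. Everything else is a routine compactness/patching argument.
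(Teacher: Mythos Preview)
Your proposal is correct and follows essentially the same approach as the paper: both split $\Omega$ into a ball around the origin, where the Taylor estimate $|H(x)|\leq C|x|^{m+2}$ combined with $|DH(x)|\geq A_2|x|^n$ gives the bound, and its complement, where compactness and the absence of further critical points in $\overline\Omega$ finish the job. Your version is slightly more careful in spelling out why one may need to shrink to a convex $V'$ and in separating $\mu=\min|DH|$ and $K=\max|H|$ rather than writing the ratio $|DH|/|H|^\alpha$ directly, but the argument is the same.
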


\begin{proof}  Let $m,\,n,\,A_1,\,A_2$, and $V$ be the constants 
and neighborhood of the origin from (H3), respectively.  
We may assume that $V=B_R$ for some $R>0$. 
Since $H(0)=0$ and $DH(0)=0$, we deduce by (H3) that 
\[
|H(x)|\leq C|x|^{m+2} \ \ \ \text{ for all } x\in B_R
\]
and some constant $C>0$, and consequently,
\[
|DH(x)|\geq A_2|x|^n\geq A_2\left(\frac{|H(x)|}{C}\right)^{\frac{n}{m+2}}
=\frac{A_2}{C^\ga}\,|H(x)|^\ga \ \ \text{ for all } x\in B_R. 
\] 
Noting that
\[
\min_{x\in \ol\gO\setminus B_R}\frac {|DH(x)|}{|H(x)|^\ga}>0,
\]
we conclude that for some constant $A_0>0$, 
\[
|DH(x)|\geq A_0|H(x)|^\ga \ \ \text{ for all } x\in\gO. \qedhere
\]
\end{proof}

We define the function $m_H\map [0,\,\infty)\to \R$ by
\[
m_H(r)=\max_{x\in\ol\gO_0\cap \ol B_r}H(x).
\]
Note that $m_H(r)>0$ for all $r>0$. 

\begin{lem} \label{L: mH}  Let $R>0$ be a constant such that $B_R\subset \gO$.
Then there exist constants $\rho>1$ and $A_3>0$ such that
\begin{equation}\label{E: mH}
m_H(r)\geq A_3r^\rho \ \ \ \text{ for all } \ r\in(0,\,R).
\end{equation}
\end{lem}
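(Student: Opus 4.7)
The plan is to exploit the gradient lower bound $|DH(x)| \geq A_0 |H(x)|^\alpha$ from Lemma \ref{L: max9}, with $\alpha := n/(m+2) \in (0,1)$, by tracing a unit-speed gradient ascent of $H$ starting just inside $D_0$. Since $H(0) = 0$ and $D_0 = \{H > 0\}$ is open, for each $r \in (0, R)$ I can choose $x_0 \in D_0 \cap B_{r/2}$, so $H(x_0) > 0$, and let $X$ solve
\[
\dot X(t) = \frac{DH(X(t))}{|DH(X(t))|}, \qquad X(0) = x_0.
\]
Setting $\phi(t) := H(X(t))$, the chain rule gives $\phi'(t) = |DH(X(t))| \geq A_0 \phi(t)^\alpha$, so $(\phi^{1-\alpha})' \geq A_0(1-\alpha)$, and integration on $[0, r/2]$ yields $\phi(r/2) \geq C r^\rho$ with $\rho := 1/(1-\alpha) = (m+2)/(m+2-n) > 1$ and $C := (A_0(1-\alpha)/2)^\rho$.

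Next I would certify that the flow is defined throughout $[0, r/2]$. The unit-speed bound $|X(t) - x_0| \leq t$ keeps $X(t) \in \ol B_r \subset B_R \subset \gO$; monotonicity of $\phi$ gives $H(X(t)) \geq H(x_0) > 0$, so $X(t)$ is uniformly bounded away from the critical point $z_0 = 0$ (which lies on $\{H = 0\}$); and the other critical points $z_1, \ldots, z_{N-1}$ do not meet $B_R$ since each satisfies $H(z_i) < h_i < 0$, placing them outside every $\gO_j$ and off $\{H = 0\}$, hence outside $\gO$. Therefore $DH(X(t)) \neq 0$ along the trajectory and the ODE is smooth.

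To promote the estimate on $\phi(r/2)$ to the required bound on $m_H(r) = \max_{\ol\gO_0 \cap \ol B_r} H$, I would split according to whether the trajectory exits $\ol\gO_0$. If $\phi(t) \leq h_0$ throughout $[0, r/2]$, then $X(r/2) \in \ol\gO_0 \cap \ol B_r$ and $m_H(r) \geq \phi(r/2) \geq C r^\rho$. Otherwise, continuity of $\phi$ provides $t_\ast \in (0, r/2)$ with $\phi(t_\ast) = h_0$, and $X(t_\ast) \in \ol\gO_0 \cap \ol B_r$ gives $m_H(r) \geq h_0$. In either case $m_H(r) \geq \min(Cr^\rho, h_0)$, and $A_3 := \min(C, h_0 R^{-\rho})$ delivers $m_H(r) \geq A_3 r^\rho$ uniformly in $r \in (0, R)$.

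The main obstacle is the second paragraph, namely the geometric bookkeeping needed to keep the gradient flow away from the singular set of $DH/|DH|$ and inside $B_R$ long enough for the Bernoulli integration to bite. Once that is in place, the analysis collapses to a routine one-dimensional ODE comparison.
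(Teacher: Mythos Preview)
Your proof is correct and follows essentially the same approach as the paper: both run the unit-speed gradient ascent $\dot X = DH/|DH|$ from a point with $H>0$ near the origin and integrate the differential inequality $\phi' \geq A_0\phi^{\alpha}$ coming from Lemma~\ref{L: max9} to obtain the exponent $\rho = 1/(1-\alpha)$. The only tactical differences are that the paper starts from a maximizer of $H$ on $\ol B_s\cap\ol\gO_0$, stops the flow when $H$ reaches $m_H(r)$, and then sends $s\to 0$, whereas you start from an arbitrary point of $D_0\cap B_{r/2}$, run for the fixed time $r/2$, and handle the possibility $\phi>h_0$ by an explicit case split; both routes yield the same estimate.
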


\begin{proof} Fix any $r,s\in(0,\,R)$ so that $r>s$, and choose a point $x_s\in \ol \gO_0\cap\ol B_s$ so that 
\[
m_H(s)=H(x_s). 
\]
Since $DH(x)\not=0$ for $x\in \gO\setminus \{0\}$, it follows that $H$ does not take a 
local maximum at any point in $\gO\setminus \{0\}$ and hence, $m_H(r)>m_H(s)$.  
More generally, the function $m_H$ is increasing in $(0\,R)$.  

Solve the initial value problem
\[
\dot Y(t)=F(Y(t)) \ \ \text{ and }  \ \ Y(0)=x_s,
\]
where $F$ is the function given by $F(x):=DH(x)/|DH(x)|$.
We note that 
\begin{equation}\label{E: mH1}
\frac{d}{dt}H(Y(t))=|DH(Y(t))|>0 \ \ \text{ for all } t\geq 0
\end{equation}
as far as $Y(t)$ exists, and we infer that $H(Y(t))\geq m_H(s)$ for all $t\geq 0$, and that there exists $\tau>0$ such that $H(Y(\tau))=m_H(r)$. 
From these, we deduce, together with the strict monotonicity of $m_H$, that 
$|Y(t)|\geq s$ for all $t\geq 0$, and $|Y(\tau)|=r$.  

Noting by Lemma \ref{L: max9} that $|DH(x)|\geq A_0|H(x)|^\ga$ for all $x\in\gO$ 
and some constants $A_0>0$ and $\ga\in(0,\,1)$, we compute by \eqref{E: mH1} that
\[\bald
m_H(r)^{1-\ga}-m_H(s)^{1-\ga}&\,=H(Y(\tau))^{1-\ga}-H(Y(0))^{1-\ga}
\\&\,=(1-\ga)\int_0^\tau H(Y(t))^{-\ga}\frac{d}{dt}H(Y(t))\,dt
\geq (1-\ga)A_0\tau.
\eald
\]
and that, since $|\dot Y(t)|=1$,
\[
r-s\leq |Y(\tau)|-|Y(0)|\leq |Y(\tau)-Y(0)|\leq \int_0^\tau |\dot Y(t)|dt= \tau.
\]
Hence, we obtain
\[
m_H(r)^{1-\ga}-m_H(s)^{1-\ga}\geq (1-\ga)A_0(r-s).
\]
Sending $s\to 0+$ yields
\[
m_H(r)\geq \left((1-\ga)A_0 r\right)^{\frac 1 {1-\ga}}=A_3r^\rho,
\]
where $\rho:=1/(1-\ga)$ and $A_3:=((1-\ga)A_0)^\rho$, 
which completes the proof. 
\end{proof}

\begin{proof}[Proof of Lemma  \ref{L: max3}]
Fix any $\eta > 0$ and choose $\gd_0\in J_0=(0,\, h_0)$ so that 
\[
d(v^+_0)+\eta>v_0^+(h) \ \ \text{ for all } h \in (0,\,\gd_0). 
\] 
We may assume that $\gd_0<\eta$ and $\gd_0<\bar h$. By the definition of $v_0^+$, we infer that for each $h\in (0,\,\gd_0)$, there is $\ep(h)>0$ such that if $h\in(0,\,\gd_0)$, then 
\begin{equation}\label{E: max22}
d(v_0^+)+\eta>\sup\{u^*(x)\mid u\in \cS_\ep,\, 0<\ep<\ep(h), \, x\in c_0(h)\}. 
\end{equation}

Fix any $\gd\in (0,\,\gd_0)$.  We choose a continuous nondecreasing function 
$f\map (-\infty,\,\gd) \to \R$ so that 
\[
f(r)=1 \ \ \text{ for } r<\gd/2 \ \ \text{ and } \ \ \lim_{r\to\gd-}f(r)=\infty.
\] 
Define $g\map (-\infty,\,\gd)\to \R$ by
\[
g(r)=\int_0^r f(t)\,dt. 
\]
Observe that 
\[
g(r)=r \ \ \text{ for } r\leq \gd/2 \ \ \text{ and } \ \ |r|\leq |g(r)|\leq g'(r)|r| \ \ \text{ for } 
r\in (-\infty,\,\gd).
\]

According to Lemma \ref{L: max9}, 
there are constants $\ga\in (0,\,1)$ and $A_0>0$ such that  
\begin{equation}\label{E: max23}
|DH(x)|\geq A_0 |H(x)|^\ga \ \ \text{ for all } x\in \ol\gO. 
\end{equation} 
Let $\gb\in(0,\,1)$ be a constant to be fixed later. 
We define the function $w\in  C(\gO(\gd)\cup c_0(\gd))$ by
\begin{equation*}
w(x) = g(-H(x)) |H(x)|^{\gb -1}+\gd^\gb 
+ d(v_0^+) + \eta.
\end{equation*}
Observe that
\[\bald
& w\in C^1(\gO(\gd)\setminus c_0(0)),
\\&\pl\gO_0(\gd)=c_0(\gd) \cup \bigcup_{j\in\cI_1}c_j(-\gd), 
\\&w(x)=d(v_0^+)+\eta \ \ \text{ for all } x\in c_0(\gd),
\\&\lim_{\gO(\gd)\ni y\to x} w(y)=\infty \ \ \text{ uniformly for } x\in \bigcup_{j\in \cI_1}c_j(-\gd).
\eald
\]
 
Compute that for $x\in \gO(\gd)\setminus c_0(0)$,
\[
\bald
Dw(x) &\,= \big[-g'(-H)|H|^{\gb-1}+(\gb-1)g(-H)|H|^{\gb-3}H\big]DH,
\\&\,= \big[g'(-H)(-H)+(\gb-1)g(-H)\big]|H|^{\gb-3}HDH
\eald
\]
and moreover,
\[
\bald
|Dw(x)| &\,\geq \left(g'(-H)|H|-(1-\gb)|g(-H)|\right)|H|^{-\gb-2}|DH|
\\&\,= \gb g(-H)|H|^{\gb-2}|DH|.
\eald
\]
Combining this with \eqref{E: max23} yields 
\[
|Dw(x)|\geq \gb A_0 |g(-H(x))| |H(x)|^{\ga+\gb-2}
\geq \gb A_0|H(x)|^{\ga+\gb-1}.
\]
Moreover, using (G5), we compute 
\begin{equation}\label{E: max24}
\gl w(x) - \ep^{-1} b(x) \cdot Dw(x) + G(x, Dw(x)) 
\geq \gl d(v_0^+) + \nu \gb A_0|H(x)|^{\ga +\gb-1} - M
\end{equation}
for all $x \in \gO(\gd)\setminus c_0(0)$. 

We assume in what follows that $\gb>0$ is sufficiently small so that 
\begin{equation}\label{E: max25}
\ga+\gb-1<0.
\end{equation}
In view of \eqref{E: max24}, by choosing $\gd\in(0,\,\gd_0)$ sufficiently small,  
we may assume that 
\begin{equation}\label{E: max26}
\gl w-\ep^{-1} b(x)\cdot Dw(x)+G(x,Dw(x))>0 \ \ \text{ for all } x\in\gO(\gd)\setminus c_0(0). 
\end{equation}

By Lemma \ref{L: mH}, we have
\begin{equation}\label{E: max27}
m_H(r)\geq A_3r^\rho \ \ \text{ for all } r\in(0,\,R),
\end{equation}
where $\rho>1$, $A_3>0$, and $R>0$ are constants. In addition to \eqref{E: max25}, we assume hereafter that $\gb<1/\rho$. That is, we fix $\gb>0$ so that 
\[
\gb<\min\{\rho^{-1},\,1-\ga\}. 
\]

We claim that
\begin{equation}\label{E: max28}
D^-w(x)=\emptyset \ \ \ \text{ for all } x\in c_0(0),
\end{equation}
where $D^-w(x)$ denotes the subdifferential of $w$ at $x$. 

To see this, we fix any $x\in c_0(0)$. By contradiction, we suppose that 
$D^-w(x)\not=\emptyset$. Let $\phi\in C^1(\gO(\gd))$ be a function such that 
$w-\phi$ attains a minimum at $x$. If $x\not=0$, then 
\[
x+tDH(x)\in \gO_0 \cap\gO(\gd) \ \ \text{ for all } t\in(0,\,t_0)
\]
and some $t_0>0$, and consequently, we have for $t\in (0,\,t_0)$, 
\[
(w-\phi)(x)\leq (w-\phi)(x+t DH(x)). 
\]
For sufficiently small $t>0$, this reads
\[t^{-\gb}\left(\phi(x)-\phi(x+tDH(x))\right)\geq 
t^{-\gb}\left(w(x)-w(x+tDH(x))\right)=t^{-\gb}H(x+tDH(x))^\gb, 
\]
which yields, in the limit as $t\to 0+$,
\[
0\geq |DH(x)|^{2\gb}. 
\] 
This is a contradiction. Otherwise, we have $x=0$ and, for any $y\in \gO(\gd)$,
\[
w(x)-w(y)\leq \phi(x)-\phi(y). 
\]
Moreover, for any $y\in \gO(\gd)\cap \gO_0$, we have
\[
H(y)^\gb\leq \phi(x)-\phi(y),
\]
and for any $r\in(0,\,\gd \wedge R)$, 
\[
m_H(r)^\gb\leq \max_{y\in \ol B_r\cap\ol\gO_0}(\phi(x)-\phi(y)).
\]
Since \ $
m_H(r)^\gb\geq A_3^\gb r^{\gb \rho}$  \ by \eqref{E: max27}
and $\gb \rho<1$, we obtain from the above 
\[
A_3^{\gb}\leq \lim_{r\to 0+} r^{-\gb \rho}\max_{y\in \ol B_r\cap\ol\gO_0}(\phi(x)-\phi(y))=0,
\]
which is a contradiction. Thus, we conclude that \eqref{E: max28} is valid,  
and moreover from \eqref{E: max26} and \eqref{E: max28} that $w$ 
is a viscosity supersolution of 
\[
\gl w-\ep^{-1}b\cdot Dw+G(x,Dw)\geq 0 \ \ \IN \gO(\gd). 
\] 

Recalling \eqref{E: max22}, we deduce 
by the comparison theorem that for any $\ep\in (0,\,\ep(\gd))$ 
and $u\in \cS_\ep$, we have
\[
u^*(x)\leq w(x) \ \ \text{ for all } x\in \gO(\gd),
\]
which yields 
\[
v^+(x)\leq w(x)=\gd^\gb+d(v_0^+)+\eta \ \ \text{ for all } x\in c_0(0). 
\]
This ensures that $v^+(x)\leq d(v_0^+)$ for all $x\in c_0(0)$.
\end{proof} 

\begin{lem}\label{L: max4} For every $i\in\cI_1$,
\[
d(v_0^+)\leq d(v_i^+). 
\]
\end{lem}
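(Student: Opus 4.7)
Fix $i\in\cI_1$. By Lemmas \ref{L: max1} and \ref{L: max3}, $v^+\equiv d(v_0^+)$ on $c_0(0)\supset c_i(0)$. Fixing any $\hat x\in c_i(0)\sm\{0\}$, it therefore suffices to prove $v^+(\hat x)\leq d(v_i^+)$; combined with $v^+(\hat x)=d(v_0^+)$ this yields the lemma.

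The strategy is, in the spirit of Lemma \ref{L: Ch2}, to transport the upper bound $u\leq d(v_i^+)+\eta$, available on $c_i(-\gd)$ for small $\ep$, back to $\hat x$ via a modified Hamiltonian flow. Using (G5), any $u\in\cS_\ep$ is a viscosity subsolution of $\gl u-\ep^{-1}b\cdot Du+\nu|Du|-M\leq 0$ in $\gO$, and hence, since $|Du|\geq F\cdot Du$ for $F:=DH/|DH|$, also of
\[\gl u-(\ep^{-1}b-\nu F)\cdot Du-M\leq 0 \In \gO\sm\{0\}.\]
The ODE $\dot Y=\ep^{-1}b(Y)-\nu F(Y)$ satisfies $\dot H(Y)=-\nu|DH(Y)|<0$, so $H$ strictly decreases along trajectories. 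Thanks to Lemma \ref{L: max9} ($|DH(x)|\geq A_0|H(x)|^\ga$ with $\ga=n/(m+2)\in(0,1)$), the time required for $H$ to fall from $0$ to $-\gd$ admits the uniform bound
\[\tau_\gd\leq \fr{1}{\nu A_0}\int_0^\gd s^{-\ga}\,ds=\fr{\gd^{1-\ga}}{\nu A_0(1-\ga)}.\]
Select sequences $\ep_k\downarrow 0$, $u_k\in\cS_{\ep_k}$ and $x_k\to\hat x$ with $u_k^*(x_k)\to v^+(\hat x)$. Arrange---this is the main technical point discussed below---that the trajectory $Y_k$ starting at $x_k$ stays inside $D_i$ and first hits $c_i(-\gd)$ at some time $\tau_k\leq\tau_\gd$ at a point $z_k$. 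Applying Proposition \ref{app1} to the subsolution along $Y_k$ gives
\[u_k^*(x_k)\leq e^{-\gl\tau_k}u_k^*(z_k)+(1-e^{-\gl\tau_k})M/\gl\leq u_k^*(z_k)+C\tau_\gd\]
for some $C=C(\gl,M,C_M)$. Upper semicontinuity of $v^+$ together with compactness of $c_i(-\gd)$ yield, via a standard finite-cover argument, that for any $\eta>0$ and all sufficiently small $\ep>0$ one has $u(y)\leq v_i^+(-\gd)+\eta$ for every $u\in\cS_\ep$ and $y$ in an appropriate neighbourhood of $c_i(-\gd)$, so $u_k^*(z_k)\leq v_i^+(-\gd)+\eta$ for $k$ large. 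Passing to the limit in $k$, then sending $\gd\to 0+$ (so that $v_i^+(-\gd)\to d(v_i^+)$ and $\tau_\gd\to 0$), and finally $\eta\to 0+$, we obtain $v^+(\hat x)\leq d(v_i^+)$.

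The hardest part is to ensure that the trajectory truly enters $D_i$ rather than some other well $D_j$, $j\neq i$. For a generic starting point in $\gO_0$ the fast rotation generated by $\ep_k^{-1}b$ can, as $\ep_k\downarrow 0$, effectively randomize the first crossing of $\{H=0\}$ over all the separatrices $\bigcup_{j\in\cI_1}c_j(0)$. To overcome this one exploits the constancy $v^+\equiv d(v_0^+)$ on $c_0(0)$ together with Lemma \ref{L: loop} to select $x_k$ lying on $c_i(0)\sm\{0\}$ (or on the $\gO_i$ side with $|x_k-\hat x|\ll\ep_k$) without decreasing $u_k^*(x_k)$ by more than $o(1)$. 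From such a starting point the vector $-F(x_k)$ is the inward normal to $D_i$, so the trajectory enters $D_i$ immediately, and the strict monotonicity of $H$ along the flow prevents it from ever returning to $\partial D_i$.
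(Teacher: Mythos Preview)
Your overall strategy---the modified flow $\dot Y=\ep^{-1}b-\nu F$, the subsolution inequality from (G5), Proposition~\ref{app1}, and the time bound via Lemma~\ref{L: max9}---is exactly the mechanism the paper uses. The gap is precisely where you flag it: the ``arrangement'' that $x_k$ lies on $c_i(0)\setminus\{0\}$ (or in $\gO_i$) while still realising $u_k^*(x_k)\to v^+(\hat x)$. Lemma~\ref{L: loop} only lets you slide along $c_j(h)$ for $h\in J_j$, with error $\ep_k C T_j(h)$; it gives no way to move a point from $\gO_0$ onto $c_i(0)$, and on $c_0(\gam_k)$ with $\gam_k=H(x_k)\to 0$ the factor $T_0(\gam_k)$ blows up, so even moving within $c_0(\gam_k)$ is uncontrolled. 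The constancy of $v^+$ on $c_0(0)$ is a statement about the limit object and does not by itself produce, for each $k$, a point on $c_i(0)$ with large $u_k^*$.

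The paper avoids this entirely by \emph{not} working at $\hat x$. It compares $v_0^+(\gd)$ with $v_i^+(-\gd)$ for a fixed $\gd>0$ and only at the very end lets $\gd\to 0$. Concretely: pick sequences with $H(x_k)=\gam_k\to\gd>0$ and $u_k^*(x_k)\to v_0^+(\gd)$; fix $z\in c_i(0)\setminus\{0\}$ once and for all and launch the trajectory $Y_k$ from $z$ (not from $x_k$). Forward in time $Y_k$ enters $D_i$ and hits $c_i(-\gd)$ at time $\tau_k$; backward in time it enters $D_0$ and hits $c_0(\gam_k)$ at time $\gs_k<0$. Since $\gam_k\to\gd>0$, the period $T_0(\gam_k)$ stays bounded, so Lemma~\ref{L: loop} legitimately transfers $u_k^*(x_k)$ to $u_k^*(Y_k(\gs_k))$ with error $C\ep_k T_0(\gam_k)\to 0$. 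Proposition~\ref{app1} along $[\gs_k,\tau_k]$ then gives $v_0^+(\gd)\leq v_i^+(-\gd)+O(\gd^{1-\ga})$, and $\gd\to 0$ finishes. Note also that the paper's argument is self-contained: it uses neither Lemma~\ref{L: max1} nor Lemma~\ref{L: max3}, so your opening reduction to $v^+(\hat x)=d(v_0^+)$ is an unnecessary detour.
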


\begin{proof}  Fix $i\in\cI_1$, $z\in c_i(0)\setminus\{0\}$, and $\gd>0$ so that $\gd<h_0\wedge |h_i|$. We choose sequences of $\ep_k>0$, $u_k\in\cS_{\ep_k}$, and $x_k\in\gO_0$
such that as $k\to\infty$, 
\[
(\ep_k,H(x_k), u_k^*(x_k)) \to (0,\gd,v_0^+(\gd)). 
\] 
We set $\gam_k=H(x_k)$ and, by relabeling the sequences if needed, we may assume that
$\gam_k<2\gd$ for all $k\in\N$. 

Fix $k\in\N$ and consider the initial value problem
\[
\dot Y_k(t)=\frac{1}{\ep_k}b(Y_k(t))-\nu F(Y_k(t)) \ \ \ \text{ and } \ \ \ Y_k(0)=z,
\]
where the function $F$ is given by $F(x):=DH(x)/|DH(x)|$. Let $I_k$ denote 
the maximal interval of existence of the solution $Y_k(t)$. Noting that
\[
\frac{d}{dt}H(Y_k(t))=-\nu|DH(Y_k(t))| \ \ \text{ for } t\in I_k,
\] 
we deduce that there exist $\gs_k, \tau_k\in I_k$ such that $\gs_k<0<\tau_k$,
\[
H(Y_k(\gs_k))=\gam_k \ \ \ \text{ and } \ \ \ H(Y_k(\tau_k))=-\gd. 
\]
According to Lemma \ref{L: max9}, there are constants $\ga\in (0,\,1)$ and $A_0>0$ such that 
\[
|DH(x)|\geq A_0|H(x)|^\ga \ \ \text{ for } x\in\gO.
\]
Noting that $Y_k(t)\in\gO$ for $t\in [\gs_k,\,\tau_k]$, we compute that for $t\in(\gs_k,\,0)$,
\[
\frac{d}{dt}H(Y_k(t))^{1-\ga}=(1-\ga)H(Y_k(t))^{-\ga}\frac{d}{dt}H(Y_k(t))\leq -(1-\ga)\nu A_0, 
\]
and, after integration over $(\gs_k,\,0)$, 
\[
-\gam_k^{1-\ga}\leq -(1-\ga)\nu A_0|\gs_k|,
\]
which ensures that 
\begin{equation}\label{E: max41}
-\gs_k=|\gs_k|\leq \frac{\gam_k^{1-\ga}}{(1-\ga)\nu A_0}\leq \frac{(2\gd)^{1-\ga}}{(1-\ga)\nu A_0}. 
\end{equation}
Similarly, we deduce that
\begin{equation}\label{E: max42}
\tau_k\leq \frac{\gd^{1-\ga}}{(1-\ga)\nu A_0}. 
\end{equation}

Since $|F(x)|=1$ and, hence, $u_k^*$ is a viscosity subsolution of 
\[
\gl u_k^*-\left(\frac b{\ep_k}-\nu F\right)\cdot Du_k^*-M=0 \ \ \IN \gO\setminus\{0\}
\]
by (G5), we may apply Proposition \ref{app1}, to obtain
\[
u_k^*(Y_k(\gs_k))\leq e^{\gl\gs_k}\left( e^{-\gl \tau_k}u_k^*(Y_k(\tau_k)) 
+\int_{\ga_k}^{\tau_k} Me^{-\gl t}\,dt\right). 
\]
Recalling that $\gam_k=H(x_k)=H(Y_k(\gs_k))$, we 
combine the above with Lemma \ref{L: loop}, to get
\[\bald
u_k^*(x_k)&\,\leq C\ep_k T_0(\gam_k) 
+e^{\gl(\gs_k-\tau_k)} u_k^*(Y_k(\tau_k)) 
+\gl^{-1}M\left(1-e^{\gl(\gs_k-\tau_k)}\right)
\\&\, \leq C\ep_k T_0(\gam_k)+u_k^*(Y_k(\tau_k))+ 
C_M(1+\gl^{-1})\left(1-e^{\gl(\gs_k-\tau_k)}\right),
\eald
\]
and, moreover, by \eqref{E: max41} and \eqref{E: max42}, 
\[
u_k^*(x_k)\leq C\ep_k T_0(\gam_k)+\max_{c_i(-\gd)}u_k^*+ 
C_M(1+\gl^{-1})\left\{1-\exp\left(-\gl \left(\frac{(2\gd)^{1-\ga}+\gd^{1-\ga}}{(1-\ga)\nu A_0}\right)\right)\right\}.
\]
Sending $k\to\infty$ yields
\[
v_0^+(\gd)\leq v_i^+(-\gd)+
C_M(1+\gl^{-1})\left\{1-\exp\left(-\gl \left(\frac{(2\gd)^{1-\ga}+\gd^{1-\ga}}{(1-\ga)\nu A_0}\right)\right)\right\},
\]
and hence, $d(v_0^+)\leq d(v_i^+)$. 
\end{proof}

\begin{cor} \label{C: max3} For every $i\in\cI_0$,
\[
v^+(x)=v_i^+(0)=d(v_i^+) \ \ \text{ for all } x\in c_0(0). 
\]
In particular, $v_0^+(0)=\ldots=v_{N-1}^+(0)$.
\end{cor}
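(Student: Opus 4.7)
The plan is to combine the three lemmas immediately preceding the corollary, exploiting the geometric fact that, by the description of $\pl D_0$ in the introduction,
\[
c_0(0) \;=\; \pl D_0 \;=\; \bigcup_{i \in \cI_1} c_i(0).
\]
So every point $x \in c_0(0)$ lies in $c_i(0)$ for at least one $i\in\cI_1$, and I can apply the lemmas indexed by $i\in\cI_1$ pointwise on $c_0(0)$.

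First, I would pin down $v^+$ on $c_0(0)$. For any $x\in c_0(0)$, pick $i\in\cI_1$ with $x\in c_i(0)$; Lemma \ref{L: max1} gives
\[
v^+(x) \;\geq\; \inf_{y\in c_i(0)} v^+(y) \;\geq\; d(v_0^+),
\]
while Lemma \ref{L: max3} gives $v^+(x) \leq \max_{y\in c_0(0)} v^+(y) \leq d(v_0^+)$. Hence $v^+ \equiv d(v_0^+)$ on $c_0(0)$.

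Next, I would show the endpoint values $d(v_i^+)$ all agree. From Lemma \ref{L: max1}, for $i\in\cI_1$,
\[
d(v_0^+) \;=\; \inf_{c_i(0)} v^+ \;\geq\; d(v_i^+),
\]
and Lemma \ref{L: max4} supplies the reverse inequality $d(v_0^+) \leq d(v_i^+)$. Thus $d(v_0^+) = d(v_i^+)$ for every $i\in\cI_1$, and in particular the common value equals $d(v_0^+)$.

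Finally, I would translate this into the statement of the corollary. By the definition of $v_i^+$ in \eqref{E: 4.2}, since $0 \in \bar J_i \setminus \{h_i\}$ for every $i \in \cI_0$, we have $v_i^+(0) = \max_{c_i(0)} v^+$. Combining with the first step, $v_i^+(0) = d(v_0^+) = d(v_i^+)$, which gives $v^+(x) = v_i^+(0) = d(v_i^+)$ for all $x\in c_0(0)$ and all $i\in\cI_0$, and hence the equality $v_0^+(0)=\cdots=v_{N-1}^+(0)$. I do not expect any real obstacle here: all the heavy lifting (the comparison-type estimate of Lemma \ref{L: max3}, the one-sided transport estimate of Lemma \ref{L: max1}, and the crossing-the-separatrix estimate of Lemma \ref{L: max4}) has already been done, and the corollary is essentially a bookkeeping assembly of those three facts together with the decomposition of $c_0(0)$.
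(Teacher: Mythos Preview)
Your proposal is correct and follows essentially the same route as the paper: both combine Lemmas \ref{L: max1}, \ref{L: max3}, and \ref{L: max4} with the decomposition $c_0(0)=\bigcup_{i\in\cI_1}c_i(0)$, the only difference being that the paper writes the inequalities as a single chain $\max\{d(v_0^+),d(v_i^+)\}\leq \inf_{c_i(0)}v^+\leq \max_{c_0(0)}v^+\leq d(v_0^+)\leq d(v_i^+)$ and reads off all equalities at once, whereas you split this into two passes (first pinning $v^+$ to $d(v_0^+)$ on $c_0(0)$, then matching the $d(v_i^+)$).
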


\begin{proof} Combining Lemmas \ref{L: max1}, \ref{L: max3}, and \ref{L: max4} yields
\[
\max\{d(v_0^+),\,d(v_i^+)\}\leq \inf_{c_i(0)}v^+\leq \max_{c_0(0)}v^+\leq d(v_0^+)
\leq d(v_i^+) \ \ \text{ for all } i\in\cI_1,
\]
which shows that
\[
\inf_{c_i(0)}v^+=\max_{c_0(0)} v^+=d(v_i^+)=d(v_0^+) \ \ \text{ for all } i\in\cI_1.
\]
Since $c_0(0)=\bigcup_{i\in\cI_1}c_i(0)$, we 
conclude that 
\[
v^+(x)=d(v_i^+) \ \ \ \text{ for all } x\in c_0(0), \, i\in\cI_0,
\]  
and, by the definition of $v_i^+(0)$, 
\[
v_i^+(0)=d(v_i^+) \ \ \ \text{ for all } i\in\cI_0. \qedhere
\]
\end{proof}

For the proof of Theorem \ref{T: Cl2}, we argue below as in the proof of \cite[Lemma 3.8]{Ku16}.  We need the following lemma, the proof of which we refer to  
\cite[Lemma 4.4]{Ku16}.

\begin{lem} \label{L: key}
For any $\eta > 0$,
there exist a constant $\gd \in (0, \, \bar h)$ and a function $\psi \in C^1 (\gO(\gd))$ such that
\begin{equation*}
-b \cdot D\psi + G(x, 0) < G(0, 0) + \eta \ \ \  \In \gO(\gd).
\end{equation*}
\end{lem}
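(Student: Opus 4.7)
The plan is to construct $\psi$ as an approximate cell corrector for the averaging problem on each edge $\gO_i$, arranging $b\cdot D\psi \approx G(\cdot,0) - \ol G_i(H(\cdot),0)$ via the Hamiltonian flow, and then using Lemma \ref{L: G0}---which says $\ol G_i(h,0)\to G(0,0)$ as $h\to 0$---to absorb the residual. First, by Lemma \ref{L: G0} pick $\gd_1\in(0,\bar h)$ so that $\ol G_i(h,0)<G(0,0)+\eta/3$ for every $i\in\cI_0$ and every $h\in \bar J_i\cap(-\gd_1,\gd_1)$ with $h\neq 0$.

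Second, construct a corrector on each edge. For each $i\in\cI_0$ and each $h\in J_i$ with $0<|h|<\gd_1$, Lemma \ref{L: exist1} applied with $q=0$ and tolerance $\eta/3$ yields a neighborhood of $c_i(h)$ in $\gO_i$ of the form $\ol\gO_i(\ga,\gb)$, together with a function $\psi_{i,h}\in C^1(\ol\gO_i(\ga,\gb))$ such that $\bigl|-b\cdot D\psi_{i,h} + G(x,0) - \ol G_i(H(x),0)\bigr|<\eta/3$. Fix a small $\gs\in(0,\gd_1)$; by compactness cover the compact set $\bar J_i \cap \{\, h\in\R \mid \gs \leq |h| \leq \gd_1 \,\}$ by finitely many such $(\ga,\gb)$-intervals, and glue the corresponding $\psi_{i,h}$'s with a $C^1$ partition of unity in the $H$-variable to obtain $\psi_i\in C^1\bigl(\gO_i\cap\{\, x \mid \gs\leq |H(x)|\leq \gd_1 \,\}\bigr)$ satisfying $\bigl|-b\cdot D\psi_i+G(x,0)-\ol G_i(H(x),0)\bigr|<\eta/2$ throughout its domain.

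Third, glue across edges and cut off near $\{\, H=0 \,\}$. Choose $\chi\in C^1(\R)$ with $\chi(h)=1$ for $|h|\geq 2\gs$ and $\chi(h)=0$ for $|h|\leq \gs$, and set $\psi(x)=\chi(H(x))\psi_i(x)$ for $x\in \gO_i\cap \gO(\gd_1)$, extended by $0$ on $\{\, x \mid |H(x)|\leq \gs \,\}$. Pairwise disjointness of the $\gO_i$'s together with the vanishing of $\chi$ near $\{\, H=0 \,\}\supset c_0(0)$ give $\psi\in C^1(\gO(\gd_1))$, and the identity $b\cdot DH\equiv 0$ yields $-b\cdot D\psi=-\chi(H)\,b\cdot D\psi_i$ on each $\gO_i$. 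On $\{\, |H|\geq 2\gs \,\}$ one immediately gets $-b\cdot D\psi+G(x,0)<\ol G_i(H(x),0)+\eta/2<G(0,0)+5\eta/6$; on the transition zone $\{\,\gs\leq |H|\leq 2\gs\,\}$ the same computation yields a convex combination of $G(x,0)$ and $\ol G_i(H(x),0)$, up to an $\eta/2$ error.

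The main obstacle lies in the region $\{\, x\in\gO(\gd_1)\mid |H(x)|\leq 2\gs \,\}$, where $\psi$ is essentially zero and the desired inequality reduces to $G(x,0)<G(0,0)+\eta$. Since this set contains all of $c_0(0)$, it is \emph{not} a Euclidean-small neighborhood of the origin, so pointwise continuity of $G(\cdot,0)$ at $0$ alone is insufficient. The remedy, as realized in \cite[Lemma 4.4]{Ku16}, is to push the corrector construction all the way down to $\{\, H=0 \,\}$: the mild blow-up $T_i(h)=O(|h|^{-n/(m+2)})$ from Lemma \ref{L: PT}(ii), combined with $n/(m+2)<1$, permits one to keep $\psi_i$ (after a suitable recentering on each orbit) $C^1$-bounded up to $c_0(0)$, so that the ergodic averaging absorbs the variation of $G(\cdot,0)$ along the entire branches of $c_0(0)$ connecting $0$ to itself, rather than relying on pointwise continuity.
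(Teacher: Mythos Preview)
The paper does not give its own proof of this lemma; it simply refers to \cite[Lemma 4.4]{Ku16}. Your write-up goes further than the paper by outlining the construction, and in the end you too invoke \cite[Lemma 4.4]{Ku16} for the decisive step, so you and the paper land in the same place.

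That said, what you have written is not a self-contained proof. Steps 1--3 (local correctors from Lemma~\ref{L: exist1}, glued by a partition of unity in the $H$-variable, then cut off near the separatrix) are correct as far as they go, and you diagnose the failure accurately: on $\{|H|\le 2\gs\}$ the inequality collapses to $G(x,0)<G(0,0)+\eta$, which need not hold since that region contains all of $c_0(0)$, not merely a small ball about the origin. But the remedy you describe in the final paragraph---pushing the corrector $C^1$ across $c_0(0)$---is only asserted, not carried out. In particular, the bound $T_i(h)=O(|h|^{-n/(m+2)})$ by itself does not keep the naive corrector bounded: its $L^\infty$ norm along an orbit is a priori of order $T_i(h)$, which diverges as $h\to 0$. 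One must exploit that the Hamiltonian flow dwells near the origin precisely where the integrand $G(X(s,\cdot),0)-\ol G_i(h,0)$ is small (via (G2) and Lemma~\ref{L: G0}), together with the quantitative structure of (H3), to obtain a corrector with uniform $C^1$ control up to and across the separatrix. That analysis is exactly the content of \cite[Lemma 4.4]{Ku16}, to which both you and the paper ultimately defer.
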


\begin{proof}[Proof of Theorem \ref{T: Cl2}] We set $d=d(u_0,\ldots,u_{N-1})$, and note 
by the maximality of $(u_0,\ldots,u_{N-1})$, Corollary \ref{C: max3}, and Theorem \ref{T: Cl1}  that 
\[
v^-(x)\leq v^+(x)=d(v_0^+)=\cdots=d(v_{N-1}^+)\leq d \ \ \text{ for all } x\in c_0(0).
\]
It remains to show that
\begin{equation} \label{E: Cl21}
v^-(x) \geq d \ \ \  \text{ for all } x \in c_0(0).
\end{equation}

To prove \eqref{E: Cl21}, we argue by contradiction, and suppose that $\min_{c_0(0)}v^- < d$. We set $\gk:=\min_{c_0(0)} v^-$.

For any $i\in \cI_0$, we have 
\[
\gl u_i(h)+\min_{q\in\R}\ol G_i(h,q)\leq 0 \ \ \text{ for all } h\in J_i,
\]
and, hence, by Lemma \ref{L: G0}, 
\begin{equation} \label{E: gk}
\gl \gk+\lim_{J_i\ni h\to 0}\ol G_i(h,0)=\gl\gk+G(0,0)<0. 
\end{equation}
Combining this and the equi-continuity (see (ii) of Lemma \ref{L: PG}) of $q\mapsto \ol G_i(h,q)$, with $h\in J_i$, 
we deduce that there exists $\gd>0$ such that for all $i\in\cI_0$ and $h\in [-\gd,\,\gd]\cap J_i$, 
\[
\gl(\gk+\gd^2)+\ol G_i(h,\gd_i)<-\gd \ \ \text{ and } \ \ u_i(h)\geq \gk+\gd^2,
\]
where $\gd_i=\gd$ if $i=0$ and $=-\gd$ otherwise, which implies that 
for all $i\in\cI_0$ and $h\in [-\gd,\,\gd]\cap J_i$, 
\begin{equation}\label{E: exist-gd}
\gl (\gk+\gd_ih) + \ol G_i(h, \gd_i) < -\gd \ \ \  \text{ and } \ \ \  u_i(h) \geq \gk + \gd_ih
\end{equation}
By \eqref{E: gk}, we may assume as well that 
\[
\gl\gk +G(0,0)<-\gd. 
\]

According to Lemma \ref{L: key}, we may choose, after replacing $\gd>0$ by a smaller number if necessary, a function $\psi\in C(\ol\gO(\gd))$ such that
\[
-b\cdot D\psi+G(x,0)<G(0,0)+\gd \ \ \IN \gO(\gd).
\]
This yields 
\begin{equation} \label{E: psi}
\gl\gk-b\cdot D\psi+G(x,0)<0 \ \ \IN \gO(\gd). 
\end{equation}

For each $i \in \cI_0$, we define the function $w_i$ on $\bar J_i$ by
\begin{equation*}
w_i(h) =
\begin{cases}
\gd_ih + \gk \ \ \  &\For h \in \bar J_i \cap [-\gd, \, \gd], \\
u_i(h) - u_i(\gd_i) + \gd^2 + \gk \ \ \  &\For h \in \bar J_i \setminus [-\gd, \, \gd].
\end{cases}
\end{equation*}
By Lemma \ref{L: PS1},  the function $u_i$ is locally Lipschitz continuous in $\bar J_i\setminus\{0\}$ and, hence,
$w_i$ is Lipschitz continuous on $\bar J_i$.
Moreover, thanks to the convexity of $\ol G_i(h,q)$ in $q$, i.e.,  (iii) of Lemmas \ref{L: PG}, the function $w_i$ is a viscosity subsolution of (HJ$_i$) and (BC$_i$).
Note that $v_i^-$ is a viscosity supersolution of (HJ$_i$) and (BC$_i$) and
satisfies $\liminf_{J_i \ni h \to 0} v_i^-(h) \geq \gk$.
Hence, by applying Lemma \ref{L: limCP}, we obtain
\begin{equation*}
w_i(h) \leq v_i^-(h) \ \ \  \text{ for all } h \in J_i \text{ and } i \in \cI_0.
\end{equation*}
Fix any $\mu \in (0, \, \gd^2)$. The inequality above allows us to choose $\ep_0>0$ so that for any $\ep\in(0,\,\ep_0)$ and $u\in\cS_\ep$, 
\begin{equation}\label{E: boundary-ineq}
\gd^2 + \gk - \mu < u_*(x) \ \ \  \text{ for all } x \in \pl\gO(\gd).
\end{equation}

We next choose a constant $a \in (\gk, \, \gd^2 + \gk - \mu)$, define the function $z^\ep$ on $\ol\gO(\gd)$ by 
\[
z^\ep(x)=a+\ep \psi(x),
\]
and compute by \eqref{E: psi} that for any $x\in\gO(\gd)$,
\[\bald
\gl z^\ep(x)&-\frac 1\ep\, b(x)\cdot Dz^\ep(x) +G(x,Dz^\ep(x))
\\&=\gl (a-\gk)+\gl \ep \psi(x)+G(x,\ep D\psi(x))-G(x,0).  
\eald\]
Reselecting $\ep_0>0$ small enough if needed, 
we see that for any $\ep\in(0,\,\ep_0)$, 
the function $z^\ep$ is a viscosity subsolution of \eqref{E: epHJ} 
in $\gO(\gd)$. Moreover, we may assume that for any 
$\ep\in(0,\,\ep_0)$, 
\[
z^\ep(x)\leq \gd^2+\gk-\mu \ \ \ON \ol\gO(\gd). 
\]
Hence, by the comparison principle for \eqref{E: epHJ} on $\ol\gO(\gd)$, we get
\begin{equation*}
z^\ep(x) \leq u_*(x) \ \ \  \text{ for all } u\in\cS_\ep\, \text{ and }\, x \in \ol\gO(\gd),
\end{equation*}
which yields a contradiction:  
\begin{equation*}
\gk<a \leq v^-(x) \ \ \  \text{ for all } x \in c_0(0).
\end{equation*}
This completes the proof. 
\end{proof}

\begin{bibdiv}
\begin{biblist}
\bib{AT15}{article}{
   author={Achdou, Yves},
   author={Tchou, Nicoletta},
   title={Hamilton-Jacobi equations on networks as limits of singularly
   perturbed problems in optimal control: dimension reduction},
   journal={Comm. Partial Differential Equations},
   volume={40},
   date={2015},
   number={4},
   pages={652--693},
   issn={0360-5302},
   review={\MR{3299352}},
   doi={10.1080/03605302.2014.974764},
}

\bib{ACCT13}{article}{
   author={Achdou, Yves},
   author={Camilli, Fabio},
   author={Cutr\`\i , Alessandra},
   author={Tchou, Nicoletta},
   title={Hamilton-Jacobi equations constrained on networks},
   journal={NoDEA Nonlinear Differential Equations Appl.},
   volume={20},
   date={2013},
   number={3},
   pages={413--445},
   issn={1021-9722},
   review={\MR{3057137}},
   doi={10.1007/s00030-012-0158-1},
}

\bib{CIL92}{article}{
   author={Crandall, Michael G.},
   author={Ishii, Hitoshi},
   author={Lions, Pierre-Louis},
   title={User's guide to viscosity solutions of second order partial
   differential equations},
   journal={Bull. Amer. Math. Soc. (N.S.)},
   volume={27},
   date={1992},
   number={1},
   pages={1--67},
   issn={0273-0979},
   review={\MR{1118699 (92j:35050)}},
   doi={10.1090/S0273-0979-1992-00266-5},
}

\bib{CrLi83}{article}{
   author={Crandall, Michael G.},
   author={Lions, Pierre-Louis},
   title={Viscosity solutions of Hamilton-Jacobi equations},
   journal={Trans. Amer. Math. Soc.},
   volume={277},
   date={1983},
   number={1},
   pages={1--42},
   issn={0002-9947},
   review={\MR{690039}},
   doi={10.2307/1999343},
}

\bib{E89}{article}{
   author={Evans, Lawrence C.},
   title={The perturbed test function method for viscosity solutions of
   nonlinear PDE},
   journal={Proc. Roy. Soc. Edinburgh Sect. A},
   volume={111},
   date={1989},
   number={3-4},
   pages={359--375},
   issn={0308-2105},
   review={\MR{1007533 (91c:35017)}},
   doi={10.1017/S0308210500018631},
}

\bib{FW94}{article}{
   author={Freidlin, Mark I.},
   author={Wentzell, Alexander D.},
   title={Random perturbations of Hamiltonian systems},
   journal={Mem. Amer. Math. Soc.},
   volume={109},
   date={1994},
   number={523},
   pages={viii+82},
   issn={0065-9266},
   review={\MR{1201269 (94j:35064)}},
   doi={10.1090/memo/0523},
}

\bib{GIM15}{article}{
   author={Galise, Giulio},
   author={Imbert, Cyril},
   author={Monneau, R\'{e}gis},
   title={A junction condition by specified homogenization and application
   to traffic lights},
   journal={Anal. PDE},
   volume={8},
   date={2015},
   number={8},
   pages={1891--1929},
   issn={2157-5045},
   review={\MR{3441209}},
   doi={10.2140/apde.2015.8.1891},
}

\bib{IM17}{article}{
   author={Imbert, Cyril},
   author={Monneau, R\'{e}gis},
   title={Flux-limited solutions for quasi-convex Hamilton-Jacobi equations
   on networks},
   language={English, with English and French summaries},
   journal={Ann. Sci. \'{E}c. Norm. Sup\'{e}r. (4)},
   volume={50},
   date={2017},
   number={2},
   pages={357--448},
   issn={0012-9593},
   review={\MR{3621434}},
   doi={10.24033/asens.2323},
}

\bib{IMZ13}{article}{
   author={Imbert, Cyril},
   author={Monneau, R\'{e}gis},
   author={Zidani, Hasnaa},
   title={A Hamilton-Jacobi approach to junction problems and application to
   traffic flows},
   journal={ESAIM Control Optim. Calc. Var.},
   volume={19},
   date={2013},
   number={1},
   pages={129--166},
   issn={1292-8119},
   review={\MR{3023064}},
   doi={10.1051/cocv/2012002},
}

\bib{I87}{article}{
   author={Ishii, Hitoshi},
   title={Perron's method for Hamilton-Jacobi equations},
   journal={Duke Math. J.},
   volume={55},
   date={1987},
   number={2},
   pages={369--384},
   issn={0012-7094},
   review={\MR{894587}},
   doi={10.1215/S0012-7094-87-05521-9},
}
		
\bib{I89}{article}{
   author={Ishii, Hitoshi},
   title={A boundary value problem of the Dirichlet type for Hamilton-Jacobi
   equations},
   journal={Ann. Scuola Norm. Sup. Pisa Cl. Sci. (4)},
   volume={16},
   date={1989},
   number={1},
   pages={105--135},
   issn={0391-173X},
   review={\MR{1056130}},
}

\bib{IS12}{article}{
   author={Ishii, Hitoshi},
   author={Souganidis, Panagiotis E.},
   title={A pde approach to small stochastic perturbations of Hamiltonian
   flows},
   journal={J. Differential Equations},
   volume={252},
   date={2012},
   number={2},
   pages={1748--1775},
   issn={0022-0396},
   review={\MR{2853559}},
   doi={10.1016/j.jde.2011.08.036},
}

\bib{Ku16}{article}{
   author={Kumagai, Taiga},
   title={A perturbation problem involving singular perturbations of domains for Hamilton-Jaocbi equations},
   journal={Funkcial. Ekvac.},
   volume={61},
   date={2018}
   number={3}
   pages={377-427}
   doi={10.1619/fesi.61.377}
}

\bib{Ku17}{article}{
   author={Kumagai, Taiga},
   title={Asymptotic analysis for Hamilton-Jacobi equations with large drift terms},
   status={to appear in Adv. Calc. Var.},
   eprint={https://arxiv.org/abs/1705.01933}
}

\bib{KuTh}{article}{
   author={Kumagai, Taiga},
   title={A study of Hamilton-Jacobi equations with large Hamiltonian drift terms},
   journal={PhD thesis, Waseda University},
   date={2018}
}

\bib{LS16}{article}{
   author={Lions, Pierre-Louis},
   author={Souganidis, Panagiotis},
   title={Viscosity solutions for junctions: well posedness and stability},
   journal={Atti Accad. Naz. Lincei Rend. Lincei Mat. Appl.},
   volume={27},
   date={2016},
   number={4},
   pages={535--545},
   issn={1120-6330},
   review={\MR{3556345}},
   doi={10.4171/RLM/747},
}

\bib{LS17}{article}{
   author={Lions, Pierre-Louis},
   author={Souganidis, Panagiotis},
   title={Well-posedness for multi-dimensional junction problems with
   Kirchoff-type conditions},
   journal={Atti Accad. Naz. Lincei Rend. Lincei Mat. Appl.},
   volume={28},
   date={2017},
   number={4},
   pages={807--816},
   issn={1120-6330},
   review={\MR{3729588}},
   doi={10.4171/RLM/786},
}

\end{biblist}
\end{bibdiv}

  \renewcommand{\theequation}{A.\arabic{equation}}
 \renewcommand{\thethm}{A.\arabic{thm}}
  \setcounter{equation}{0}  
\setcounter{thm}{0}

\section*{Appendix}

\begin{prop} \label{app1}
Let $m\in\N$ be such that $m\geq 2$, $U$ an open subset of $\R^m$ 
and $E\map U\to\R^m$ a Lipschitz continuous vector field. 
Let $v\in\USC(U)$ be a viscosity subsolution of 
\[
\gl v-E\cdot Dv-f=0 \ \ \IN U,
\]
where $\gl\geq 0$ is a given constant and $f\in C(U)$ be a given function. 
Let $c,d\in\R$ be such that $c<d$ and let $X\map (c,d)\to U$ be a $C^1$-curve 
such that 
\[
\dot X(t)=E(X(t)) \ \ \text{ for all }\ t\in(c,d).  
\]
Set $w(t)=v(X(t))$ and $g(t)=f(X(t))$ for $t\in (c,\,d)$ Let \ $\gs,\tau$\ 
be real numbers such that \ $c<\gs<\tau<d$. Then 
\[
e^{-\gl\gs}w(\gs)\leq e^{-\gl \tau}w(\tau)
+\int_{\gs}^{\tau}e^{-\gl t}g(t)\,dt. 
\]
\end{prop}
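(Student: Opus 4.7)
The plan is to reduce the claim to a one-dimensional comparison by showing that $w(t)=v(X(t))$ is a viscosity subsolution of the ODE $\lambda w-\dot w - g = 0$ on $(c,d)$. Once this is established, the desired integral inequality follows from the standard fact that any $w\in\USC((c,d))$ which is a viscosity subsolution of $\lambda w-\dot w=g$ renders the map
\[
t\longmapsto e^{-\lambda t}w(t)-\int_c^t e^{-\lambda s}g(s)\,ds
\]
nondecreasing on $(c,d)$; this is the classical ODE identity, verified by a routine one-dimensional test-function argument. Rearranging the monotonicity between the endpoints $\sigma$ and $\tau$ produces exactly the claimed inequality, so the whole problem is the chain-rule step.

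To establish the chain rule, I would regularize $v$ by sup-convolution. After shrinking to a bounded open neighborhood $U'\Subset U$ of the compact arc $X([\sigma,\tau])$ on which $E$ is Lipschitz and bounded and $f$ is uniformly continuous and bounded, set
\[
v^\ep(x)=\sup_{y\in U'}\Bigl\{v(y)-\frac{|x-y|^2}{2\ep}\Bigr\}.
\]
Standard viscosity theory provides three facts on a slightly smaller open set: (i) $v^\ep$ is locally Lipschitz and semi-convex, hence differentiable almost everywhere; (ii) $v^\ep\searrow v$ pointwise as $\ep\to 0+$, by the upper semicontinuity of $v$ and since the penalty forces the maximizer $y_\ep(x)$ to satisfy $|y_\ep(x)-x|\le C\sqrt{\ep}$; and (iii) $v^\ep$ is itself a viscosity subsolution of
\[
\lambda v^\ep - E\cdot Dv^\ep - f \le \omega(\ep)\quad\text{in the shrunken set},
\]
for a modulus $\omega(\ep)\to 0$. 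Since the inequality in (iii) is linear in the gradient and $v^\ep$ is locally Lipschitz, it in fact holds pointwise almost everywhere by Rademacher's theorem together with the semiconvexity.

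Composing with the $C^1$ curve $X$, the scalar function $t\mapsto v^\ep(X(t))$ is Lipschitz, and the standard chain rule for Lipschitz compositions gives $\tfrac{d}{dt}v^\ep(X(t))=Dv^\ep(X(t))\cdot E(X(t))$ a.e.\ on $(\sigma,\tau)$. Feeding this into (iii) yields
\[
\lambda v^\ep(X(t))-\tfrac{d}{dt}v^\ep(X(t))-g(t)\le\omega(\ep)\quad\text{for a.e.\ } t\in(\sigma,\tau).
\]
Multiplying by $e^{-\lambda t}$, recognising the left side (apart from $g$) as $-\tfrac{d}{dt}[e^{-\lambda t}v^\ep(X(t))]$, and integrating from $\sigma$ to $\tau$ gives
\[
e^{-\lambda\sigma}v^\ep(X(\sigma))-e^{-\lambda\tau}v^\ep(X(\tau))\le\int_\sigma^\tau e^{-\lambda t}g(t)\,dt+\omega(\ep)(\tau-\sigma).
\]
Letting $\ep\to 0+$, using (ii) to pass $v^\ep(X(\sigma))\to w(\sigma)$ and $v^\ep(X(\tau))\to w(\tau)$, produces the asserted inequality. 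The main obstacle is establishing property (iii) with an explicit vanishing modulus; this is precisely where the Lipschitz regularity of $E$ and the uniform continuity of $f$ on the compact arc are used, through the displacement bound $|y_\ep(x)-x|\le C\sqrt{\ep}$ for the maximizer, which controls the error in replacing $E(y_\ep)$ by $E(x)$ and $f(y_\ep)$ by $f(x)$ in the natural comparison of $v^\ep$ with the unperturbed subsolution inequality for $v$.
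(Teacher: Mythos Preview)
Your regularization strategy has a genuine gap at the chain-rule step. You assert that ``the standard chain rule for Lipschitz compositions gives $\tfrac{d}{dt}v^\ep(X(t))=Dv^\ep(X(t))\cdot E(X(t))$ a.e.\ on $(\sigma,\tau)$,'' but there is no such chain rule: the image $X([\sigma,\tau])$ is Lebesgue-null in $\R^m$ (recall $m\ge 2$), so Rademacher's theorem says nothing about differentiability of $v^\ep$ \emph{along the curve}. For instance, the sup-convolution of $|x_1|$ is $v^\ep(x)=|x_1|+\ep/2$, which is Lipschitz and semiconvex, yet with $E(x)=(x_1,1)$ the integral curve $X(t)=(0,t)$ hits no point where $Dv^\ep$ exists; and one checks that $|x_1|$ is indeed a viscosity subsolution of $-E\cdot Dv=0$. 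Semiconvexity does yield a chain rule along $C^1$ curves, but in terms of the \emph{sub}differential $D^-v^\ep$ (always nonempty for semiconvex functions), whereas the viscosity \emph{sub}solution inequality for $v^\ep$ constrains only the \emph{super}differential $D^+v^\ep$; the two coincide precisely at points of differentiability, so you are back to the same obstruction. A secondary issue: with maximizer $y_\ep$ one has $p=(y_\ep-x)/\ep$, and the drift error in your (iii) is bounded by $L|y_\ep-x|^2/\ep$; the displacement estimate only gives $|y_\ep-x|^2/\ep\le 2\,\mathrm{osc}(v)$, which tends to $0$ pointwise in $x$ (from $v^\ep(x)\to v(x)$) but not uniformly, so $\omega(\ep)$ is not a uniform modulus as written.

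The paper bypasses both issues with a direct variable-doubling argument to prove that $w$ is a one-dimensional viscosity subsolution of $\gl w-w'-g=0$: given $\phi\in C^1(I)$ with $w-\phi$ attaining a strict maximum at $\hat t$, it maximizes $\Phi_\ga(t,x)=v(x)-\phi(t)-\ga|x-X(t)|^2$ over a compact neighbourhood of $(\hat t,X(\hat t))$, reads off the viscosity inequality for $v$ in the $x$-variable and the first-order criticality condition in $t$, and adds them. The cross term $2\ga(x^\ga-X(t^\ga))\cdot(E(x^\ga)-E(X(t^\ga)))$ is controlled by the Lipschitz constant of $E$ times $2\ga|x^\ga-X(t^\ga)|^2\to 0$. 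Once the one-dimensional subsolution property is established, the integral inequality follows by comparison with the explicit ODE solution $z(t)=e^{\gl t}\bigl(e^{-\gl\tau}w(\tau)+\int_t^\tau e^{-\gl s}g(s)\,ds\bigr)$, made strict by adding the barrier $\ep/(t-a)$ near the left endpoint.
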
 

\def\t{\hat t} \def\x{\hat x}

\begin{proof} Set $I=(c,\,d)$. It is obvious that \ $w\in\USC(I)$. 
We show first that $w$ is 
a viscosity subsolution of  
\begin{equation}\label{A0}
\gl w-w'-g=0 \ \ \IN I.
\end{equation}

For this, let $\phi\in C^1(I)$ and assume that $w-\phi$ has a strict 
maximum at $\t\in I$.  Set $\x=X(\t)$ and choose $\gd>0$ so that 
\[
[\t-\gd,\,\t+\gd]\subset I \ \ \text{ and } \ \ \ol B_\gd(\x)\subset U.
\]
Fix any $\ga>0$ and consider the function 
\[
\Phi_\ga(t,x):=v(x)-\phi(t)-\ga|x-X(t)|^2 \ \ \ \ON K:=[\t-\gd,\,\t+\gd]\tim\ol B_\gd(\x).
\]
Let $(t^\ga,\,x^\ga)\in K$ be a maximum point of $\Phi_\ga$. It is easily seen that, as $\ga\to\infty$,
\[
(t^\ga,\,x^\ga)\ \to\ (\t,\x) \ \ \text{ and } \ \ \ga|x^\ga-X(t^\ga)|^2 \ \to \ 0.
\] 
Accordingly, by assuming $\ga$ large enough, we may assume that 
$(t^\ga,\,x^\ga)\in (\t-\gd,\,\t+\gd)\tim B_\gd(\x)$, and, 
by the viscosity property of $v$, we have
\[
\gl v(x^\ga)-E(x^\ga)\cdot 2\ga(x^\ga-X(t^\ga))-f(x^\ga)\leq 0.
\] 
Also,  since $t\mapsto \Phi_\ga(t,x^\ga)$ has a local maximum at $t^\ga$, we have
\[
-\phi'(t^\ga)-2\ga(X(t^\ga)-x^\ga)\cdot \dot X(t^\ga)=0. 
\]
Adding these two yields
\[
\gl v(x^\ga)-\phi'(t^\ga)-2\ga(x^\ga-X(t^\ga))\cdot (E(x^\ga)-E(X(t^\ga)))-f(x^\ga)\leq 0.
\]
Hence, letting $C$ be the Lipschitz constant of the function $E$, we obtain
\[
\gl v(x^\ga)-\phi'(t^\ga)-2\ga C|x^\ga-X(t^\ga)|^2-f(x^\ga)\leq 0.
\]
Sending $\ga\to\infty$ in the above, we get \ $\gl v(X(\t))-\phi'(\t)-f(X(\t))\leq 0$, 
and conclude that $w$ satisfies \eqref{A0} in the viscosity sense.

To complete the proof, we fix any $\tau\in I$. The function 
\[
z(t):=e^{\gl t}\left(e^{-\gl\tau}w(\tau)+\int_t^\tau e^{-\gl s}g(s)\,ds\right) 
\] 
is a classical solution of \eqref{A0} and satisfies the condition that $z(\tau)=w(\tau)$. 
Fix any $\gs\in(c,\,\tau)$, choose $a\in(c,\,\gs)$, and, for $\ep>0$,  set 
\[
\chi_\ep(t)=\frac{\ep}{t-a} \ \ \text{ for } t\in(a,\,\tau]. 
\]
The function $\gz_\ep:=z+\chi_\ep$ on $(a,\,\tau]$ satisfies in the classical sense
\[
\gl \gz_\ep-\gz_\ep'-g>0 \ \ \IN (a,\,\tau) \ \ \text{ and } \ \ \gz_\ep(\tau)>w(\tau). 
\]
If $w-\gz_\ep$ has a maximum at some point in $(a,\,\tau)$, then 
the first inequality above yields a contradiction. On the other hand, since 
$\lim_{t\to a+}(w-\gz_\ep)(t)=-\infty$ and $(w-\gz_\ep)(\tau)<0$, 
the function $w-\gz_\ep$ has a maximum 
at a point $t_0\in(a,\,\tau]$ and, moreover, $t_0=\tau$, which implies that 
\[
(w-\gz_\ep)(t)\leq (w-\gz_\ep)(\tau)<0 \ \ \text{ for all } t\in(a,\,\tau].
\] 
Sending $\ep\to 0$, we see that 
\[
w(\gs)\leq z(\gs)=e^{\gl\gs}\left(e^{-\gl\tau}w(\tau)+\int_\gs^\tau e^{-\gl s}F(s)\,ds\right), 
\]
which finishes the proof. 
\end{proof}

\bye